\newtheorem{Theorem}{Theorem}[section]
\newtheorem{Lemma}[Theorem]{Lemma}
\newtheorem{example}{Example}[section]
\newtheorem{Definition}{Definition}[section]
\newcommand{\R}{\mathbb{R}}
\newcommand{\V}{\mathbf{v}}
\newcommand{\U}{\mathbf{u}}
\newcommand{\F}{\mathbf{f}}
\newcommand{\G}{\mathbf{g}}
\newcommand{\W}{\mathbf{w}}
\newcommand{\Z}{\mathbf{z}}
\numberwithin{equation}{section}
\numberwithin{figure}{section}
\begin{document}
	
	\title[]{Lotka-Volterra Competition Models on Finite Graphs}
	
\author[Y. Hu]{Yuanyang Hu$^1$}
\author[C. Lei]{Chengxia Lei$^2$}
\thanks{$^1$ School of Mathematics and Statistics,
	Henan University, Kaifeng, Henan 475004, P. R. China.}
\thanks{$^2$School of Mathematics and Statistics, Jiangsu Normal University,
	Xuzhou, 221116, Jiangsu Province, China.}

\thanks{{\bf Emails:} {\sf yuanyhu@mail.ustc.edu.cn} (Y. Hu).}
\thanks{{\bf Emails:} {\sf leichengxia001@163.com(C. Lei)}}
\thanks{Y. Hu was partially supported by NSF of China (No. 12201184), National Natural Science Foundation of
	He' nan Province of China (No. 222300420416) and China Postdoctoral Science Foundation (No. 2022M711045).}
\thanks{C. Lei was partially supported by NSF of China (No. 11801232, 11971454, 12271486), the NSF of Jiangsu Province (No. BK20180999) and the Foundation of Jiangsu Normal University (No. 17XLR008).}
\date{\today}	
	\keywords{ Lotka-volterra competition system, discrete Laplacian, comparison principles, long-time dynamics}
	
	\begin{abstract}
		In this paper, we study three two competing species Lotka-Volterra competition models on finite connected graphs, with Dirichlet, Neumann or no boundary conditions. We get that when time goes to infinity,  either one specie extincts while the other becomes surviving or both competing species coexist, which depend crucially on the strengh of species' competitiveness and the size of the initial population under the Neumann boundary condition and the condition that there is no boundary condition. One of our results partially answer a question posed by Slav\'{\i}k in [SIAM J. Appl. Dyn. Syst., 19 (2020)]. The critical techiniques in the proof of our main results are upper and lower solutions method, which are developed for weakly coupled parabolic systems on finite graphs in this article.
	\end{abstract}
	
	\keywords{Lotka-Volterra competition system, discrete Laplacian, comparison principles, long-time dynamics }

	\maketitle
	
	\section{Introduction}
Recently, increasing efforts have been devoted to the analysis of the Lotka-Volterra models on graphs, see, for example, \cite{S, S2, SC} and the references therein.
Suppose there are finite discrete patches $P=\{x_1,x_2,\cdots,x_n\}$ and the species can migrate between patches $x_i$ and $x_j$. In \cite{S},
Slav\'ik studied the following Lotka-Volterra competition model on a finite graph 
\begin{equation*}\label{20220904-2}
\left\{\begin{array}{lll}
u_{t}=d_{1}\sum\limits_{y \in P}(u(y,t)-u(x,t))+u(a_1-b_1u-c_1 v), &x \in P,&t>0, \\ 
v_{t}=d_{2}\sum\limits_{y \in P}(v(y,t)-v(x,t))+v(a_2-b_2u-c_2 v), &x\in P,&t>0,\\
u=u_{0}(x)\ge 0,~v=v_{0}(x) \ge 0, &x \in P,&t=0,
\end{array}\right.
\end{equation*}
where $a_{i},$ $b_{i}$, $c_{i}$ $>0$ and $d_i\ge 0$, $i=1,2$ are constants. They described the spatially homogeneous stationary states and their stability, discussed the
existence and number of spatially heterogeneous stationary states, and studied the asymptotic behavior
of solutions. In section 6 of \cite{S}, they raised an open problem as follows. For the generalized model 
 \begin{equation}\label{a}
 	\left\{\begin{array}{lll}
 		u_{t}=\sum\limits_{y \in P}(u(y,t)-u(x,t))d^{1}_{xy}+u(a_1-b_1u-c_1 v), &x \in P,&t>0, \\ 
 		v_{t}=\sum\limits_{y \in P}(v(y,t)-v(x,t))d^{2}_{xy}+v(a_2-b_2u-c_2 v), &x\in P,&t>0,\\
 		u=u_{0}(x)\ge 0,~v=v_{0}(x) \ge 0, &x \in P,&t=0,
 	\end{array}\right.
 \end{equation}
where the diffusion coefficients $d^{i}_{xy}\ge 0$ for all $x,y\in P$, $i=1,2$, it is unclear how to generalize Theorem 4.5 in \cite{S} to the problem \eqref{a}. We partially answer the question under the condition that $d^{i}_{xy}> 0$ for all $x,y\in P$, $i=1,2$. Furthermore, we give more results (see Theorem \ref{1,4} below).	
	
 We first introduce some concepts of graph theory that are often used in this paper.  A graph $G=G(V,E)$ we mean a finite set $V$ of vertices with a set $E$ of two-element subsets of $V$ (whose elements are called edges). We write $x\sim y$ ($x$ is adjacent to $y$) if $(x,y)\in E$. A graph is called simple if it has neither loops or multiple edges. A finite sequence $\{x_{k} \}_{k=0}^{n}$ of vertices on a graph is called a path if $x_{k}\sim x_{k+1}$ for all $k=0,1,\dots,n-1$. A graph $G=G(V,E)$ is said connected if, for any two vertices $x,y\in V$, there exists a path connecting $x$ and $y$, that is, a path $\{x_{k}\}_{k=0}^{n}$ such that $x_0=x$ and $x_n= y$. Throught this paper, all the graphs in our concern are assumed to connected. We define the weight function $\omega:~V\times V\to [0,+\infty) $ satisfying $\omega_{x y}=\omega_{yx }$, $x,y\in V$ and $\omega_{x y}=0$ if and only if $(x,y)\not\in E$. Let $\mu(x):V\to \mathbb{R}^{+}$ be a positive measure in the graph $G=(V,E)$. For a subgraph $\Omega$ of a graph $G=G({V},{E})$, the (vertex) boundary $\partial{\Omega}$ of $\Omega$ is the set of all vertices $z\in {V}$ not in $\Omega$ but adjacent to some vertex in $\Omega$, i.e., 
\begin{equation*}
	\partial{\Omega}:=\{z\in {V} \backslash \Omega | z\sim y ~\text{for some}~y\in \Omega \}.
\end{equation*} 
Denote $\bar{\Omega}$ a graph whose vertices and edges are in $\Omega$ and vertices in $\partial{\Omega}$.  Throught this paper, we always assume that ${\partial\Omega \not=\emptyset}$. The $\omega-$Laplacian $\Delta_{{V}}$ of a function $u:V\to\mathbb{R}$ on a graph $G=(V,E)$ is defined by 
\begin{equation*}
	\Delta_{\omega} u(x)=\Delta_{{V}} u(x):=\sum_{y \in V} (u(y)-u(x))\frac{\omega_{y x}}{\mu(x)},~x\in V,
\end{equation*}
and the $\omega-$Laplacian $\Delta_{\Omega}$ of $u$ on a subgraph $\Omega$ of $G$ is defined by 
\begin{equation}\label{}
	\Delta_{\omega} u(x)=\Delta_{\Omega} u(x):=\sum_{y \in \bar{\Omega}} (u(y)-u(x))\frac{\omega_{y x}}{\mu(x)},~x\in V.
\end{equation}
For more details of graph theoretic notions, we refer to \cite{CB} and \cite{HL}. 

Let $G=G(V,E)$ and $\bar{ \Omega}$ be finite weighted graphs. Let $\{{\omega}^{k}\}_{k=1}^{2}$ and $\{\tilde{\omega}^{k}\}_{k=1}^{2}$ be weight functions on $G$ and $\bar{\Omega}$ respectively. In this paper, we consider the following problems
	\begin{equation}\label{41}
	\left\{\begin{array}{lll}
	u_{t}-d_{1}\Delta_{V_1} u=u(a_1-b_1u-c_1 v), &x \in V, &t>0,\\ 
	v_{t}-d_{2}\Delta_{V_2} v=v(a_2-b_2u-c_2 v), &x \in V,& t>0,\\
	u=\tilde{u}_{0} ,~v=\tilde{v}_{0} , &x \in V,&t=0
	\end{array}\right.
	\end{equation}	
 and
 \begin{equation}\label{20220904-3}
\left\{\begin{array}{lll}
u_{t}-d_{1}\Delta_{\Omega_1} u=u(a_1-b_1u-c_1 v), & x \in \Omega, & t>0, \\
v_{t}-d_{2}\Delta_{\Omega_2} v=v(a_2-b_2u-c_2 v), & x \in \Omega, & t>0,\\
 {B_1}u={B_2}v=0, & x \in \partial \Omega, & t> 0,\\
u=u_{0} ,~v=v_{0} , &x \in {\bar{\Omega}}, &t=0,
\end{array}\right.
\end{equation}
Here $u,v$ represent the densities of two competing species, $a_i, b_i, c_i$ $(i=1,2)$ are positive constants, the initial functions $\tilde{u}_0$, $\tilde{v}_0$ satisfy $\tilde{u}_{0}\ge 0$, $\tilde{v}_{0}\ge 0$ on $V$, and $u_0$, $v_0$ satisfy $u_0\ge 0$, $v_0\ge 0 $ on $\bar{ \Omega}$,
 $$\Delta_{V_1} u(x):=\sum\limits_{y \in V} (u(y,t)-u(x,t))\frac{\omega^{1}_{y x}}{\tilde{\mu}^{1}(x)},~x\in V,$$
$$\Delta_{V_2} u(x):=\sum\limits_{y \in V} (u(y,t)-u(x,t))\frac{\omega^{2}_{y x}}{\tilde{\mu}^{2}(x)},~x\in V,$$
$$\Delta_{\Omega_1} u(x):=\sum\limits_{y \in \bar{ \Omega}} (u(y,t)-u(x,t))\frac{\tilde{\omega}^{1}_{y x}}{\mu^{1}(x)},~x\in \Omega,$$
$$\Delta_{\Omega_2} u(x):=\sum\limits_{y \in \bar{ \Omega}} (u(y,t)-u(x,t))\frac{\tilde{\omega}^{2}_{y x}}{\mu^{2}(x)},~x\in \Omega,$$
$\tilde{\mu}^{i}$ $(i=1,2)$ and ${\mu}^{i}$ $(i=1,2)$ are finite positive measures on $V$ and $\bar{ \Omega}$, respectively, and the operators $B_{i} (i=1,2)$  satisfy 
\begin{equation}\label{1,4}
	\begin{cases}
	{B_1}u(x,t)=\frac{\partial u}{\partial_{\Omega_1} n}(x):=\sum\limits_{y\in {\Omega}} (u(x,t)-u(y,t))\frac{\tilde{\omega}^{1}_{xy}}{{\mu}^{1}(x)}~\text{on}~\partial\Omega\times(0,+\infty),\\  {B_2}v(x,t)=\frac{\partial v}{\partial_{\Omega_2} n}(x):=\sum\limits_{y\in {\Omega}} (v(x,t)-v(y,t))\frac{\tilde{\omega}^{2}_{xy}}{{\mu}^{2}(x)}~\text{on}~\partial\Omega\times(0,+\infty),
	\end{cases}
\end{equation}
or
\begin{equation}\label{1,5}
	B_{1}u\equiv u=0,~B_{2}v\equiv v=0~\text{on}~\partial\Omega\times(0,+\infty).
\end{equation}

	The steady-state problem to \eqref{41} and \eqref{20220904-3} are 
	\begin{equation}\label{s}
		\left\{\begin{array}{lll}
			-d_{1}\Delta_{V_{1}} u=u(a_1-b_1u-c_1 v), & x \in V,  \\
			-d_{2}\Delta_{V_{2}} v=v(a_2-b_2u-c_2 v), & x \in V
		.
		\end{array}\right.
	\end{equation}
	and
	\begin{equation}\label{ss}
	\left\{\begin{array}{lll}
	-d_{1}\Delta_{\Omega_{1}} u=u(a_1-b_1u-c_1 v), & x \in \Omega,  \\
	-d_{2}\Delta_{\Omega_{2}} v=v(a_2-b_2u-c_2 v), & x \in \Omega,  \\
	{B_1} u={B_2} v=0, & x \in \partial \Omega,
	\end{array}\right.
	\end{equation} respectively.
Clearly, the problems \eqref{41} and \eqref{20220904-3} has four constant steady-state solutions: $(0,0)$, $(\frac{a_1}{b_1},0)$, $(0,\frac{a_2}{c_2})$, $(\xi, \eta)$, where 
\begin{equation}\label{20220924}
\xi =\frac{a_1 c_2- a_2 c_1}{\triangle}, \eta =\frac{a_2 b_1- a_1 b_2}{\triangle}, \text{when}~ \triangle=b_1 c_2- b_2 c_1\not = 0.
\end{equation}

	We are now ready to describe the main results of this paper. 
	\begin{Theorem}\label{1.1}
		Let $\bar{ \Omega}$ be a finite graph. Then for any given functions $u_0 \ge 0$, $v_0 \ge 0$ on $\bar{ \Omega}$, \eqref{20220904-3} admits a unique solution $(u,v)$ defined for all $t>0$. Furthermore, under the condition that $u_0 \not \equiv 0 $ and $v_0 \not \equiv 0$, we have
		\begin{description}
			\item[(i)] if $B_{i}$ $(i=1,2)$ satisfy \eqref{1,4}, then $u(x,t)>0$ and $v(x,t)>0$ for $x\in \bar{ \Omega}$ and $t>0$;
				\item[(ii)] if $B_{i}$ $(i=1,2)$ satisfy \eqref{1,5}, then $u(x,t)>0$ and $v(x,t)>0$ for $x\in \Omega$ and $t>0$.
		\end{description} 
	\end{Theorem}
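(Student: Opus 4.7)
The proof will unfold in three stages, each building on the previous: I would first reduce \eqref{20220904-3} to a finite-dimensional ODE system to obtain local existence and uniqueness, then upgrade to global existence using nonnegativity and scalar logistic comparison, and finally establish strict positivity through a strong minimum principle on the connected graph $\Omega$.

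Since $\bar\Omega$ has finitely many vertices, $(u,v)$ is a finite-dimensional vector. Under \eqref{1,5} the boundary components vanish identically, while under \eqref{1,4} the relation $\sum_{y\in\Omega}(u(x)-u(y))\tilde\omega^1_{xy}=0$ solves algebraically for each boundary value as a weighted average of interior values (the diagonal coefficient $\sum_y \tilde\omega^1_{xy}$ is strictly positive since, by definition, $x\in\partial\Omega$ has a neighbor in $\Omega$). In either case the reduced system has polynomial right-hand side, hence is locally Lipschitz, and Picard--Lindel\"of yields a unique $C^1$ solution on a maximal interval $[0,T_{\max})$. To obtain nonnegativity, I would check that at any first instant $t_*$ where a coordinate $u(x_0,t_*)=0$ (with all others still $\ge 0$),
\[
u_t(x_0,t_*)=d_1\sum_{y}u(y,t_*)\frac{\tilde\omega^1_{yx_0}}{\mu^1(x_0)}\ge 0,
\]
so the vector field is inward on the boundary of the nonnegative orthant; a standard Nagumo/tangency argument promotes this to invariance, and the same holds for $v$. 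With $u,v\ge 0$, comparison against the scalar logistic ODE $\bar u'=\bar u(a_1-b_1\bar u)$, $\bar u(0)=\max_x u_0(x)$, via the parabolic comparison principle for weakly coupled graph systems developed in the paper, gives $u\le\max\{\bar u(0),a_1/b_1\}$, and analogously for $v$. These a priori bounds preclude finite-time blow-up, hence $T_{\max}=\infty$.

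For strict positivity, choose $M$ with $u,v\le M$ and set $L:=(b_1+c_1)M$, so that $u_t-d_1\Delta_{\Omega_1}u\ge -Lu$ on $\Omega$. I would then establish a strong minimum principle by propagating positivity along edges. Fix $x^*$ with $u_0(x^*)>0$: the inequality $u_t(x^*,\cdot)\ge -Lu(x^*,\cdot)$ forces $u(x^*,t)\ge u_0(x^*)e^{-Lt}>0$ for every $t\ge 0$; for any $y\in\Omega$ adjacent to $x^*$, whenever $u(y,t_0)=0$, the $x^*$-contribution in the graph Laplacian yields
\[
u_t(y,t_0)\ge d_1\,u(x^*,t_0)\frac{\tilde\omega^1_{x^*y}}{\mu^1(y)}>0,
\]
so $u(y,\cdot)$ leaves zero instantly and, by the same exponential lower bound, stays positive thereafter. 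Iterating along a path from $x^*$ in the connected graph $\Omega$ (finitely many steps suffice) yields $u(x,t)>0$ on $\Omega\times(0,\infty)$, and the symmetric argument handles $v$; this settles case (ii). In case (i), \eqref{1,4} then realizes each boundary value $u(x,t)$, $x\in\partial\Omega$, as a strictly positive weighted average of positive interior values, extending positivity to all of $\bar\Omega$.

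The main technical obstacle is the strong minimum principle step, i.e.\ ensuring that the positivity set of $u$ eventually covers all of $\Omega$ rather than stalling on a proper subset. The crucial observation is that positivity, once achieved at a vertex, is preserved in time by the exponential lower bound inherited from $u_t\ge -Lu$; this monotonicity of the positive set lets the propagation proceed inductively along a spanning tree rooted at $x^*$ without any need to shrink the time interval at later stages, and in particular the finiteness of $\mathrm{diam}(\Omega)$ guarantees the argument terminates.
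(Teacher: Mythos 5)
Your proof is essentially correct, but it follows a genuinely different route from the paper's. The paper proves Theorem \ref{1.1} by exhibiting the constant pair $(M_1,M_2)=(\max\{a_1/b_1,\max_{\bar\Omega}u_0\},\max\{\max_{\bar\Omega}v_0,a_2/c_2\})$ and $(0,0)$ as coupled upper and lower solutions for the quasimonotone nonincreasing nonlinearity, and then invokes the monotone iteration scheme of Theorem \ref{sx} to get existence, uniqueness (within the order interval) and the a priori bounds in one stroke; strict positivity is then obtained from the systems maximum principle (Theorem \ref{bj}), which in turn cites the strong maximum principle of \cite{HL}. You instead reduce the problem to a finite-dimensional ODE system (eliminating the boundary unknowns algebraically in the Neumann case, which is legitimate since the diagonal weight $\sum_{y\in\Omega}\tilde\omega^1_{xy}$ is positive for $x\in\partial\Omega$), get local existence and \emph{unqualified} uniqueness from Picard--Lindel\"of, prove invariance of the nonnegative orthant by a tangency argument, bound the solution by the spatially constant logistic supersolution to rule out blow-up, and prove strict positivity by hand, propagating it edge by edge along a path in the connected graph. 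Your route is more elementary and self-contained: it does not need the monotone-iteration machinery of Section 3 for this particular theorem, and it yields uniqueness among all $C^1$ solutions rather than only within an order interval, while making the role of connectivity in the positivity statement completely explicit. The paper's route has the advantage of reusing exactly the tools (Theorems \ref{bj}, \ref{21}, \ref{sx}) that are needed anyway for Theorems \ref{m}--\ref{m2}.

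Two small points to tighten. First, in the positivity step you pass from $u_t-d_1\Delta_{\Omega_1}u\ge -Lu$ to $u_t(x^*,\cdot)\ge -Lu(x^*,\cdot)$; this requires absorbing the Laplacian as well, i.e.\ using $\Delta_{\Omega_1}u(x^*)\ge -u(x^*)\sum_{y}\tilde\omega^1_{yx^*}/\mu^1(x^*)$ (valid since $u\ge0$) and enlarging $L$ by $d_1\max_{x}\sum_y\tilde\omega^1_{yx}/\mu^1(x)$; with that correction the exponential lower bound and the subsequent propagation argument go through. Second, your propagation is carried out along paths inside $\Omega$, which is what conclusion (ii) needs, but it silently uses that $\Omega$ itself (not merely $\bar\Omega$) is connected; this matches the paper's blanket connectivity convention, but it is worth stating, since for the Dirichlet problem positivity genuinely cannot cross a boundary vertex.
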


	\begin{Theorem}\label{m}
Let $\bar{ \Omega}$ be a finite graph. Suppose that the initial functions $u_{0},$ $v_{0}\not \equiv 0$.	Let $(u,v)$ be the unique solution to \eqref{20220904-3} with ${B_i}$ $(i=1,2)$ satisfying \eqref{1,4}. Then we have
	\begin{description}
		\item[(i)] If $\frac{a_1}{a_2}< \frac{b_1}{b_2}$ , $\frac{a_1}{a_2}< \frac{c_1}{c_2}$, then $\lim\limits_{t\to \infty} (u(x,t),v(x,t))=(0,\frac{a_2}{c_2})$;
		\item[(ii)] 	
		If $\frac{b_1}{b_2}<\frac{a_1}{a_2},$ $\frac{c_1}{c_2}<\frac{a_1}{a_2}$, then $\lim\limits_{t\to \infty} (u(x,t),v(x,t))=(\frac{a_1}{b_1},0)$ ;
		\item[(iii)] 	
		If $\frac{c_{1}}{c_{2}}<\frac{a_{1}}{a_{2}}<\frac{b_{1}}{b_{2}}$, then
		$\lim\limits_{t\to +\infty}\left( u(x,t),v(x,t) \right) =(\xi,\eta)$;
		\item[(iv)] Assume that $\frac{b_{1}}{b_{2}}<\frac{a_{1}}{a_{2}}<\frac{c_{1}}{c_{2}}$. Then $\lim\limits _{t \rightarrow \infty}(u(x, t), v(x, t))=\left(\frac{a_{1}}{b_{1}}, 0\right)$ provided that $\xi<u_{0}(x)<\frac{a_{1}}{b_{1}}$ and $0<v_{0}(x)<\eta$ on $\bar{ \Omega}$;
		$\lim\limits _{t \rightarrow \infty}(u(x, t), v(x, t))=\left(0,\frac{a_{2}}{c_{2}} \right)$ provided that $0<u_0<\xi$ and $\eta<v_0(x)< \frac{a_2}{c_2}$ on $\bar{ \Omega}$.  
		\end{description} 
Here, $\xi$ and $\eta$ are defined by \eqref{20220924}.
\end{Theorem}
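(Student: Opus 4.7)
The plan is to deploy the upper-lower-solution / system-comparison machinery developed earlier in the paper, exploiting that under the Neumann-type condition \eqref{1,4} every spatial constant $c$ satisfies $\Delta_{\Omega_i} c = 0$ and $B_i c = 0$. Hence any solution $(U(t), V(t))$ of the kinetic ODE
$$U'=U(a_1-b_1U-c_1V),\qquad V'=V(a_2-b_2U-c_2V)$$
provides a spatially constant super/sub-solution of \eqref{20220904-3}. As a preliminary I would compare each equation with the logistic graph-equation (dropping the non-positive competition term) to obtain, using Theorem~\ref{1.1}(i) for positivity,
$$\limsup_{t\to\infty} u(x,t)\le a_1/b_1,\qquad \limsup_{t\to\infty} v(x,t)\le a_2/c_2\qquad (x\in\bar\Omega).$$

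For cases (i)--(iii) I would run the classical constant competitive monotone iteration. Initialize $\bar M_0=a_1/b_1$, $\bar N_0=a_2/c_2$, $\underline M_0=\underline N_0=0$, and set
$$\bar M_{k+1}=\left(\frac{a_1-c_1\underline N_k}{b_1}\right)_+,\quad \bar N_{k+1}=\left(\frac{a_2-b_2\underline M_k}{c_2}\right)_+,$$
$$\underline M_{k+1}=\left(\frac{a_1-c_1\bar N_k}{b_1}\right)_+,\quad \underline N_{k+1}=\left(\frac{a_2-b_2\bar M_k}{c_2}\right)_+.$$
An induction on $k$ --- at each step replacing $\bar M_k,\bar N_k$ by $\bar M_k+\epsilon,\bar N_k+\epsilon$, applying scalar comparison on the graph from a sufficiently late time, then letting $\epsilon\to 0$ --- gives $\underline M_k\le \liminf_t u(x,t)\le \limsup_t u(x,t)\le \bar M_k$ uniformly in $x$, and likewise for $v$. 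The four bounded monotone sequences converge to limits solving the associated non-negative fixed-point system. Under the sign constraints of case (i), $\xi<0$ forces the unique non-negative solution $(0,a_2/c_2,0,a_2/c_2)$; case (ii) is symmetric. In case (iii), the weak-competition condition $\triangle>0$ makes the system uniquely solvable by $(\xi,\eta,\xi,\eta)$, yielding coexistence.

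Case (iv) (bistable) is the main obstacle: the constant iteration above stalls, since both semi-trivial equilibria are stable for the kinetic ODE, so the specific initial data must enter the argument. For the first sub-case I let $(\bar U(t),\underline V(t))$ and $(\underline U(t),\bar V(t))$ solve the kinetic ODE with initial data $(\max_{\bar\Omega}u_0,\min_{\bar\Omega}v_0)$ and $(\min_{\bar\Omega}u_0,\max_{\bar\Omega}v_0)$, respectively. The hypothesis $\xi<u_0<a_1/b_1$ and $0<v_0<\eta$ places both initial points in the rectangle $R=(\xi,a_1/b_1]\times[0,\eta)$. A direct sign check of $a_1-b_1U-c_1V$ and $a_2-b_2U-c_2V$ along the four edges of $R$ shows that $R$ is forward invariant under the bistable ODE flow; since $(a_1/b_1,0)$ is the only equilibrium in $\bar R$, both trajectories converge to it by Poincar\'e--Bendixson. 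The time-dependent spatial constants $(\bar U(t),\underline V(t))$ and $(\underline U(t),\bar V(t))$ are then a super/sub-solution pair for \eqref{20220904-3} in the quasi-monotone non-increasing sense, so the system comparison principle sandwiches $(u(x,t),v(x,t))$ between them, forcing $(u,v)\to(a_1/b_1,0)$. The second sub-case, landing in the basin of $(0,a_2/c_2)$, is identical up to the symmetric exchange of roles.
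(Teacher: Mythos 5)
Your engine---spatially constant super/sub-solutions, exploiting that constants annihilate $\Delta_{\Omega_i}$ and $B_i$ under \eqref{1,4}---is the same one the paper runs, but you drive it differently. For (i)--(iii) the paper writes down explicit exponential barriers with hand-tuned rates $\sigma,\beta,q$ and verifies the coupled upper/lower-solution inequalities of Definition \ref{3} directly, whereas you run the classical constant monotone iteration and identify the limits from the fixed-point system. That route is workable (it trades the paper's explicit formulas for an induction plus a limit identification), with one imprecision: in case (i) you say ``$\xi<0$ forces the trivial solution,'' but the hypotheses of (i) do not determine the sign of $\triangle=b_1c_2-b_2c_1$, so $\xi$ need not be negative. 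The correct elimination is that a positive limit $\bar M$ would force $\triangle\,\bar M=a_1c_2-a_2c_1<0$ and $\underline N=(a_2b_1-a_1b_2)/\triangle$, and whichever sign $\triangle$ has, one of $\bar M>0$, $\underline N>0$ fails. Also, to launch the positive lower iterates you need $\min_{\bar\Omega}u(\cdot,t_0)>0$, which is where Theorem \ref{1.1}(i) and $u_0\not\equiv0$ must enter (the paper does this through the constants $\sigma$ in \eqref{1,}).

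The genuine gap is in case (iv). Your rectangle $\bar R=[\xi,a_1/b_1]\times[0,\eta]$ contains the coexistence equilibrium $(\xi,\eta)$ at its top-left corner, so the claim that $(a_1/b_1,0)$ is the only equilibrium in $\bar R$ is false, and Poincar\'e--Bendixson does not by itself deliver convergence to $(a_1/b_1,0)$: you must also rule out closed orbits and heteroclinic cycles in $\bar R$ (standard for planar competitive systems, but unstated), and, more seriously, rule out trajectories of $R$ converging to the saddle $(\xi,\eta)$. That last point needs a real argument, e.g.: the stable eigendirection of the Jacobian at $(\xi,\eta)$ has components of equal sign (one checks that $-b_1\xi$ lies strictly between the two eigenvalues), so any orbit on the stable manifold eventually satisfies $\mathrm{sgn}(U-\xi)=\mathrm{sgn}(V-\eta)$, which is incompatible with remaining in $R$ where $U-\xi>0>V-\eta$. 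The paper avoids all of this by working not in the rectangle but in the wedge $R_1$ between the two nullclines, where $f_1>0$ and $f_2<0$ have definite signs; it joins a point $P\in R_1$ dominating the initial data to a point $Q\in R_1$ near $(a_1/b_1,0)$ by a straight segment and uses the exponential parametrization $(p(t),q(t))$ of $\overline{PQ}$ as the barrier, so no phase-plane analysis is required. Either supply the saddle-exclusion and no-periodic-orbit arguments, or replace your rectangle by the paper's $R_1$, in which case your case (iv) collapses to the paper's proof of Theorem \ref{4.4}.
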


By \cite{C}, the following eigenvalue problem 
\begin{equation}\label{451}
	\begin{cases}
		-\Delta_{\Omega_{i}} u=\lambda u~\text{in}~\Omega,\\
		u=0~\text{on}~\partial{\Omega}.
	\end{cases}
\end{equation}
admits a smallest eigenvalue $\lambda_{0,i}>0$ $(i=1,2)$.

By \cite[Theorem 5.5]{HL}, we know that if $a_{i}>\lambda_{0,i}d_{i}$, then the following elliptic problem
\begin{equation}\label{20220904-1}
\begin{cases}
-d_{i}\Delta_{\Omega_{i}} s_i= s_i\left(a_i-E_i s_i \right) ~\text{ in }~\Omega,\\
s_{i}=0~\text{ on }~\partial{\Omega}\\
\end{cases}
\end{equation}
 admits a unique positive solution $s_{i}(x)$ ($i=1,2$), where $E_1=b_1$ and $E_2=c_2.$

For convenience, we denote $\mathcal{M}=\{\tilde{\omega}^{1},\tilde{\omega}^{2},\tilde{\mu}^{1},\tilde{\mu}^{2}\}$.	
	\begin{Theorem}\label{m1}
Let $\bar{ \Omega}$ be a finite graph. Assume that $(u,v)$ is the solution to \eqref{20220904-3} with ${B}_{i}$ satisfying \eqref{1,5}. Then we have the following:
	\begin{description}
		\item[(i)]
		 If $a_1>\lambda_{0,1}d_1$, $a_2\leq\lambda_{0,2}d_2$, then $\lim\limits_{t\to \infty} (u(x,t),v(x,t))=(s_1(x),0)$ provided that $u_0\not\equiv0$, where $s_1$ is the unique positive solution of \eqref{20220904-1} with $i=1$;
	\item[(ii)]
	If $a_1\leq\lambda_{0,1}d_1$, $a_2>\lambda_{0,2}d_2$, then $\lim\limits_{t\to \infty} (u(x,t),v(x,t))=(0,s_2(x))$ provided that $v_0\not\equiv0$, where $s_2$ is the unique positive solution of \eqref{20220904-1} with $i=2$;
		\item[(iii)] 
			Assume that $a_1>\lambda_{0,1} d_1$, $a_2>\lambda_{0,2} d_2$. If 
		\begin{equation}\label{k1}
		a_1-\lambda_{0,1} d_1>\frac{c_1}{c_2} a_2,~ a_2-\lambda_{0,2} d_2> \frac{b_2}{b_1} a_1,
		\end{equation}  $u_0\ge, \not\equiv 0$ and $v_0\ge, \not\equiv 0$, then there exist two positive solutions $\left(\bar{s}(x;\mathcal{M}),\underline{r}(x;\mathcal{M}) \right) $, $\left(\underline{s}(x;\mathcal{M}), \bar{r}(x;\mathcal{M}) \right) $ to \eqref{ss} with ${B}_{i}$ satisfying \eqref{1,5} such that $\left( u,v\right)$ satisfies 
		\begin{equation}\label{458}
		\begin{aligned}
		\underline{s}(x)	\le \liminf\limits_{t\to+\infty} u(x,t)\le \limsup\limits_{t\to +\infty} u(x,t)\le \bar{s}(x),\\
		\underline{r}(x) \le \liminf\limits_{t\to+\infty} v(x,t)\le \limsup\limits_{t\to +\infty} v(x,t)\le \bar{r}(x).\\
		\end{aligned}
		\end{equation}
		 In addition, if 
		 \begin{equation}\label{d}
		 	\tilde{\omega}^{1}\equiv \tilde{\omega}^{2},~\mu^{1}\equiv \mu^{2}~\text{on}~\bar{ \Omega},
		 \end{equation}
	 and
		\begin{equation}\label{tj}
		2 b_{1} \underline{s}(\tilde{\omega}^{1},\mu^{1})>a_{1}-\lambda_{0,1} d_{1}, \quad 2 c_{2} \underline{r}(\tilde{\omega}^{1},\mu^{1})>a_{2}-\lambda_{0,2} d_{2}~(x\in\Omega)~,
		\end{equation}
		then  $\left(\bar{s},\underline{r} \right) \equiv \left(\underline{s}, \bar{r} \right) $ on $\bar{ \Omega}$.
	\end{description} 
\end{Theorem}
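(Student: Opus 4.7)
The proof splits into three cases following the statement. Cases (i) and (ii) reduce to scalar comparisons; case (iii) requires the coupled upper--lower solution iteration for competitive systems developed earlier in the paper, combined with the parabolic comparison principle of Theorem~\ref{1.1}, while the uniqueness assertion under \eqref{d}--\eqref{tj} is handled by a discrete energy estimate and is the main obstacle.

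For (i), let $\Psi$ solve the scalar logistic Dirichlet problem $\Psi_t-d_2\Delta_{\Omega_2}\Psi=\Psi(a_2-c_2\Psi)$ with $\Psi(\cdot,0)=v_0$. Because $a_2\le\lambda_{0,2}d_2$, \cite[Theorem~5.5]{HL} and the principal-eigenvalue criterion give $\Psi(\cdot,t)\to 0$, so by scalar comparison $v(\cdot,t)\to 0$ uniformly on $\bar{\Omega}$. Fixing $\varepsilon>0$, once $v\le\varepsilon$ the $u$-component satisfies $u(a_1-c_1\varepsilon-b_1u)\le u_t-d_1\Delta_{\Omega_1}u\le u(a_1-b_1u)$; since $a_1>\lambda_{0,1}d_1$, for small $\varepsilon$ the two squeezing problems have unique positive Dirichlet equilibria $s_{1,\varepsilon}$ and $s_1$ with $s_{1,\varepsilon}\to s_1$, and I conclude $u\to s_1$ by sending $\varepsilon\to 0$. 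Case (ii) is symmetric.

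For (iii), I would first produce $(\bar{s},\underline{r})$ and $(\underline{s},\bar{r})$ as positive solutions of \eqref{ss} via an alternating monotone iteration. Set $\bar{s}^{0}\equiv a_1/b_1$ and $\bar{r}^{0}\equiv a_2/c_2$ as spatial-constant super-solutions. Hypothesis \eqref{k1} gives $a_1-c_1(a_2/c_2)>\lambda_{0,1}d_1$ and $a_2-b_2(a_1/b_1)>\lambda_{0,2}d_2$, so by \cite[Theorem~5.5]{HL} the scalar Dirichlet problems $-d_1\Delta_{\Omega_1}w=w(a_1-c_1\bar{r}^{0}-b_1w)$ and $-d_2\Delta_{\Omega_2}w=w(a_2-b_2\bar{s}^{0}-c_2w)$ have unique positive solutions, which I take as $\underline{s}^{0}$ and $\underline{r}^{0}$. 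Iterate
\begin{align*}
-d_1\Delta_{\Omega_1}\bar{s}^{n+1}&=\bar{s}^{n+1}\bigl(a_1-b_1\bar{s}^{n+1}-c_1\underline{r}^{n}\bigr),\\
-d_2\Delta_{\Omega_2}\underline{r}^{n+1}&=\underline{r}^{n+1}\bigl(a_2-b_2\bar{s}^{n}-c_2\underline{r}^{n+1}\bigr),\\
-d_1\Delta_{\Omega_1}\underline{s}^{n+1}&=\underline{s}^{n+1}\bigl(a_1-b_1\underline{s}^{n+1}-c_1\bar{r}^{n}\bigr),\\
-d_2\Delta_{\Omega_2}\bar{r}^{n+1}&=\bar{r}^{n+1}\bigl(a_2-b_2\underline{s}^{n}-c_2\bar{r}^{n+1}\bigr),
\end{align*}
all with zero Dirichlet data. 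Using the quasi-monotone-decreasing structure of competition, induction yields $\underline{s}^{n}\nearrow\underline{s}$, $\bar{s}^{n}\searrow\bar{s}$, $\underline{r}^{n}\nearrow\underline{r}$, $\bar{r}^{n}\searrow\bar{r}$, with the limits positive and $(\bar{s},\underline{r}),(\underline{s},\bar{r})$ solving \eqref{ss}; since $\bar{\Omega}$ is finite, the passage to the limit is immediate. For \eqref{458}, exploit that the parabolic flow is monotone in the coupled sense ($\uparrow u_0,\downarrow v_0\Rightarrow\uparrow u(\cdot,t),\downarrow v(\cdot,t)$), which is the form of the comparison principle from Theorem~\ref{1.1} adapted to competition: the parabolic solutions starting from $(\bar{s}^{0},\underline{r}^{0})$ and $(\underline{s}^{0},\bar{r}^{0})$ converge to $(\bar{s},\underline{r})$ and $(\underline{s},\bar{r})$ respectively, and bracketing $(u,v)$ between them gives \eqref{458}.

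The \emph{main obstacle} is the uniqueness assertion. Write $\Delta:=\Delta_{\Omega_1}=\Delta_{\Omega_2}$, $\mu:=\mu^{1}=\mu^{2}$, $W_1:=\bar{s}-\underline{s}\ge 0$, $W_2:=\bar{r}-\underline{r}\ge 0$. Subtracting the steady-state equations and using $\bar{s}\underline{r}-\underline{s}\bar{r}=\underline{r}W_1-\underline{s}W_2$ gives
\begin{equation*}
-d_1\Delta W_1=\bigl[a_1-b_1(\bar{s}+\underline{s})-c_1\underline{r}\bigr]W_1+c_1\underline{s}W_2,
\end{equation*}
together with a symmetric equation for $W_2$ whose diagonal coefficient is $a_2-c_2(\bar{r}+\underline{r})-b_2\underline{s}$ and whose off-diagonal term is $b_2\underline{r}W_1$. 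Testing each equation against itself and invoking the Dirichlet Poincar\'e inequality $\sum_{\Omega}\phi(-\Delta\phi)\mu\ge\lambda_{0,i}\sum_{\Omega}\phi^{2}\mu$ leads, after adding the two scalar bounds, to
\begin{equation*}
\sum_{\Omega}A_1 W_1^{2}\mu+\sum_{\Omega}A_2 W_2^{2}\mu\le\sum_{\Omega}(c_1\underline{s}+b_2\underline{r})W_1W_2\,\mu,
\end{equation*}
where $A_1:=b_1(\bar{s}+\underline{s})+c_1\underline{r}-a_1+\lambda_{0,1}d_1$ and $A_2:=c_2(\bar{r}+\underline{r})+b_2\underline{s}-a_2+\lambda_{0,2}d_2$. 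Hypothesis \eqref{tj} is precisely what makes $A_1,A_2>0$ pointwise on $\Omega$, and \eqref{d} is what allows the two inequalities to be added under a common measure. The technical crux is absorbing the cross term by a weighted AM--GM splitting whose weights are calibrated so that \eqref{tj} is exactly the sharp threshold preventing the resulting quadratic form from degenerating; this will force $W_1\equiv W_2\equiv 0$, and hence $(\bar{s},\underline{r})\equiv(\underline{s},\bar{r})$ on $\bar{\Omega}$.
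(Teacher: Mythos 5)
Your cases (i) and (ii) follow the paper's route exactly (compare $v$ with the scalar logistic Dirichlet flow, then squeeze $u$ between the unperturbed logistic problem and the $\epsilon$-perturbed one with rate $a_1-c_1\epsilon$, and pass $\epsilon\to0$ via $s_{1,\epsilon}\to s_1$). For the existence and bracketing part of (iii) you use an alternating elliptic iteration, whereas the paper runs the parabolic flow from the ordered initial data $\left((1+\epsilon)s_1,\delta\phi_2\right)$ and $\left(\delta\phi_1,(1+\epsilon)s_2\right)$ and uses monotonicity in $t$; both are viable, but the paper's choice of lower data $\delta\phi_i$ with $\delta$ small is precisely what guarantees $\underline{U}\le u(\cdot,T_2)$ and $\underline{V}\le v(\cdot,T_2)$, which is needed to place $(u,v)$ inside the order interval and deduce \eqref{458}. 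Your $\underline{s}^{0},\underline{r}^{0}$ are fixed positive functions and you never argue that the true solution eventually enters the interval they determine; this needs the extra scaling-down step.

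The genuine gap is the uniqueness step under \eqref{d} and \eqref{tj}. Testing each equation of \eqref{469} against itself and invoking Poincar\'e leaves you needing positive semidefiniteness of $A_1W_1^2+A_2W_2^2-(c_1\underline{s}+b_2\underline{r})W_1W_2$. Hypothesis \eqref{tj} only gives $A_1>c_1\underline{r}$ and $A_2>b_2\underline{s}$ (with a margin that can be arbitrarily small), so the best available product bound is essentially $A_1A_2\approx c_1b_2\,\underline{r}\,\underline{s}$, whereas semidefiniteness requires $4A_1A_2\ge(c_1\underline{s}+b_2\underline{r})^2$, and by AM--GM $(c_1\underline{s}+b_2\underline{r})^2\ge4c_1b_2\,\underline{s}\,\underline{r}$ with equality only when $c_1\underline{s}=b_2\underline{r}$. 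For generic $\underline{s},\underline{r}$ your form is therefore indefinite, and no calibration of AM--GM weights can absorb the cross term: the cancellation has to be exact, not approximate. The paper achieves this by testing \eqref{469} against the common principal eigenfunction $\phi$ (legitimate since $S$, $R$, $\phi$ all vanish on $\partial\Omega$), which converts $-d_i\Delta$ into multiplication by $\lambda_0d_i$, and then taking the combination $b_2\cdot(\text{first identity})+c_1\cdot(\text{second identity})$; with these weights the two off-diagonal terms $b_2c_1\underline{s}R\phi$ and $c_1b_2\underline{r}S\phi$ cancel identically, leaving
\begin{equation*}
b_{2}\int_{\Omega}\left[\lambda_{0}d_{1}-a_{1}+b_{1}(\bar{s}+\underline{s})\right]S\phi\,d\mu+c_{1}\int_{\Omega}\left[\lambda_{0}d_{2}-a_{2}+c_{2}(\bar{r}+\underline{r})\right]R\phi\,d\mu=0,
\end{equation*}
and \eqref{tj} together with $\bar{s}+\underline{s}\ge2\underline{s}$, $\bar{r}+\underline{r}\ge2\underline{r}$, $S,R\ge0$ and $\phi>0$ in $\Omega$ forces $S\equiv R\equiv0$. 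You should replace your energy estimate by this eigenfunction test; as written, the "technical crux" you defer is exactly the point where the argument fails.
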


\begin{Theorem}\label{1a}
	Let $G=(V,E)$ be a finite graph. Then for any given functions $\tilde{u}_0 \ge 0$, $\tilde{v}_0 \ge 0$ on $V$, \eqref{41} admits a unique solution $(u^{*},v^{*})$ defined for all $t>0$. Furthermore, if $\tilde{u}_0\ge, \not \equiv 0 $ and $\tilde{v}_0\ge, \not \equiv 0$, then $u^{*}(x,t)>0$ and $v^{*}(x,t)>0$ for $x\in V$ and $t>0$.
\end{Theorem}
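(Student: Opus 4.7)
The plan is to parallel the proof of Theorem \ref{1.1}, with the simplification that there is no boundary, so the whole graph $V$ plays the role of $\bar{\Omega}$. Since $V$ is a finite set, \eqref{41} is a finite-dimensional system of ODEs whose right-hand side is a polynomial (hence locally Lipschitz) function of $(u,v)$. Standard ODE theory (Picard--Lindelöf) therefore yields a unique solution $(u^{*},v^{*})$ on a maximal interval $[0,T_{\max})$. The task is then to promote this to a global solution and to establish the stated positivity.

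For global existence, I would first derive a priori bounds by the upper/lower solutions method that the authors have already set up for weakly coupled systems on graphs. The pair $(0,0)$ is clearly a lower solution for the initial value problem. For an upper solution, I would take constants $(M_{1},M_{2})$ with $M_{1}\ge\max\{\max_{V}\tilde{u}_{0},\,a_{1}/b_{1}\}$ and $M_{2}\ge\max\{\max_{V}\tilde{v}_{0},\,a_{2}/c_{2}\}$; then $\partial_{t}M_{i}-d_{i}\Delta_{V_{i}}M_{i}=0$ while $M_{1}(a_{1}-b_{1}M_{1}-c_{1}v)\le 0$ and $M_{2}(a_{2}-b_{2}u-c_{2}M_{2})\le 0$ whenever $0\le u\le M_{1}$, $0\le v\le M_{2}$, so the system is quasimonotone nonincreasing in each off-diagonal variable and the coupled comparison argument applies. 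Consequently $0\le u^{*}\le M_{1}$, $0\le v^{*}\le M_{2}$ on $[0,T_{\max})\times V$, ruling out blow-up and giving $T_{\max}=\infty$.

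For the strict positivity statement, I would fix a $T>0$ and treat $u^{*}$ as solving the linear equation
\begin{equation*}
u^{*}_{t}(x,t)=d_{1}\Delta_{V_{1}}u^{*}(x,t)+q(x,t)u^{*}(x,t),\qquad x\in V,\;t\in(0,T],
\end{equation*}
with $q(x,t):=a_{1}-b_{1}u^{*}(x,t)-c_{1}v^{*}(x,t)$ bounded on $V\times[0,T]$ by the previous step. Writing the Laplacian as $\Delta_{V_{1}}u^{*}(x)=\sum_{y\in V}\tfrac{\omega^{1}_{yx}}{\tilde{\mu}^{1}(x)}u^{*}(y)-\kappa(x)u^{*}(x)$ and absorbing $\kappa(x)$ into $q$, an integrating factor reduces the equation to
\begin{equation*}
\bigl(u^{*}(x,t)e^{-\int_{0}^{t}[q(x,s)-d_{1}\kappa(x)]\,ds}\bigr)_{t}=e^{-\int_{0}^{t}[q(x,s)-d_{1}\kappa(x)]\,ds}\sum_{y\in V}d_{1}\tfrac{\omega^{1}_{yx}}{\tilde{\mu}^{1}(x)}u^{*}(y,t),
\end{equation*}
so the nonnegativity of $u^{*}$ already established gives a Duhamel representation whose integrand is nonnegative. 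Let $S(t):=\{x\in V:u^{*}(x,t)>0\}$. Then $S(0)\neq\emptyset$ because $\tilde{u}_{0}\not\equiv 0$, and the identity shows that for any $t>0$ every vertex $x$ adjacent to some $y\in S(\tau)$ for $\tau\in(0,t)$ lies in $S(t)$, while vertices in $S(0)$ remain in $S(t)$ by continuity. Because $G$ is connected, iterating this nearest-neighbor propagation finitely many times yields $S(t)=V$ for every $t>0$. The same argument applied to $v^{*}$ closes the proof.

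The main obstacle I anticipate is making the comparison principle for the coupled competitive system rigorous in Step 2; once the quasimonotone formulation and the authors' upper/lower solutions framework are in place, the ODE character of the problem on a finite vertex set makes the remaining arguments essentially mechanical.
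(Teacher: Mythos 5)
Your proposal is correct, and it diverges from the paper's route in two places worth noting. The paper's own proof is very short and runs entirely through the machinery of Sections 2--3: it takes exactly the same pair of constant coupled upper and lower solutions, $(M_3,M_4)=\bigl(\max\{\tfrac{a_1}{b_1},\max_V\tilde u_0\},\max\{\tfrac{a_2}{c_2},\max_V\tilde v_0\}\bigr)$ and $(0,0)$, and invokes Theorem \ref{sx2} (the monotone iteration scheme on $V$) to obtain, in one stroke, global existence, uniqueness in the order interval, and the a priori bounds; strict positivity is then obtained by citing Theorem \ref{1.4}, whose proof in turn rests on Theorem 3.7 of \cite{HL}. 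You use the same barrier pair, but you get existence and uniqueness from Picard--Lindel\"of (legitimate, since \eqref{41} on a finite vertex set is a polynomial ODE system) and use the order interval only to exclude blow-up; to make that step airtight you should add the one sentence that the ODE solution coincides, by uniqueness, with the solution that Theorem \ref{sx2} produces inside $\langle(0,0),(M_1,M_2)\rangle$, which is what forces $(u^*,v^*)$ to stay in the rectangle up to $T_{\max}$. Your positivity argument is a self-contained proof of the strong maximum principle on a connected finite graph: splitting $\Delta_{V_1}u^*(x)=\sum_y \tfrac{\omega^1_{yx}}{\tilde\mu^1(x)}u^*(y)-\kappa(x)u^*(x)$, the integrating factor makes $t\mapsto u^*(x,t)e^{-\int_0^t(q-d_1\kappa)}$ nondecreasing with a nonnegative source, so positivity at one vertex persists and propagates to neighbours for \emph{all} $t>0$, and connectedness finishes it. This is precisely the content of the result the paper imports from \cite{HL}, so your route is more elementary and self-contained at the cost of length; both arguments are sound.
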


	\begin{Theorem}\label{m2}
Let $G=G(V,E)$ be a finite graph.	Let $(u,v)$ be the unique solution to \eqref{41} with nonnegative initial value $u_{0},\;v_{0}\not \equiv 0$. Then we have the following conclusions:
		\begin{description}
			\item[(i)]  If $\frac{a_1}{a_2}< \frac{b_1}{b_2}$, $\frac{a_1}{a_2}< \frac{c_1}{c_2}$, then $\lim\limits_{t\to \infty} (u(x,t),v(x,t))=(0,\frac{a_2}{c_2})$;
			\item[(ii)] 	
			If $\frac{b_1}{b_2}<\frac{a_1}{a_2},$ $\frac{c_1}{c_2}<\frac{a_1}{a_2}$, then $\lim\limits_{t\to \infty} (u(x,t),v(x,t))=(\frac{a_1}{b_1},0)$;
			\item[(iii)] 
			If $\frac{c_{1}}{c_{2}}<\frac{a_{1}}{a_{2}}<\frac{b_{1}}{b_{2}}$, then $\lim\limits_{t\to +\infty}\left( u(x,t),v(x,t) \right) =(\xi,\eta)$;
			\item[(iv)] Assume that $\frac{b_{1}}{b_{2}}<\frac{a_{1}}{a_{2}}<\frac{c_{1}}{c_{2}}$. Then $\lim\limits _{t \rightarrow \infty}(u(x, t), v(x, t))=\left(\frac{a_{1}}{b_{1}}, 0\right)$ provided that  $\xi<\tilde{u}_{0}(x)<\frac{a_{1}}{b_{1}}$ and $0<\tilde{v}_{0}(x)<\eta$ on $V$;
			$\lim\limits _{t \rightarrow \infty}(u(x, t), v(x, t))=\left(0,\frac{a_{2}}{c_{2}} \right)$ provided that $0<\tilde{u}_0<\xi$ and $\eta<\tilde{v}_0(x)< \frac{a_2}{c_2}$ on $V$.  
		\end{description} 
	Here, $\xi$ and $\eta$ are defined by \eqref{20220924}.
	\end{Theorem}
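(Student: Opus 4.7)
The strategy is to adapt the proof of Theorem \ref{m} to the whole-graph setting; the two arguments run in parallel because on a finite connected graph $G=(V,E)$ without boundary, any spatially constant function $c$ satisfies $\Delta_{V_i}c\equiv 0$, so constant solutions of the associated kinetic ODE
\[
U'(t)=U(a_1-b_1U-c_1V),\qquad V'(t)=V(a_2-b_2U-c_2V)
\]
are admissible spatially uniform upper or lower solutions of \eqref{41} whenever the initial ordering holds. By Theorem \ref{1a} the solution $(u,v)$ exists for all $t>0$ and is strictly positive on $V$, so the monotone iteration below can be carried out without worrying about loss of positivity.

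The core of the argument is a monotone iteration. Starting from the scalar logistic bounds $\limsup_{t\to\infty}u(x,t)\le a_1/b_1$ and $\limsup_{t\to\infty}v(x,t)\le a_2/c_2$ (obtained by dropping the cross terms and comparing with spatially constant solutions of the scalar logistic equation), I would iteratively define constants $\bar U_n,\underline U_n,\bar V_n,\underline V_n$ by
\[
\bar U_{n+1}=\frac{a_1-c_1\underline V_n}{b_1},\qquad \underline U_{n+1}=\max\!\Big\{0,\frac{a_1-c_1\bar V_n}{b_1}\Big\},
\]
\[
\bar V_{n+1}=\frac{a_2-b_2\underline U_n}{c_2},\qquad \underline V_{n+1}=\max\!\Big\{0,\frac{a_2-b_2\bar U_n}{c_2}\Big\},
\]
with $\bar U_0=a_1/b_1$, $\bar V_0=a_2/c_2$, $\underline U_0=\underline V_0=0$. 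The comparison principle for weakly coupled parabolic systems on $V$ developed earlier in this paper then forces, uniformly in $x\in V$,
\[
\underline U_n\le\liminf_{t\to\infty}u(x,t)\le\limsup_{t\to\infty}u(x,t)\le\bar U_n,
\]
and analogously for $v$. Any positive fixed point of the untruncated recursion must equal the coexistence state $(\xi,\eta)$ of \eqref{20220924}. Cases (i)--(iii) then follow by direct inspection: in (i) the sign conditions force the positive-part truncation to activate and drive $\bar U_n\downarrow 0$ and $\underline V_n\uparrow a_2/c_2$; (ii) is the symmetric mirror image; in (iii) the hypothesis $c_1/c_2<a_1/a_2<b_1/b_2$ implies $\triangle=b_1c_2-b_2c_1>0$ and makes the linearised recursion about $(\xi,\eta)$ a strict contraction, so $(\bar U_n,\bar V_n)\downarrow(\xi,\eta)$ and $(\underline U_n,\underline V_n)\uparrow(\xi,\eta)$.

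The main obstacle is case (iv), the bistable regime $b_1/b_2<a_1/a_2<c_1/c_2$, where $\triangle<0$ and the linearised recursion about $(\xi,\eta)$ is expanding; the crude constant iteration therefore cannot distinguish between the two stable equilibria $(a_1/b_1,0)$ and $(0,a_2/c_2)$, and the outcome genuinely depends on the initial data. Assuming $\xi<\tilde u_0<a_1/b_1$ and $0<\tilde v_0<\eta$ on $V$, I would let $(U^+(t),V^-(t))$ solve the kinetic ODE with data $(\max_V\tilde u_0,\min_V\tilde v_0)$ and $(U^-(t),V^+(t))$ with data $(\min_V\tilde u_0,\max_V\tilde v_0)$. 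Because the reaction is competitive (mixed quasi-monotone), the comparison principle for weakly coupled parabolic systems yields the envelope $U^-(t)\le u(x,t)\le U^+(t)$, $V^-(t)\le v(x,t)\le V^+(t)$ for all $x\in V$ and $t>0$. A standard planar phase-plane analysis of the Lotka--Volterra ODE in the bistable regime shows that every trajectory starting in the rectangle $(\xi,a_1/b_1)\times(0,\eta)$ converges to $(a_1/b_1,0)$; squeezing $(u,v)$ between the two envelopes yields the first half of (iv), and the second half follows by exchanging the roles of $u$ and $v$.
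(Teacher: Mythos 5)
Your proposal is correct in substance but follows a genuinely different route from the paper. For parts (i)--(iii) the paper does not iterate: after the same preliminary logistic bounds $\limsup u\le a_1/b_1$, $\limsup v\le a_2/c_2$, it constructs in one shot a pair of explicit, spatially constant, exponentially relaxing coupled upper/lower solutions (e.g.\ $\bar u=(a_1+\epsilon)e^{-\beta(t-t_0)}/b_1$, $\underline v=(a_2-(a_2-\sigma)e^{-\beta(t-t_0)})/c_2$, with carefully tuned $\sigma,\beta,q$), verifies the differential inequalities by hand, and squeezes via Theorem \ref{212}; your scheme instead iterates the constant bounds $\bar U_{n+1}=(a_1-c_1\underline V_n)/b_1$, etc. Both rest on the same comparison machinery (Theorems \ref{212} and \ref{sx2}) applied to spatially homogeneous comparison functions, so your approach is admissible; it is shorter and avoids the page of explicit inequality-checking, at the cost of outsourcing two facts that you should not leave implicit. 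First, the convergence of the truncated iteration in case (i) splits according to the sign of $\triangle$: when $\triangle>0$ the affine map is a contraction with fixed point $\xi<0$, but when $\triangle<0$ it is expanding with fixed point $\xi>0$, and you must check $a_1/b_1<\xi$ there so that the decreasing sequence escapes below zero rather than stalling; this is a short computation from $a_1b_2<a_2b_1$ but it is the crux of why the truncation ``activates.'' Second, for (iv) your observation that the coupled pair $(U^+,V^-)$, $(U^-,V^+)$ decouples into two genuine planar Lotka--Volterra trajectories is exactly right and is cleaner than the paper's construction, which instead parametrizes an explicit segment $\overline{PQ}$ in the region between the nullclines and builds $p(t),q(t)$ as exponential interpolants (Theorem \ref{4.4} transplanted via Theorem \ref{212}); but you then invoke ``standard phase-plane analysis'' for the claim that the open rectangle $(\xi,a_1/b_1)\times(0,\eta)$ lies in the basin of $(a_1/b_1,0)$, which requires the competitive-order invariance of $\{u\ge\xi,\,v\le\eta\}$, boundedness plus absence of periodic orbits, and the fact that the stable eigendirection at the saddle $(\xi,\eta)$ has positive slope so no trajectory confined to that quadrant can lie on the stable manifold. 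These are classical (and consistent with the cited work of Pao), so the argument closes, but the paper's version is self-contained where yours defers to the ODE literature.
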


We give some common symbols in this paper. $T>0$ is always assumed to be a positive real number.
Denote $\Omega_{T}:=\Omega\times (0,T]$, $\bar{\Omega}_{T}:=\bar{\Omega}\times [0,T]$, $S_{T}:=\partial \Omega\times [0,T]$ and $V_{T}:=V\times(0,T]$.
For an interval $I\subset \mathbb{R}$ and any graphs $G=G(V,E)$, we say that a function $f: V\times I\to \mathbb{R}$ belong to $C^{n}(V\times I)$ if for each $x\in V$, the function $f(x,\cdot)$ is $n-$times differentiable in $I$ and $\frac{\partial ^{n}f}{\partial t ^{n}} (x,\cdot)$ is continuous in $I$, $n=0,1,2.$

The plan of this paper is as follows. In section 2, we establish the maximum principle for weakly coupled parabolic systems on finite graphs. In section 3, we establish the upper and lower solution methods for weakly coupled systems on finite graphs. In section 4, we prove Theorems \ref{m}, \ref{m1} and \ref{m2}. In section 5, we give some numerical results.

\section{Maximum principle for systems on graphs}	

This section is concerned with the maximum principle for functions satisfying a system of parabolic inequalities on finite graphs. 
 	
Let $m>0$ be an integer. We introduce a set of $m$ functions $u_{1}(x,t),~u_{2}(x,t),\cdots,u_{m}(x,t)$, which will be treated as $m$-vector $\mathbf{u}(x,t)$.

\begin{Definition}
For two vectors $\U=(u_1,\cdots, u_m)$ and $\V=(v_1,\cdots, v_m)$,	we say $\U \le \V$ if $u_{i} \le v_{i}$ and $\U<\V$ if $u_{i} < v_{i}$  for $i=1,2,\cdots, m,$ and we write $\U \in <\W,\Z>$ if $w_{i}\le u_{i} \le z_{i}$ for $i=1,\cdots,m.$
\end{Definition}

Let $\bar{ \Omega}$ be a finite graph. Let $\omega^{k}:\bar{ \Omega}\times \bar{ \Omega}\to \mathbb{R}$ be weight functions and $\mu^{k}:\bar{ \Omega}\to \mathbb{R}^{+}$ be finite measure on $\bar{ \Omega}$ for $k=1,\cdots,m.$ 
For any function $u$ on $\bar{ \Omega}$, we define $m$ Laplace operators on $\bar{ \Omega}$ by 
\begin{equation}
	\Delta_{\Omega_{k}} u(x):=\sum_{y \in \bar{\Omega}} (u(y)-u(x))\frac{\omega^{k}_{y x}}{\mu^{k}(x)},~x\in \Omega,~k=1,\cdots,m.
\end{equation}
Given $m$ operators  \begin{equation}\label{pk}
	P_{k}=\partial_{t}-d_{k}\Delta_{\Omega_{k}},
\end{equation} where $d_{k}> 0$ are constants for all $k\in\{1,\cdots,m\}$.

\begin{Lemma}\label{L1}
Let $T>0$.	Assume that $h_{kl}(x,t)$ is bounded on $\Omega_{T}$ and $h_{kl}\le 0$ in $\Omega_{T}$ for $k\not=l$, $k,l=1,\cdots,m$. Suppose that for all $1\le k \le m,$ $u_{k}\in C^{1}(\Omega_{T})\cap C(\bar{\Omega}_{T})$ and that
	\begin{description}
		\item[(I)] $P_{k}u_{k}+\sum\limits_{l=1}^{m} h_{kl}u_{l} <0(>0)$ in $\Omega_{T}$;
		\item[(II)] $u_{k}(x,0)<(>0)$ on $\bar{\Omega}$.
	\end{description}	
If the operators $B_{k}~(k=1,\cdots,m)$ satisfy 
\begin{equation}\label{2.3}
B_{k}u_{k}(x,t)\equiv\frac{\partial u_{k}}{\partial_{\Omega_{k}} n}(x,t):=\sum_{y \in \bar{\Omega}} (u_{k}(x,t)-u_{k}(y,t))\frac{\omega^{k}_{y x}}{\mu^{k}(x)}\le 0(\ge 0)~\text{on}~S_{T},
\end{equation}  then $u_{k}(x,t)<0~ (>0)$ for $x\in \bar{\Omega}$, $t\in [0,T]$ and all $1\le k \le m$;
If the operators $B_{k}~(k=1,\cdots,m)$ satisfy \begin{equation}\label{42}
	B_{k}u_{k}\equiv u_{k}<0~(>0)~\text{on}~S_{T},
\end{equation} then
  $u_{k}(x,t)<0 (>0)$ for $x\in {\Omega}$, $t\in [0,T]$ and all $1\le k \le m$.
\end{Lemma}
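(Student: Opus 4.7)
The plan is a proof by contradiction via a discrete parabolic maximum principle. I carry out the strictly negative case; the strictly positive case follows by replacing each $u_k$ with $-u_k$, under which the off-diagonal sign hypothesis $h_{kl}\le 0$ and both boundary prescriptions for $B_k$ flip consistently. Set
\[
M(t):=\max_{1\le k\le m}\,\max_{x\in\bar\Omega} u_k(x,t),
\]
which is continuous on $[0,T]$ since each $u_k(x,\cdot)$ is continuous. Hypothesis (II) gives $M(0)<0$. If the conclusion were to fail, continuity would produce a smallest time $t^*\in(0,T]$ with $M(t^*)=0$; by minimality, $u_k(x,t)<0$ for every $k$, every $x\in\bar\Omega$, and every $t<t^*$. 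Choose $(k^*, x^*)$ attaining the maximum at $t^*$, so that $u_{k^*}(x^*, t^*) = 0$ while $u_l(x,t^*)\le 0$ for every $l$ and every $x\in\bar\Omega$.

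Next I reduce to the case $x^*\in\Omega$. In the Dirichlet regime the hypothesis $u_k<0$ on $S_T$ excludes $x^*\in\partial\Omega$ outright. In the Neumann regime, suppose $x^*\in\partial\Omega$; then
\[
B_{k^*}u_{k^*}(x^*, t^*) = \sum_{y\in\bar\Omega}\bigl(0 - u_{k^*}(y,t^*)\bigr)\frac{\omega^{k^*}_{yx^*}}{\mu^{k^*}(x^*)} \ge 0,
\]
while the hypothesis forces $B_{k^*}u_{k^*}(x^*, t^*)\le 0$. Hence this quantity vanishes, so $u_{k^*}(y, t^*)=0$ at every $y$ with $\omega^{k^*}_{yx^*}>0$. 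By the definition of $\partial\Omega$, at least one such $y$ lies in $\Omega$, and I replace $x^*$ by that vertex; it still attains the maximum $0$ at time $t^*$.

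With $x^*\in\Omega$ now in hand, the parabolic inequality (I) applies at $(x^*, t^*)$. Since $u_{k^*}(x^*, t)<0=u_{k^*}(x^*, t^*)$ for $t<t^*$, passing to the left difference quotient in $t$ yields $\partial_t u_{k^*}(x^*, t^*)\ge 0$. The discrete Laplacian satisfies
\[
\Delta_{\Omega_{k^*}} u_{k^*}(x^*, t^*) = \sum_{y\in\bar\Omega} u_{k^*}(y, t^*)\frac{\omega^{k^*}_{yx^*}}{\mu^{k^*}(x^*)} \le 0,
\]
because $u_{k^*}(x^*, t^*)=0$ and every other $u_{k^*}(y,t^*)\le 0$; hence $P_{k^*}u_{k^*}(x^*, t^*)\ge 0$. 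In the coupling sum, the diagonal term vanishes, while for $l\ne k^*$ one has $h_{k^* l}(x^*,t^*)\le 0$ and $u_l(x^*, t^*)\le 0$, so $h_{k^* l}u_l\ge 0$. Adding, $P_{k^*}u_{k^*}+\sum_{l=1}^{m} h_{k^* l}u_l\ge 0$ at $(x^*, t^*)$, directly contradicting (I).

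I expect the Neumann boundary step to be the main obstacle: one must exploit the prescribed sign of $B_{k^*}u_{k^*}$ to migrate a first zero from $\partial\Omega$ into $\Omega$, where the differential inequality actually lives. This is the discrete counterpart of a Hopf-type boundary lemma, and the strictness built into (I) and (II) is precisely what lets the two-step migration combine cleanly with the nonpositivity of the discrete Laplacian at an interior maximum.
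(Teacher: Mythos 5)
Your proof is correct and follows essentially the same route as the paper's: a first-touching-time argument combined with the discrete parabolic maximum principle at an interior maximum and the sign of the outward normal difference at the boundary. The only cosmetic differences are that you dispense with the paper's preliminary substitution $u_k=v_ke^{\alpha t}$ (which is indeed unnecessary here, since the diagonal term $h_{k^*k^*}u_{k^*}$ vanishes at the touching point anyway) and that in the Neumann case you migrate the zero to an interior neighbour, whereas the paper derives the strict contradiction $0<\frac{\partial v_i}{\partial_{\Omega_i}n}\le 0$ directly at the boundary vertex.
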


\begin{proof}
	Clearly, we can find a constant $\alpha>0$ such that $0<h_{kk}+\alpha$ on $\bar{\Omega}_{T}$ for all $k$. Let $u_{k}=v_{k} e^{\alpha t}$, then by (I), we see that $$P_{k}v_{k}+\sum_{l=1}^{m} \tilde{h}_{kl} v_{l}<0~\text{in}~\Omega_{T}$$ where 
	\begin{equation}\label{x}
		\begin{cases}\tilde{h}_{k k}=h_{k k}+\alpha>0 ~\text{in}~\Omega_{T}, & k=1, \ldots, m, \\ \tilde{h}_{k l}=h_{k l} \leqslant 0~\text{in}~\Omega_{T}, & k \neq l, \quad k, l=1, \ldots, m .\end{cases}
	\end{equation}
Let $\V=(v_1,\cdots,v_{m})$. By (II), there exists $\delta>0$ so that $\V (x,t)<\mathbf{0}$, $x\in \bar{\Omega}$, $t\in [0,\delta]$, where $\mathbf{0}=(0,\cdots,0)$.

 Define $$A:=\{t: 0\le t\le T, \V(x,s)<\mathbf{0}~\text{for}~x\in \bar{\Omega},~s\in [0,t] \}.$$ Let $t_0:=sup A$; then $0<t_0 \le T$. We claim that 
 $$\V(x,t_0)<\mathbf{0} \text{ on }\bar{\Omega}.$$ Otherwise, there exists $x_0 \in \bar{\Omega}$ and some $1\le i \le m$ so that $$v_{i}(x_0, t_0)=0.$$

  If $x_0\in \Omega$, then $v_{i}$ achieves its maximum over $\bar{\Omega}\times[0,t_0]$ at $(x_0,t_0)$, and $\partial_{t} v_{i} (x_0, t_0)\ge 0,$ $\Delta_{\Omega_{i}} v_{i}(x_0, t_0)\le 0$. It follows that ${P}_{i}v_{i} \big| _{(x_0,t_0)}\ge 0$. From \eqref{x}, we deduce that 
\begin{equation}
	P_{i}v_{i}(x_0,t_0)+\sum_{l=1}^{m} \tilde{h}_{il} v_{l} (x_0,t_0)\ge 0.
\end{equation}
This is a contradiction. Thus, we know that $x_0 \in \partial \Omega$ and $v_{i}(x, t_0)<0$ for $x\in \Omega$. 

If $B_{k}~(k=1,\cdots,m)$ satisfy \eqref{42},  then $B_{i}u_{i}(x_0,t_0)=u_{i}(x_0,t_0)\equiv v_{i}(x_0,t_0)e^{\alpha t_0}<0$. This is a contradiction.

We next consider the case where $B_{k}~(k=1,\cdots,m)$ satisfy \eqref{2.3}.
By \eqref{2.3}, we see that
\begin{equation}
	0< \frac{\partial v_{i}}{\partial_{\Omega_{i}} n} \bigg|_{(x_0,t_0)} = \sum_{x\in \bar{\Omega}} [v_{i}(x_0, t_0)-v_{i}(x,t_0)]\frac{\omega^{i}_{x_0 x}}{\mu^{i}(x_0)}\le 0.
\end{equation}
This is a contradiction. 

Therefore, we deduce that $t_0= T$ and hence that $\V <\mathbf{0}$ on $\bar{\Omega}_{T}$. This implies that $\U<0$ on $\bar{\Omega}_{T}$.
\end{proof}
	
	\begin{Theorem}\label{bj}
		Assume that $h_{kl}$ is bounded in $\Omega_{T}$ and $h_{kl}\le 0$ in $\Omega_{T}$ for $k\not=l$, $k,l=1,\cdots,m$. Suppose that for all $k=1,\cdots,m$, \begin{equation}\label{2.4}
			\left\{\begin{array}{lll}
				P_{k} u_{k}+\sum\limits_{l=1}^{m} h_{k l} u_{l} &\leq 0 ~~(\geqslant 0)~~ \text { in } \Omega_{T}, \\
				u_{k}(x, 0) &\leqslant 0~~~~~(\geqslant 0)~~~~~~~~~~ \text { on } \bar{\Omega}, \\
				B_{k} u_{k} &\leqslant 0~~~~~(\geqslant 0)~~  \text { on } S_{T}.
			\end{array}\right.
		\end{equation}
Here, $B_{k}~(k=1,\cdots,m)$ satisfy \begin{equation}\label{2,3}
	B_{k}u_{k}(x,t)\equiv\frac{\partial u_{k}}{\partial_{\Omega_{k}} n}(x,t):=\sum_{y \in \bar{\Omega}} (u_{k}(x,t)-u_{k}(y,t))\frac{\omega^{k}_{y x}}{\mu^{k}(x)},
\end{equation} 
or \begin{equation} \label{2,4}
		B_{k}u_{k}(x,t)\equiv u_{k}(x,t) ~\text{on}~S_{T}.
\end{equation}
	Then we have $\U \le 0$ $(\ge 0)$ in $\bar{\Omega}_{T}$. Furthermore, suppose that some component $u_{i}$ of $\U$ satisfies  $u_{i}(x,0)\not\equiv 0$ on $\bar{\Omega}$. Then if $B_{k}~(k=1,\cdots,m)$ satisfy \eqref{2,3}, then $u_{i}(x,t)<0$ on $\bar{\Omega}\times (0,T]$ for all $ k=1,\cdots,m$; If $B_{k}~(k=1,\cdots,m)$ satisfy \eqref{2,4}, then $u_{i}(x,t)<0$ on ${\Omega}\times (0,T]$ for all $ k=1,\cdots,m$.
	\end{Theorem}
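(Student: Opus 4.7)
The plan splits into two steps: obtain the weak inequality $\mathbf{u} \le \mathbf{0}$ by an $\epsilon$-perturbation that converts \eqref{2.4} into the strict hypotheses of Lemma~\ref{L1}, then upgrade to strict negativity of the distinguished component $u_i$ by reducing to a scalar parabolic inequality and using the strong positivity of a graph Metzler semigroup.

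For the weak part, exploit the boundedness of $h_{kl}$ on $\bar\Omega_T$ to choose $\alpha > 0$ with $\alpha + \sum_{l=1}^m h_{kl}(x,t) > 0$ for every $k$ and every $(x,t)$, and set $\tilde u_k := u_k - \epsilon e^{\alpha t}$. Since $e^{\alpha t}$ is constant in $x$, both $\Delta_{\Omega_k}$ and $\partial/\partial_{\Omega_k} n$ annihilate it, so
\[
P_k\tilde u_k + \sum_{l=1}^m h_{kl}\tilde u_l = \Bigl(P_k u_k + \sum_l h_{kl}u_l\Bigr) - \epsilon e^{\alpha t}\Bigl(\alpha + \sum_l h_{kl}\Bigr) < 0 \quad\text{in } \Omega_T,
\]
with $\tilde u_k(\cdot,0) \le -\epsilon < 0$ on $\bar\Omega$ and $B_k\tilde u_k \le 0$ on $S_T$, strict in the Dirichlet case. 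Lemma~\ref{L1} then forces $\tilde u_k < 0$, and sending $\epsilon \to 0^+$ yields $u_k \le 0$ on $\bar\Omega_T$ for every $k$.

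For the strict part, fix $i$ with $u_i(\cdot,0) \not\equiv 0$. Since $u_l \le 0$ (by the weak part) and $h_{il} \le 0$ for $l \ne i$, each cross term $h_{il}u_l$ is non-negative, so \eqref{2.4} collapses into the scalar parabolic inequality
\[
\partial_t u_i - d_i \Delta_{\Omega_i} u_i + h_{ii} u_i \le 0 \quad\text{in } \Omega_T,
\]
with $u_i(\cdot,0) \le 0$, $\not\equiv 0$ and $B_i u_i \le 0$ on $S_T$. Treating the vector $U_i(t) := (u_i(x,t))_x$ on the appropriate vertex set as evolving under a finite linear ODE inequality $\dot U_i \le \mathcal{A} U_i$, the matrix $\mathcal{A}$ is Metzler, because its off-diagonal entries $d_i\omega^i_{yx}/\mu^i(x)$ are non-negative, and it is irreducible by the connectedness of the graph. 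Hence $e^{\mathcal{A}t}$ has strictly positive entries for every $t > 0$, and the ODE comparison $U_i(t) \le e^{\mathcal{A}t} U_i(0)$, combined with $U_i(0) \le 0$ having at least one strictly negative coordinate, yields $u_i(x,t) < 0$ pointwise on the required set for every $t > 0$.

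The hardest point is the clean handling of the Neumann boundary inequality inside the ODE reformulation: unlike a true Neumann equality, it does not close the system on the boundary. One can either augment the state by the boundary vertices and absorb $B_i u_i \le 0$ as a non-positive forcing on the augmented ODE, or, more in the spirit of Lemma~\ref{L1}, run a second $\epsilon$-perturbation that promotes the weak boundary inequality to a strict one and reapplies Lemma~\ref{L1} directly to $u_i + \epsilon \phi$ for a suitably chosen positive $\phi$. Either route converts the partial strictness of the initial datum into everywhere strictness at every positive time.
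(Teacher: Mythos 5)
Your argument for the weak inequality $\U\le 0$ is the paper's argument verbatim: perturb by $\epsilon e^{\alpha t}$ with $\alpha+\sum_{l}h_{kl}>0$, note that a spatially constant function is annihilated by $\Delta_{\Omega_k}$ and by $\partial/\partial_{\Omega_k}n$, apply Lemma~\ref{L1}, and let $\epsilon\to 0^{+}$. For the strict part you also make the same first move as the paper, discarding the cross terms $h_{il}u_l\ge 0$ (for $l\ne i$, using $u_l\le 0$ and $h_{il}\le 0$) to reduce to the scalar inequality $P_iu_i+h_{ii}u_i\le 0$. The divergence is only in the last step: the paper closes by citing the scalar strong maximum principle on graphs (Theorem 3.5 of \cite{HL}), whereas you sketch a self-contained proof via an irreducible Metzler generator and strict positivity of its semigroup. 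That route is viable and more transparent, but as written it has two loose ends, both of which are exactly the content the paper outsources to \cite{HL}. First, $h_{ii}(x,t)$ is time-dependent, so there is no constant matrix $\mathcal{A}$ to exponentiate; you must first use $u_i\le 0$ and boundedness of $h_{ii}$ to bound $-h_{ii}u_i\le -Cu_i$ with $C=\sup|h_{ii}|$ (valid precisely because $u_i\le 0$), after which $d_i\Delta_{\Omega_i}-CI$ is a genuine constant irreducible Metzler matrix and the Duhamel comparison $U_i(t)\le e^{(\mathcal{A}-CI)t}U_i(0)$ applies; irreducibility also needs connectedness of the relevant vertex set, which the paper's standing assumption supplies. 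Second, the Neumann boundary vertices, which you flag as the hardest point, admit a resolution more elementary than either of your two proposed routes: condition \eqref{2,3} at $z\in\partial\Omega$ says precisely that $u_i(z,t)$ is bounded above by a convex combination (with at least one strictly positive weight, since $z$ is adjacent to $\Omega$) of the interior values $u_i(y,t)$, $y\in\Omega$; this both closes the interior ODE system on $\Omega$ alone and, once interior strict negativity is established, yields strict negativity at the boundary vertices for free. With these two repairs your proposal is a complete and essentially equivalent proof.
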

\begin{proof}
	Thanks to $h_{kl}$ is bounded, there is $\beta>0$ such that $\beta+\sum\limits_{i=1}^{m} h_{ki}>0$ in $\Omega_{T}$ for $ 1\le k \le m$. For any given $\epsilon>0$, let $v_{k}=u_{k}-\epsilon e^{\beta t}$, $k=1,\cdots,m$; then 
	\begin{equation}
		P_{k}v_{k}+\sum_{i=1}^{m}h_{ki}v_{i}=P_{k}u_{k}+\sum_{i=1}^{m} h_{ki} u_{i}-\epsilon e^{\beta t}\left( \beta+\sum_{i=1}^{m} h_{ki} \right)<0, 
	\end{equation}
\begin{equation}
	v_{k}(x,0)<0~\text{ on }~\bar{\Omega}, 
\end{equation}

If $B_{k}~(k=1,\cdots,m)$ satisfy \eqref{2,4}, then
\begin{equation}
	 v_{k}\equiv  u_{k}-\varepsilon e^{\beta t} < 0 \text { on } S_{T}~(k=1,\cdots,m).
\end{equation}
By Lemma \ref{L1}, $v_{k}<0$ in ${\Omega}_{T}$ for all $ k=1,\cdots,m$. Letting $\epsilon\to 0$; then we obtain $u_{k}\le 0$ on ${\Omega}_{T}$ for all $ k=1,\cdots,m$. By \eqref{2.4}, we know that $u_{k}\le 0$ on $\bar{\Omega}_{T}$ for all $ k=1,\cdots,m$.

If $B_{k}~(k=1,\cdots,m)$ satisfy \eqref{2,3}, then \begin{equation}
	\frac{\partial v_{k}}{\partial_{\Omega_{k}} n}\equiv	B_{k}u_{k}\le 0\text { on } S_{T}~(k=1,\cdots,m).
\end{equation}
By Lemma \ref{L1}, $v_{k}<0$ in $\bar{\Omega}_{T}$ for $ 1\le k \le m$. Letting $\epsilon\to 0$; then we obtain $u_{k}\le 0$ on $\bar{\Omega}_{T}$ for all $ k=1,\cdots,m$.

By the first inequality in \eqref{2.4}, we see that 
\begin{equation}
	P_{i}u_{i}+h_{ii} u_{i}\le P_{i}u_{i}+h_{ii} u_{i} +\sum_{j:j\not=i}h_{ij}u_{j}\le 0.
\end{equation}
By Theorem 3.5 in \cite{HL}, we deduce that if $B_{k}~(k=1,\cdots,m)$ satisfy \eqref{2,3}, then $u_{i}(x,t)<0$ on $\bar{\Omega}\times (0,T]$ for all $ k=1,\cdots,m$; If $B_{k}~(k=1,\cdots,m)$ satisfy \eqref{2,4}, then $u_{i}(x,t)<0$ on ${\Omega}\times (0,T]$ for all $ k=1,\cdots,m$.
\end{proof}

Let $G=(V,E)$ be a finite graph. Let $\tilde{\omega}^{k}:V \times V \to \mathbb{R}$ be weight functions and $\tilde{\mu}^{k}:V \to \mathbb{R}^{+}$ be finite measure on $V$ for $k=1,\cdots,m.$ 
For any function $u$ on $V$, we define $m$ Laplace operators on $V$ by 
\begin{equation}
	\Delta_{V_{k}} u(x):=\sum_{y \in V} (u(y)-u(x))\frac{\tilde{\omega}^{k}_{y x}}{\tilde{\mu}^{k}(x)},~x\in V,~k=1,\cdots,m.
\end{equation}
Let $\tilde{d}_{k}> 0$ be constants, $k\in\{1,\cdots,m\}$. Define operators 
\begin{equation}\label{pk2}
	\tilde{P}_{k}:=\partial_{t}-\tilde{d}_{k}\Delta_{V_{k}},
\end{equation}
  where $k\in\{1,\cdots,m\}$.
  
  By similar arguments as in the proof of Lemma \ref{L1}, we have the following result.
\begin{Lemma}\label{23}
 Suppose that $h_{kl}(x,t)$ is bounded on $V_{T}$ and $h_{kl}\le 0$ in $V_{T}$ for $k\not=l$, $k,l=1,\cdots,m$ and $u_{k}\in C^{1}(V_{T})$. Suppose that for all $k=1,\cdots,m,$
	\begin{description}
		\item[(I)] $\tilde{P}_{k}u_{k}+\sum\limits_{l=1}^{m} h_{kl}u_{l} <0(>0)$ on $V_{T}$;
		\item[(II)] $u_{k}(x,0)<(>0)$ on $V$.
	\end{description}	
Then $u_{k}(x,t)<0 (>0)$ for $x\in V$, $t\in [0,T]$ and all $k=1,\cdots,m$.
\end{Lemma}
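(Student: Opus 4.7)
The plan is to mirror the proof of Lemma~\ref{L1}, replacing the subgraph $\bar{\Omega}$ with the whole graph $V$; since there is no boundary to worry about, the argument actually becomes strictly simpler. I treat only the ``$<0$'' case, as the ``$>0$'' case follows by applying the result to $-u_k$. The main point is that the awkward ``boundary case $x_0\in\partial\Omega$'' encountered in Lemma~\ref{L1} simply does not arise here, so no boundary-operator subargument is needed.

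First, since each $h_{kl}$ is bounded on $V_T$, fix $\alpha>0$ so large that $\alpha + h_{kk}(x,t)>0$ on $V_T$ for every $k$. Substituting $u_k=v_k e^{\alpha t}$, a direct computation gives
\begin{equation*}
	\tilde{P}_k v_k+\sum_{l=1}^{m}\tilde{h}_{kl}v_l<0 \quad \text{on } V_T,
\end{equation*}
where $\tilde{h}_{kk}=h_{kk}+\alpha>0$ and $\tilde{h}_{kl}=h_{kl}\le 0$ for $k\neq l$, and the initial data still satisfy $v_k(x,0)=u_k(x,0)<0$ on $V$.

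Second, because $V$ is finite and $v_k(x,\cdot)$ is continuous, there is $\delta>0$ with $\V(x,t)<\mathbf{0}$ for all $x\in V$ and $t\in[0,\delta]$. Set
\begin{equation*}
	A:=\{t\in[0,T]:\V(x,s)<\mathbf{0} \text{ for all } x\in V,\, s\in[0,t]\}, \qquad t_0:=\sup A,
\end{equation*}
so $0<t_0\le T$. I claim $t_0=T$. If not, then by continuity there exist $x_0\in V$ and an index $i$ with $v_i(x_0,t_0)=0$, so $v_i$ attains its maximum over $V\times[0,t_0]$ at $(x_0,t_0)$. Hence $\partial_t v_i(x_0,t_0)\ge 0$, and every summand in
\begin{equation*}
	\Delta_{V_i} v_i(x_0,t_0)=\sum_{y\in V}(v_i(y,t_0)-v_i(x_0,t_0))\frac{\tilde{\omega}^{i}_{yx_0}}{\tilde{\mu}^{i}(x_0)}
\end{equation*}
is nonpositive. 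Combining these with $\tilde{h}_{ii}(x_0,t_0)v_i(x_0,t_0)=0$ and $\tilde{h}_{il}(x_0,t_0)v_l(x_0,t_0)\ge 0$ for $l\ne i$ (since $\tilde{h}_{il}\le 0$ and $v_l\le 0$ at $(x_0,t_0)$), I obtain $\tilde{P}_i v_i+\sum_l \tilde{h}_{il}v_l\ge 0$ at $(x_0,t_0)$, contradicting the strict inequality from step one.

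Therefore $t_0=T$, so $\V<\mathbf{0}$ on $V\times[0,T]$ and hence $\U=e^{\alpha t}\V<\mathbf{0}$ on the same set. I do not expect any serious obstacle: the only nontrivial piece of bookkeeping is tracking the signs in $\sum_l \tilde{h}_{il}v_l$, which is exactly where the off-diagonal hypothesis $h_{kl}\le 0$ $(k\neq l)$ is used. As with Lemma~\ref{L1}, an initial small-$t$ window is needed merely to guarantee $t_0>0$.
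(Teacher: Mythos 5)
Your proposal is correct and follows essentially the same route as the paper's proof: the exponential substitution $u_k=v_ke^{\alpha t}$, the initial window giving $t_0>0$, and the contradiction at a first touching point $(x_0,t_0)$ via the sign of $\partial_t v_i$, $\Delta_{V_i}v_i$, and $\sum_l\tilde{h}_{il}v_l$. You merely spell out the sign-tracking that the paper leaves implicit, so no further comparison is needed.
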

\begin{proof}
	
Since $h_{kl}$ is bounded on ${V_{T}}$, there exists $\alpha>0$ so that $0<h_{kk}+\alpha$ on $V_{T}$ for all $k$. Let $\V=(v_1,\cdots,v_{m})$ satisfying $u_{k}=v_{k}e^{\alpha t}$; then by (I), we see that $\tilde{P}_{k}v_{k}+\sum\limits_{l=1}^{m} \tilde{h}_{kl} v_{l}<0$, where 
\begin{equation}\label{x1}
	\begin{cases}\tilde{h}_{k k}=h_{k k}+\alpha>0, & k=1, \ldots, m, \\ \tilde{h}_{k l}=h_{k l} \leqslant 0, & k \neq l, \quad k, l=1, \ldots, m .\end{cases}
\end{equation}
 By (II), there exists $\delta>0$ so that $\V (x,t)<0$, $x\in V$, $t\in [0,\delta]$.

 Define $A:=\{t: t\le T, \V(x,s)<0~\text{for}~x\in V,~s\in [0,t] \}$. Let $t_0:=sup A$; then $0<t_0 \le T$. We claim that $\V(x,t_0)<0$ on $V$. Otherwise, we can find $x_0 \in V$ and some $i\in\{1,\cdots,m\}$ so that $v_{i}(x_0, t_0)=0$. By \eqref{x1}, this implies that 	\begin{equation}
 	\tilde{P}_{i}v_{i}(x_0,t_0)+\sum_{l=1}^{m} \tilde{h}_{il} v_{l} (x_0,t_0)\ge 0.
 \end{equation} This is a contradiction. Thus, $t_0=T$ and $\V<0$ on $V\times [0,T]$.
\end{proof}

By similar discussions as in the proof of Theorem \ref{bj}, we have the following result.
	\begin{Theorem}\label{1.4}
	Assume that $h_{kl}$ is bounded and $h_{kl}\le 0$ on $V_{T}$ for $k\not=l$, $k,l=1,\cdots,m$. Suppose that for all $1\le k \le m$, \begin{equation}\label{24}
		\left\{\begin{array}{l}
			\tilde{P}_{k} u_{k}+\sum\limits_{l=1}^{m} h_{k l} u_{l} \leq 0 \quad(\geqslant 0) \text { on } V_{T}, \\
			u_{k}(x, 0) \leqslant 0(\geqslant 0) \text { in } V.
		\end{array}\right.
	\end{equation}
	Then we have $\U \le 0$ ($\ge 0$) on $V_{T}$. Furthermore, if $u_{k}(x,0)\not\equiv 0$ on $V$ for some $k\in\{1,\cdots,m\}$, then $u_{k}(x,t)<0~(>0)$ on $V_{T}$.
\end{Theorem}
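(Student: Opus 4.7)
The plan is to mirror the reduction used for Theorem \ref{bj}, with the simplification that $V$ has no boundary, so Lemma \ref{23} applies directly without any boundary-operator analysis. I treat the $\le 0$ case; the $\ge 0$ case is entirely symmetric.

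Since $h_{kl}$ is bounded on $V_T$, first pick $\beta>0$ large enough that $\beta+\sum_{l=1}^m h_{kl}>0$ on $V_T$ for every $k$. For arbitrary $\epsilon>0$, set $v_k:=u_k-\epsilon e^{\beta t}$. Because $e^{\beta t}$ is spatially constant, $\Delta_{V_k}(e^{\beta t})=0$, and a direct computation gives
\[
\tilde P_k v_k+\sum_{l=1}^m h_{kl} v_l=\Bigl(\tilde P_k u_k+\sum_{l=1}^m h_{kl} u_l\Bigr)-\epsilon e^{\beta t}\Bigl(\beta+\sum_{l=1}^m h_{kl}\Bigr)<0
\]
on $V_T$, while $v_k(x,0)=u_k(x,0)-\epsilon<0$ on $V$. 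Lemma \ref{23} then yields $v_k<0$ on $V_T$, and sending $\epsilon\to 0$ gives $u_k\le 0$ on $V_T$ for every $k$.

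For the strict conclusion, suppose $u_i(\cdot,0)\not\equiv 0$ for some $i\in\{1,\dots,m\}$. Using $h_{il}\le 0$ for $l\ne i$ together with the weak inequality $u_l\le 0$ just proved, one has $h_{il}u_l\ge 0$ for $l\ne i$, hence
\[
\tilde P_i u_i+h_{ii} u_i\le \tilde P_i u_i+\sum_{l=1}^m h_{il} u_l\le 0\quad\text{on }V_T.
\]
This is a single scalar parabolic inequality on the finite graph $G$ with bounded zeroth-order coefficient $h_{ii}$, so the scalar strong maximum principle on finite graphs (Theorem 3.5 of \cite{HL}, the same tool invoked at the end of the proof of Theorem \ref{bj}) delivers $u_i(x,t)<0$ on $V_T$. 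The main obstacle is conceptual rather than technical: one has to notice that the off-diagonal sign hypothesis $h_{kl}\le 0$, combined with the already-established $u_l\le 0$, converts the coupled inequality into a scalar one of the correct sign for $u_i$, after which no new estimates beyond Lemma \ref{23} and the scalar strong maximum principle are needed.
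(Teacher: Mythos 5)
Your proof is correct and follows essentially the same route as the paper: the shift $v_k=u_k-\epsilon e^{\beta t}$ reducing to Lemma \ref{23}, the limit $\epsilon\to 0$, and then the observation that $h_{il}u_l\ge 0$ for $l\ne i$ turns the coupled inequality into the scalar one $\tilde P_i u_i+h_{ii}u_i\le 0$, to which the scalar strong maximum principle applies. The only (cosmetic) difference is the citation: for the boundaryless graph $V$ the paper invokes Theorem 3.7 of \cite{HL} rather than Theorem 3.5, which is the version used in Theorem \ref{bj} for subgraphs with boundary.
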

\begin{proof}
	Thanks to $h_{kl}$ is bounded, there is $\beta>0$ such that $\beta+\sum\limits_{i=1}^{m} h_{ki}>0$ in $V_{T}$ for $k=1,\cdots,m$. Let $v_{k}=u_{k}-\epsilon e^{\beta t}$ with $\epsilon>0$. Then 
	\begin{equation}
		\tilde{P}_{k}v_{k}+\sum_{i=1}^{m}h_{ki}v_{i}=\tilde{P}_{k}u_{k}+\sum_{i=1}^{m} h_{ki} u_{i}-\epsilon e^{\beta t}\left( \beta+\sum_{i=1}^{m} h_{ki} \right)<0, 
	\end{equation}
	\begin{equation}
		v_{k}(x,0)<0~\text{ on }~V.
	\end{equation}
	By Lemma \ref{23}, $v_{k}<0$ on $V_{T}$, $k=1,\cdots,m$. Letting $\epsilon\to 0$. Then we obtain $u_{k}\le 0$ on $V_{T}$, $k=1,\cdots,m$.
	
	Suppose $u_{k}(x,0)\not \equiv 0$ on $V$. By the first inequality in \eqref{24}, we see that 
	\begin{equation}
		\tilde{P}_{k}u_{k}+h_{kk} u_{k}\le \tilde{P}_{k}u_{k}+h_{kk} u_{k} +\sum_{i\not=k}h_{ki}u_{i}\le 0.
	\end{equation}
	By Theorem 3.7 in \cite{HL}, we deduce that $u_{k}(x,t)<0$ on $V_{T}$.
\end{proof}

\section{Monotone methods for systems on graphs} 
	
In this section, we establish upper and lower solutions method for weak coupled parabolic systems on finite connected graphs.	
	
 We write the vector $\U$ in the split form $\U=(\U_{i}, [\U]_{i_{k}},[\U]_{d_{k}})$ and rewrite the functions $f_{k}$ as $f_{k}(t,x,\U)=f_{k}(t,x,u_{k},[\U]_{i_{k}}, [\U]_{d_{k}})$ $k=1\cdots,m$, where $i_{k}$, $d_{k}$ denote the $a_{i}-$components and $b_{i}-$components of the vector $\U$, respectively. The split form of $u$ varies with respect to $i$ and is determined by the quasimonotone property of $f_{k}.$ 

\begin{Definition}
	A vector function $\F=(f_1,\cdots,f_{m})$ is said to be mixed qusimonotonous in the order interval $<\W,\Z>$ if there exist nonnegative integers $i_{k}$, $d_{k}$ with $i_{k}+d_{k}=m-1$ such that for each $1\le k\le m$, and all $u\in <\W,\Z>$, $f_{k}(\cdot,u_{k},[\U]_{i_{k}}, [\U]_{d_{k}})$ is monotonically nondecreasing in $[\U]_{i_{k}}$, i.e., $f_{k}$ is nondecreasing with respect to each component of $[\U]_{i_{k}}$, and monotonically nonincreasing in $[\U]_{d_{k}}$, i.e., $f_{k}$ is nonincreasing with respect to each component of $[\U]_{d_{k}}$. If $i_{k}=0$ or $d_{k}=0$ for all $k$, we say $f$ is qusimonotone nonincreasing and qusimonotone nondecreasing, respectively.
\end{Definition} 
  	
 Define vector functions
\begin{equation}
	\F(x,t,\U)=\left(f_{1}(x,t,\U), \ldots, f_{m}(x,t,\U) \right), \quad \G(x,t)=\left(g_{1}(x,t), \ldots, g_{m}(x,t) \right).
\end{equation}  	
  Consider the following problem
  \begin{equation}\label{1}
  	\begin{cases}P_{k} u_{k}=f_{k}(x, t, \U) & \text { in } \Omega_{T}, \\ B_{k} u_{k}=g_{k} & \text { on } S_{T}, \\ u_{k}(x, 0)=\varphi_{k}(x) & \text { on } \bar{\Omega}, \\ k=1, \ldots, m, & \end{cases}
  \end{equation}	
  	where $\bar{ \Omega}$ is a finite graph, $P_{k}~(k=1,\cdots,m)$ are defined by \eqref{pk}, and $B_{k}~(k=1,\cdots,m)$ satisfy \eqref{2,3} or \eqref{2,4}.
  
  \begin{Definition}\label{32}
  	For $\overline{\U}=(\overline{u}_{1},\cdots,\overline{u}_{m})$ and $\underline{\U}=(\underline{u}_{1},\cdots,\underline{u}_{m})$ .
  	Define $\underline{\psi}_{k}:=\min
  	\{ \bar{u}_{k}, \underline{u}_{k} \}$,  $\overline{\psi}_{k}:=\max
  	\{ \bar{u}_{k}, \underline{u}_{k} \}$ for $1\le k \le m$, $\underline{\boldsymbol{\psi}}=(\underline{\psi}_{1},\cdots,\underline{\psi}_{m}),$ and $\bar{\boldsymbol{\psi}}=(\overline{\psi}_{1},\cdots,\overline{\psi}_{m})$. 
  \end{Definition}	
  	
 \begin{Definition}\label{3}
 	 Assume that $\F$ is mixed quasi-monotone in  $<\underline{{\boldsymbol{\psi}}},\overline{{\boldsymbol{\psi}}}>$. We say that $(\bar{\U},\underline{\U})$ is a pair of coupled upper and lower solutions of \eqref{1} if the following holds:
 	\begin{equation}
 		\begin{cases} P_{k} \bar{u}_{k} \geqslant f_{k}\left(x, t, \bar{u}_{k},[\bar{\U}]_{\mathrm{i}_{k}},[\underline{\U}]_{\mathrm{d}_{k}}\right) & \text { in } \Omega_{T}, \\ P_{k} \underline{u}_{k} \leqslant f_{k}\left(x, t, \underline{u}_{k},[\underline{\U}]_{\mathrm{i}_{k}},[\bar{\U}]_{\mathrm{d}_{k}}\right) & \text { in } Q_{T}, \\ {B}_{k} \bar{u}_{k} \geqslant g_{k} \geqslant {B}_{k} \underline{u}_{k} & \text { on } S_{T}, \\ \bar{u}_{k}(x, 0) \geqslant \varphi_{k}(x) \geqslant \underline{u}_{k}(x, 0) & \text { on } \bar{\Omega}, 
 		\end{cases}
 	\end{equation} 	
 for $1 \leqslant k \leqslant m$, where $\underline{\boldsymbol{\psi}}$ and $\bar{\boldsymbol{\psi}}$ are defined as in Definition \ref{32}.
 \end{Definition} 	
 
 Similarily, we can define the coupled upper and lower solutions of the following problem \begin{equation}\label{14}
 	\begin{cases}\tilde{P}_{k} u_{k}=f_{k}(x, t, \U) & \text { in } V_{T}, \\ u_{k}(x, 0)=g_{k}(x) & \text { on } V, \\ k=1, \ldots, m, & \end{cases}
 \end{equation}
where $G=(V,E)$ is a finite graph and $\tilde{P}_{k}$ $(k=1,\cdots,m)$ are defined by \eqref{pk2}.
 \begin{Definition}\label{34}
 	 Assume that $\F$ is mixed qusi-monotone in $<\underline{\boldsymbol{\psi}},\bar{\boldsymbol{\psi}}>$. We say that $(\bar{\U},\underline{\U})$ is a pair of coupled upper and lower solutions of \eqref{14}, if $(\bar{\U},\underline{\U})$ satisfies:
 	\begin{equation}\label{315}
 		\begin{cases} \tilde{P}_{k} \bar{u}_{k} \geqslant f_{k}\left(x, t, \bar{u}_{k},[\bar{\U}]_{\mathrm{i}_{k}},[\underline{\U}]_{\mathrm{d}_{k}}\right) & \text { in } V_{T}, \\ \tilde{P}_{k} \underline{u}_{k} \leqslant f_{k}\left(x, t, \underline{u}_{k},[\underline{\U}]_{\mathrm{i}_{k}},[\bar{\U}]_{\mathrm{d}_{k}}\right) & \text { in } V_{T}, \\  \bar{u}_{k}(x, 0) \geqslant \varphi_{k}(x) \geqslant \underline{u}_{k}(x, 0) & \text { on } V, 
 		 \end{cases}
 	\end{equation} 	
 for $1 \leqslant k \leqslant m$, where $\underline{\boldsymbol{\psi}}$ and $\bar{\boldsymbol{\psi}}$ are defined as in Definition \ref{32}.
 \end{Definition}
 
 Next, we establish the comparison principle for the problem \eqref{1}.
 \begin{Theorem}\label{21}
  Assume that $(\bar{\U},\underline{\U})$ is a pair of coupled upper and lower solutions of \eqref{1}. If $\F$ satisfies the Lipschitz condition in $<\underline{\boldsymbol{\psi}},\bar{\boldsymbol{\psi}}>,$ i.e., there exists $M>0$ so that 
 	\begin{equation}\label{s462}
 		\left| f_{k}(x,t,\U)-f_{k}(x,t,\V) \right| \le M\left| \U-\V \right|  
 	\end{equation}
 	for $(x,t)\in \Omega_{T}$, $\U, \V\in <\underline{{\boldsymbol{\psi}}},\overline{{\boldsymbol{\psi}}}>$ and $1\le k \le m$,  where $\underline{\boldsymbol{\psi}}$ and $\bar{\boldsymbol{\psi}}$ are defined as in Definition \ref{32}. Then $\bar{\U}\ge \underline{\U}$ in $\Omega_{T}$.
 \end{Theorem}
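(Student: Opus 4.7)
The plan is to run an $\varepsilon$-perturbation argument combined with a "first touching time" comparison, modeled on the proof of Theorem \ref{bj}. Set $w_k = \underline{u}_k - \bar{u}_k$ and, for $\varepsilon > 0$ and a constant $C>0$ to be chosen depending on $M$ and $m$, define
\begin{equation*}
v_k(x,t) = w_k(x,t) - \varepsilon e^{Ct}, \qquad k=1,\dots,m.
\end{equation*}
I would first show $v_k(x,t) < 0$ throughout $\bar{\Omega}_T$ for every $k$, and then let $\varepsilon \to 0^+$ to obtain $\underline{\U} \le \bar{\U}$. Since $v_k(x,0) \le -\varepsilon < 0$, continuity in $t$ gives $v_k<0$ on $\bar{\Omega}\times[0,\delta]$ for some $\delta>0$, so I set
\begin{equation*}
t_0 := \sup\bigl\{\, t\in[0,T]:\; v_l(x,s)<0 \text{ for all } l,\ x\in\bar{\Omega},\ s\in[0,t]\,\bigr\},
\end{equation*}
and argue by contradiction that $t_0 = T$. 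If $t_0<T$, continuity yields an index $k$ and a vertex $x_0\in\bar{\Omega}$ with $v_k(x_0,t_0)=0$, while $w_l \le \varepsilon e^{Ct_0}$ on $\bar{\Omega}\times[0,t_0]$ for every $l$.

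For the interior subcase $x_0\in\Omega$, the point $(x_0,t_0)$ is a maximizer of $v_k$ over $\bar{\Omega}\times[0,t_0]$, so $\partial_t v_k(x_0,t_0)\ge 0$ and $\Delta_{\Omega_k}v_k(x_0,t_0)\le 0$, hence $P_k v_k(x_0,t_0)\ge 0$. Subtracting the two inequalities in Definition \ref{3} gives
\begin{equation*}
P_k w_k \le f_k\bigl(x,t,\underline{u}_k,[\underline{\U}]_{i_k},[\bar{\U}]_{d_k}\bigr) - f_k\bigl(x,t,\bar{u}_k,[\bar{\U}]_{i_k},[\underline{\U}]_{d_k}\bigr).
\end{equation*}
At $(x_0,t_0)$, I use mixed quasimonotonicity of $\F$ to replace $[\underline{\U}]_{i_k}$ by $[\bar{\U}]_{i_k}+\varepsilon e^{Ct_0}$ (nondecreasing in these entries, with $\underline{u}_l \le \bar{u}_l+\varepsilon e^{Ct_0}$ for $l\in i_k$) and $[\bar{\U}]_{d_k}$ by $[\underline{\U}]_{d_k}-\varepsilon e^{Ct_0}$ (nonincreasing in these entries, with $\bar{u}_l\ge \underline{u}_l-\varepsilon e^{Ct_0}$ for $l\in d_k$), so the difference of $f_k$'s is bounded above by its value with all arguments equal up to a perturbation of size $\varepsilon e^{Ct_0}$. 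Applying the Lipschitz bound \eqref{s462} then yields an estimate of the form $Mm\,\varepsilon e^{Ct_0}$. Combining with $P_k v_k = P_k w_k - C\varepsilon e^{Ct_0}$ produces $P_k v_k(x_0,t_0) \le (Mm-C)\varepsilon e^{Ct_0} < 0$ once $C>Mm$, contradicting $P_k v_k(x_0,t_0)\ge 0$.

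For the boundary subcase $x_0\in\partial\Omega$, I split along the two boundary conditions. Under the Dirichlet choice \eqref{2,4}, $B_k u_k = u_k$ together with Definition \ref{3} gives $\underline{u}_k(x_0,t_0)\le g_k(x_0,t_0)\le \bar{u}_k(x_0,t_0)$, so $w_k(x_0,t_0)\le 0$ directly contradicts $v_k(x_0,t_0)=0$. Under the Neumann choice \eqref{2,3}, the key observation is that $B_k(\varepsilon e^{Ct}) = 0$ because $\varepsilon e^{Ct}$ is spatially constant, so $B_k v_k = B_k w_k \le 0$ on $S_T$; on the other hand, at the boundary maximizer $x_0$ the sum definition of $B_k$ forces $B_k v_k(x_0,t_0)\ge 0$. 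Hence $B_k v_k(x_0,t_0)=0$, which compels $v_k(y,t_0)=0$ for every $y\in\Omega$ adjacent to $x_0$ with $\tilde{\omega}^k_{x_0 y}>0$; such a $y$ exists since $x_0\in\partial\Omega$, and replacing $x_0$ by this interior vertex reduces the situation to the interior subcase, which already yields a contradiction.

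The main obstacle is the Neumann boundary subcase: the "first touch" can occur on $\partial\Omega$, so the interior argument does not apply directly. This is resolved by the discrete maximum-principle observation above, which lets one slide the maximum from $\partial\Omega$ to an adjacent vertex of $\Omega$. A secondary subtlety is packaging the Lipschitz estimate through quasimonotonicity: one cannot compare $f_k$'s of $\underline{\U}$ and $\bar{\U}$ coordinate-by-coordinate with a definite sign, so the perturbation $\varepsilon e^{Ct}$ is essential to convert the one-sided Lipschitz control \eqref{s462} into a strict inequality that is preserved after absorbing all monotonicity-driven error terms.
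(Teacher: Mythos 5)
Your argument is correct in outline but follows a genuinely different route from the paper. The paper does not re-run a first-touching-time argument at all: it sets $w_{k}=\bar{u}_{k}-\underline{u}_{k}$, telescopes the difference $f_{k}(x,t,\bar{u}_{k},[\bar{\U}]_{i_{k}},[\underline{\U}]_{d_{k}})-f_{k}(x,t,\underline{u}_{k},[\underline{\U}]_{i_{k}},[\bar{\U}]_{d_{k}})$ one coordinate at a time into difference quotients $c_{k},c_{k,j},D_{k,j}$, which are bounded by the Lipschitz hypothesis and carry the correct signs by mixed quasimonotonicity, and thereby reduces the statement to the linear system inequality $P_{k}w_{k}+\sum_{j}a_{kj}w_{j}\ge 0$ with $a_{kj}\le 0$ for $j\ne k$, to which the already-proved maximum principle (Theorem \ref{bj}) applies as a black box. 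You instead inline the content of Lemma \ref{L1}/Theorem \ref{bj} (the $\varepsilon e^{Ct}$ perturbation, the first-touching time, the interior/Dirichlet/Neumann trichotomy, and the trick of sliding a boundary maximum to an adjacent interior vertex) and handle the nonlinearity on the spot via quasimonotonicity plus \eqref{s462}. What the paper's route buys is modularity and, more importantly, automatic containment: every intermediate point in its telescoping has each coordinate equal to either $\bar{u}_{l}$ or $\underline{u}_{l}$, hence lies in $<\underline{\boldsymbol{\psi}},\bar{\boldsymbol{\psi}}>$ by the very definition of $\underline{\boldsymbol{\psi}},\bar{\boldsymbol{\psi}}$ in Definition \ref{32}. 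Your route is more self-contained but must re-establish the boundary analysis.

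The one step you should repair is the interior estimate: you replace $[\underline{\U}]_{i_{k}}$ by $[\bar{\U}]_{i_{k}}+\varepsilon e^{Ct_{0}}$ and $[\bar{\U}]_{d_{k}}$ by $[\underline{\U}]_{d_{k}}-\varepsilon e^{Ct_{0}}$, but $\bar{u}_{l}+\varepsilon e^{Ct_{0}}\ge \bar{\psi}_{l}$ and $\underline{u}_{l}-\varepsilon e^{Ct_{0}}\le\underline{\psi}_{l}$, so these shifted arguments exit the order interval $<\underline{\boldsymbol{\psi}},\bar{\boldsymbol{\psi}}>$ on which both the quasimonotonicity (Definition \ref{3}) and the Lipschitz condition \eqref{s462} are assumed; as written, neither property may be invoked there. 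The fix is to route through the envelopes instead: at $(x_{0},t_{0})$ write the difference as
\begin{equation*}
\bigl[f_{k}(\underline{u}_{k},[\underline{\U}]_{i_{k}},[\bar{\U}]_{d_{k}})-f_{k}(\underline{u}_{k},[\bar{\boldsymbol{\psi}}]_{i_{k}},[\underline{\boldsymbol{\psi}}]_{d_{k}})\bigr]+\bigl[f_{k}(\underline{u}_{k},[\bar{\boldsymbol{\psi}}]_{i_{k}},[\underline{\boldsymbol{\psi}}]_{d_{k}})-f_{k}(\bar{u}_{k},[\bar{\U}]_{i_{k}},[\underline{\U}]_{d_{k}})\bigr];
\end{equation*}
the first bracket is $\le 0$ by quasimonotonicity (all points involved lie in the interval), and in the second bracket each coordinate discrepancy equals $\max\{0,\underline{u}_{l}-\bar{u}_{l}\}\le\varepsilon e^{Ct_{0}}$ (and $\underline{u}_{k}-\bar{u}_{k}=\varepsilon e^{Ct_{0}}$), so \eqref{s462} legitimately gives the $O(M m\,\varepsilon e^{Ct_{0}})$ bound and your choice $C>Mm$ closes the contradiction. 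With that substitution, and noting that the Neumann identity $B_{k}v_{k}(x_{0},t_{0})=0$ forces $v_{k}(y,t_{0})=0$ at some $y\in\Omega$ adjacent to $x_{0}$ with positive weight (which exists because $x_{0}\in\partial\Omega$), your proof goes through.
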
 	
  	
\begin{proof}
For every $k\in\{1,\cdots,m\}$, we suppose that 
where integers $j_1,j_2,\cdots,j_{i_{k}}$, $l_1,l_2,\cdots,l_{d_{k}}$ satisfying 
	$$\{ k,j_1,j_2,\cdots,j_{i_{k}},l_1,l_2,\cdots,l_{d_{k}}\}=\{1,2,\cdots,m\}$$ and 
	$j_1<j_2<\cdots<j_{i_k}, l_1<l_2<\cdots<l_{d_k}$.
	Let $w_{k}:=\overline{u}_{k}-\underline{u}_{k}$; then $w_{k}$ satisfies 
	\begin{equation}
		\begin{aligned}
			&P_{k} w_{k} \geqslant f_{k} \left(x, t, \bar{u}_{k},[\bar{\U}]_{i_ k}, [\underline{\U}]_{d_ k}\right)-f_{k}\left(x, t, \underline{u}_{k},[\underline{\U}]_{i_ {k}},[\bar{\U}]_{d_{k}}\right)\\		
			&=c_{k}(x, t)\left(\bar{u}_{k}-\underline{u}_{k}\right)+\sum_{j:u_{j} \in\left[\U\right]_{i_ k}} c_{k, j}(x, t) w_{j}-\sum_{j:u_{j} \in\left[\U\right]_{d _k}} D_{k, j}(x, t) w_{j},
		\end{aligned}
	\end{equation}
where 
\begin{equation*}
c_{k}(x,t)	=\left\{\begin{array}{cl}
		\frac{f_{k}\left(x, t, \bar{u}_{k},[\bar{\U}]_{i_ k},[\underline{\U}]_{d_{k}} \right)-f_{k}\left(x, t, \underline{u}_{k},[\bar{\U}]_{i_ k},[\underline{\U}]_{d_ k}\right)}{\bar{u}_{k}-\underline{u}_{k}} & ,~ \text{if}~ \bar{u}_{k} \neq \underline{u}_{k}, \\
		0, & ,~ \text{if}~ \bar{u}_{k}=\underline{u}_{k} ,
	\end{array}\right.
\end{equation*}
\begin{equation*}
	c_{k,j_1}(x,t)=\left\{ \begin{array}{cl}
		\frac{f_{k}\left(\cdot, \cdot, \underline{u}_{k},[\bar{\U}]_{i_ k},[\underline{\U}]_{d_ k} \right)-f_{k}\left(\cdot, \cdot, \underline{u}_{k},\left(\underline{u}_{j_1}, \bar{u}_{j_2}, \cdots, \bar{u}_{j_{i_ k}}\right),[\underline{\U}]_{d_ k}\right)}{\bar{u}_{j_1}-\underline{u}_{j_1}}&, ~ \text{if}~\bar{u}_{j_1} \neq \underline{u}_{j_1}, \\
		0&, ~ \text{if}~\bar{u}_{j_1}=\underline{u}_{j_1},
	\end{array}\right.
\end{equation*}
\begin{equation*}
	c_{k,j_2}(x,t)=\left\{ \begin{array}{cl}
		\frac{f_{k}\left(\cdot, \cdot , \underline{u}_{k},\left(\underline{u}_{j_1}, \bar{u}_{j_2}, \cdots, \bar{u}_{j_{i_ k}}\right),[\underline{\U}]_{d_ k}\right)-f_{k}\left(\cdot,\cdot, \underline{u}_{k,}\left(\underline{u}_{j_1}, \underline{u}_{j_2}, \bar{u}_{j_3}, \cdots, \bar{u}_{j_{i_ k} }\right), [\underline{\U}]_{d_ k}\right)}{\bar{u}_{j_2}-\underline{u}_{j_2}}&,~ \text{if}~\bar{u}_{j_2} \not=\underline{u}_{j_2}, \\
		0&, ~ \text{if}~\bar{u}_{j_2}=\underline{u}_{j_2},
	\end{array}\right.
\end{equation*}
$\cdots$
\begin{equation*}
	c_{k,j_{i_k}}(x,t)=\left\{ \begin{array}{cl}
		\frac{f_{k}\left(\cdot, \cdot , \underline{u}_{k},\left(\underline{u}_{j_1}, \cdots, \underline{u}_{j_ {k}-1}, \bar{u}_{j_{i_k}}\right),[\underline{\U}]_{d_ k}\right)-f_{k}\left(\cdot,\cdot, \underline{u}_{k}, [\underline{\U}]_{i_{k}}, [\underline{\U}]_{d_ k} \right)}{\bar{u}_{j_{i_k}}-\underline{u}_{j_{i_k}}}&,~ \text{if}~\bar{u}_{j_{i_k}} \not=\underline{u}_{j_{i_k}}, \\
		0&, ~ \text{if}~\bar{u}_{j_{i_k}}=\underline{u}_{j_{i_k}},
	\end{array}\right.
\end{equation*}
and
\begin{equation*}
	D_{k,l_1} (x,t)= \left\{ \begin{array}{cl}	
	\frac{f_{k}\left(\cdot, \cdot,\underline{u}_{k},[\underline{\U}]_{i_ {k}},[\underline{\U}]_{d_ k}\right)-f_{k}\left(\cdot,\cdot, \underline{u}_{k},[\underline{\U}]_{i_{k}},\left(\bar{u}_{l_1 }, \underline{u}_{l_2}, \cdots, \underline{u}_{l_{d_{k}} }\right)\right)}{\left(\underline{u}_{l_1}-\bar{u}_{l_1}\right)}
				  &, ~ \text{if}~\underline{u}_{l_1} \not=\bar{u}_{l_1}, \\
				0 &,~ \text{if}~ \underline{u}_{l_1}=\bar{u}_{l_1} ,				
			\end{array}\right.
\end{equation*}
\begin{equation*}
\begin{aligned}
	&	D_{k,l_2} (x,t)= \\	
	~ &\left\{ \begin{array}{cl}	
	\frac{f_{k}\left(\cdot,\cdot, \underline{u}_{k},[\underline{\U}]_{i_ {k}},(\bar{u}_{l_1},\underline{u}_{l_2},\cdots,\underline{u}_{l_{d_{k}}}     )  \right)-f_{k}\left(\cdot,\cdot, {u}_{k},[\underline{\U}]_{i_{k}},\left( \bar{u}_{l_1 }, \overline{u}_{l_2}, \underline{u}_{l_3} \cdots, \underline{u}_{l_{d_{k}} }\right)\right)}  {{\underline{u}}_{l_2}-\bar{u}_{l_2}}
		, &\text{if}~\underline{u}_{l_2} \not=\bar{u}_{l_2}, \\
		 0, &\text{if}~  \underline{u}_{l_2}=\bar{u}_{l_2} ,				
	\end{array}\right.
\end{aligned}
\end{equation*}
$\cdots$
\begin{equation*}
	\begin{aligned}
		&	D_{k,l_{d_{k}}} (x,t)= \\	
		~ &\left\{ \begin{array}{cl}	
			\frac{f_{k}\left(\cdot,\cdot, {u}_{k},[\underline{\U}]_{i_ {k}},(\bar{u}_{l_1},\cdots,\bar{u}_{l_{d_{k-1}}},\underline{u}_{l_{d_{k}}}     )  \right)-f_{k}\left(\cdot,\cdot, {u}_{k},[\underline{\U}]_{i_{k}},\left( \bar{u}_{l_{1} }, \cdots,\overline{u}_{l_{d_{k}-1}},   \bar{u}_{l_{d_{k}} }\right)\right)}  {{\underline{u}}_{l_{d_{k}}}-\bar{u}_{l_{d_{k}}}}
			,  &\text{if}~\underline{u}_{l_{d_{k}}} \not=\bar{u}_{l_{d_{k}}}, \\
			0 ,& \text{if}~ \underline{u}_{l_{d_{k}}}=\bar{u}_{l_{d_{k}}} .				
		\end{array}\right.
	\end{aligned}
\end{equation*}
Let 
\begin{equation}\label{38}
	a_{kk}=-c_{k}, a_{kj}=-c_{k,j}~\text{if}~j:~u_{j}\in [\U]_{i_{k}},~ \text{and}~ a_{kj}=D_{k,j}~ \text{if}~j: u_{j}\in [\U]_{d_k}.
\end{equation}
Then we see that $\{a_{ij}\}_{i,j=1}^{m}$ is bounded in $\Omega_{T}$, $a_{kj}\le 0$ if $k\not=j$, and $P_{k}w_{k}+\sum\limits_{j=1}^{m} a_{k{j}} w_{j} \ge 0$ in $\Omega_{T}$. Clearly, $B_{k}w_{k}\ge 0$ on $S_{T}$ and $w_{k}(x,0)\ge 0$ on $\Omega$. By Theorem \ref{bj}, we have $w_{k}\ge 0$ in $\Omega_{T}$ for $1\le k \le m$, i.e., $\bar{u}_{k}\ge \underline{u}_{k}$ in $\Omega_{T}$ for $1\le k \le m$.
\end{proof} 	
 
 In the following, we establish the upper and lower solution method for the problem \eqref{1}.
 \begin{Theorem}\label{sx}
 	Let $(\bar{\U},\underline{\U})$ be a pair of coupled upper and lower solutions of \eqref{1} and $\F$ satisfy the condition of Theorem \ref{21}. Assume that $\F$ has mixed qusimonotonicity property in $<\underline{\U},\bar{\U}>$, and $\F$ satisfies the Lipschitz condition in $<\underline{\U},\bar{\U}>$, i.e., f satisfies \eqref{s462} for $(x,t)\in \Omega_{T}$,  $\U, \V\in <\underline{\U},\bar{\U}>$ and $1\le k \le m$. Then there exist two monotone sequences $\{\underline{\U}^{i}\}_{i=1}^{\infty}$ and $\{\overline{\U}^{i}\}_{i=1}^{\infty}$ satisfying 
 	\begin{equation}
 		\underline{\U} \leqslant \underline{\U}^{i} \leqslant \underline{\U}^{i+1} \leqslant \bar{\U}^{i+1} \leqslant \bar{\U}^{i} \leqslant \bar{\U} \text { for all } i \geqslant 1, \lim\limits_{i\to \infty} \underline{\U}^{i}=\lim\limits_{i\to \infty} \overline{\U}^{i}=\U
 	\end{equation}
 and $\U$ is the unique solution to \eqref{1} in the order interval $<\underline{\U},\overline{\U}>$.
 \end{Theorem}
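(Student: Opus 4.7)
The plan is a Picard-type monotone iteration anchored at the given pair $(\bar{\U},\underline{\U})$, with convergence and uniqueness obtained from the linear comparison results of Section 2. Because $\bar{\Omega}$ is finite, each $\Delta_{\Omega_k}$ is a bounded operator on a finite-dimensional space, so every linear auxiliary problem below is a finite system of linear ODEs in $t$ with prescribed boundary data on $\partial\Omega$, and thus admits a unique global-in-time solution by classical ODE theory. Choose $M$ strictly larger than the Lipschitz constant in \eqref{s462}; then for each $k$, the shifted map $u_k\mapsto Mu_k+f_k(x,t,u_k,\cdot,\cdot)$ is nondecreasing in $u_k$ on the order interval $<\underline{\U},\bar{\U}>$, which absorbs the possible lack of monotonicity of $f_k$ in its own $k$-th slot.

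Set $\bar{\U}^{0}=\bar{\U}$ and $\underline{\U}^{0}=\underline{\U}$, and for $i\geq 0$ define the next iterates componentwise as the unique solutions of the decoupled linear problems
\begin{equation*}
(P_k+M)\bar{u}_k^{i+1}=M\bar{u}_k^{i}+f_k\bigl(x,t,\bar{u}_k^{i},[\bar{\U}^{i}]_{i_k},[\underline{\U}^{i}]_{d_k}\bigr),
\end{equation*}
\begin{equation*}
(P_k+M)\underline{u}_k^{i+1}=M\underline{u}_k^{i}+f_k\bigl(x,t,\underline{u}_k^{i},[\underline{\U}^{i}]_{i_k},[\bar{\U}^{i}]_{d_k}\bigr),
\end{equation*}
on $\Omega_T$, each subject to $B_k u=g_k$ on $S_T$ and $u(x,0)=\varphi_k(x)$ on $\bar{\Omega}$.

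Next I prove by induction on $i$ the chain $\underline{\U}\leq\underline{\U}^{i}\leq\underline{\U}^{i+1}\leq\bar{\U}^{i+1}\leq\bar{\U}^{i}\leq\bar{\U}$. The two ``same-sequence'' bounds $\bar{\U}^{i+1}\leq\bar{\U}^{i}$ and $\underline{\U}^{i+1}\geq\underline{\U}^{i}$ follow by subtracting consecutive iterations (or, for the base step, from the defining inequalities in Definition \ref{3}) and applying Theorem \ref{bj} to the decoupled scalar operator $P_k+M$, whose coefficients $h_{kk}=M$, $h_{kl}=0$ for $l\neq k$ trivially satisfy its hypotheses. The delicate ``cross'' bound $\bar{u}_k^{i+1}\geq\underline{u}_k^{i+1}$ comes from a three-step decomposition of the right-hand side difference $f_k(\bar{u}_k^{i},[\bar{\U}^{i}]_{i_k},[\underline{\U}^{i}]_{d_k})-f_k(\underline{u}_k^{i},[\underline{\U}^{i}]_{i_k},[\bar{\U}^{i}]_{d_k})$: the $u_k$-slot change is bounded below by $-M(\bar{u}_k^{i}-\underline{u}_k^{i})$ via \eqref{s462} and is cancelled by the $M(\bar{u}_k^{i}-\underline{u}_k^{i})$ already present; the $[\U]_{i_k}$-slot change is nonnegative by quasi-monotone nondecrease combined with $[\bar{\U}^{i}]_{i_k}\geq[\underline{\U}^{i}]_{i_k}$; and the $[\U]_{d_k}$-slot change is nonnegative by quasi-monotone nonincrease combined with $[\underline{\U}^{i}]_{d_k}\leq[\bar{\U}^{i}]_{d_k}$. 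Hence $(P_k+M)(\bar{u}_k^{i+1}-\underline{u}_k^{i+1})\geq 0$ with zero initial and boundary data, and Theorem \ref{bj} delivers the inequality.

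Monotonicity and boundedness force pointwise limits $\bar{\U}^{*}:=\lim_{i\to\infty}\bar{\U}^{i}\geq\underline{\U}^{*}:=\lim_{i\to\infty}\underline{\U}^{i}$ on $\bar{\Omega}_T$; continuity of $f_k$ together with the finiteness of $\bar{\Omega}$ allows passage to the limit in the Duhamel form of each linear ODE, so both limits solve \eqref{1} with the prescribed initial and boundary data. Setting $W_k:=\bar{u}_k^{*}-\underline{u}_k^{*}\geq 0$ and invoking \eqref{s462} componentwise gives $P_k W_k\leq M\sum_{j}W_j$ on $\Omega_T$ with $W_k(x,0)=0$ and $B_k W_k=0$; this meets the hypotheses of Theorem \ref{bj} with $h_{kj}=-M\leq 0$, forcing $W_k\leq 0$ and hence $W_k\equiv 0$. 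Thus $\U:=\bar{\U}^{*}=\underline{\U}^{*}$ solves \eqref{1}, and applying the same Lipschitz-plus-Theorem \ref{bj} argument to the difference of any two solutions in $<\underline{\U},\bar{\U}>$ yields uniqueness in the order interval. The main technical obstacle is the quasi-monotone bookkeeping in the cross inequality, where the entanglement of the two sequences through the $[\bar{\U}]_{d_k}$ and $[\underline{\U}]_{d_k}$ terms must be unwound carefully; once this is handled, everything else reduces to iterated applications of Theorem \ref{bj}.
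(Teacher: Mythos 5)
Your proposal is correct and follows essentially the same route as the paper: the same $M$-shifted monotone iteration $\bar{\U}^{i+1}=\mathcal{F}(\bar{\U}^{i},\underline{\U}^{i})$, $\underline{\U}^{i+1}=\mathcal{F}(\underline{\U}^{i},\bar{\U}^{i})$, with the ordering of the iterates established through Theorem \ref{bj}, the limit identified via the integral (Duhamel) form of the finite ODE system, and equality of the two limits plus uniqueness obtained from the Lipschitz condition together with another application of Theorem \ref{bj}. No substantive differences from the paper's argument.
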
 	
  \begin{proof}
  	By Theorem \ref{21}, we have $\underline{\U}\le \bar{\U}$. For any given $\V,\W\in <\underline{\U},\overline{\U}>$, it is well-known that the problem 
  	\begin{equation}
  		\begin{cases}P_{k} u_{k}+M u_{k}=f_{k}\left(x, t, v_{k},[\V]_{\mathrm{i}_{k}},[\mathbf{w}]_{\mathrm{d}_{k}}\right)+M v_{k} & \text { in } \Omega_{T}, \\ {B}_{k} u_{k}=g_{k} & \text { on } S_{T}, \\ u_{k}(x, 0)=\varphi_{k}(x) & \text { on } \bar{ \Omega} \end{cases}
  	\end{equation}
  admits a unique solution $u_{k}$, $1\le k \le m$. Thus we can define an operator $\mathcal{F}$ by $\U=\mathcal{F}(\V,\mathbf{w})$ and 
  \begin{equation}
  	\underline{\U}^{1}=\mathcal{F}(\underline{\U}, \bar{\U}), \quad \bar{\U}^{1}=\mathcal{F}(\bar{\U}, \underline{\U}), \quad \underline{\U}^{i+1}=\mathcal{F}\left(\underline{\U}^{i}, \bar{\U}^{i}\right),~ \text { and } \bar{\U}^{i+1}=\mathcal{F}\left(\bar{\U}^{i}, \underline{\U}^{i}\right).
  \end{equation}

 In the following, we always assume that $k$ is any number from $1$ to $m$.

It is easy to check that $v_{k}:=\overline{u}_{k}-\overline{u}_{k}^{1}$ satisfies 
\begin{equation}
	\begin{cases} P_{k} v_{k}+M v_{k} \geqslant 0 & \text { on } \Omega_{T}, \\ {B}_{k} v_{k} \geqslant 0 & \text { on } S_{T}, \\ v_{k}(x, 0) \geqslant 0 & \text { on } \bar{\Omega} .\end{cases}
\end{equation}
By Theorem \ref{bj}, we have $v_{k}\ge 0$, i.e., $\bar{u}_{k}\ge \bar{u}_{k}^{1}$. By a similar discussion, we have $\underline{u}_{k}\le \underline{u}_{k}^{1}$. Set $w_{k}:=\bar{u}_{k}^{1}-\underline{u}_{k}^{1}$. By virtue of $\bar{\U}\ge \underline{\U}$, by the mixed qusimonotonicity of $f_{k}$, we know that 
\begin{equation}
	\begin{aligned}
		P_{k} w_{k}+M w_{k} &=f_{k}\left(\cdot,\cdot, \bar{u}_{k},[\bar{\U}]_{\mathrm{i}_{k}},[\underline{\U}]_{\mathrm{d}_{k}}\right)-f_{k}\left(\cdot,\cdot, \underline{u}_{k},[\underline{\U}]_{\mathrm{i}_{k}},[\bar{\U}]_{\mathrm{d}_{k}}\right)+M\left(\bar{u}_{k}-\underline{u}_{k}\right) \\
		& \geqslant f_{k}\left(\cdot,\cdot, \bar{u}_{k},[\underline{\U}]_{\mathrm{i}_{k}},[\underline{\U}]_{\mathrm{d}_{k}}\right)-f_{k}\left(\cdot,\cdot, \underline{u}_{k},[\underline{\U}]_{\mathrm{i}_{k}},[\underline{\U}]_{\mathrm{d}_{k}}\right)+M\left(\bar{u}_{k}-\underline{u}_{k}\right) \\
		& \geqslant 0 \quad \text { in } \Omega_{T}, \\
		{B}_{k} w_{k} &\ge0 \quad \text { on } S_{T}, \\
		w_{k}(x, 0) &\ge0 \quad \text { in } \Omega .
	\end{aligned}
\end{equation}
By Theorem \ref{bj}, $w_{k}\ge 0$, i.e., $\bar{u}_{k}^{1} \ge \underline{u}_{k}^{1}$. Let $z_{k}:=\bar{u}_{k}^{1}-\bar{u}_{k}^{2};$ then we obtain 
\begin{equation}
	\begin{aligned}
		P_{k} z_{k}+M z_{k} &=f_{k}\left(\cdot,\cdot, \bar{u}_{k},\left[\bar{\U}\right] _{\mathrm{i}_{k}},[\underline{\U}]_{\mathrm{d}_{k}}\right)-f_{k}\left(\cdot,\cdot, \bar{u}_{k}^{1},\left[\bar{\U}^{1}\right]_{\mathrm{i}_{k}},\left[\underline{\U}^{1}\right]_{\mathrm{d}_{k}}\right)+M\left(\bar{u}_{k}-\bar{u}_{k}^{1}\right)\\
		& \geqslant f_{k}\left(\cdot,\cdot, \bar{u}_{k},\left[\bar{\U}^{1}\right]_{\mathrm{i}_{k}},[\underline{\U}]_{\mathrm{d}_{k}}\right)-f_{k}\left(\cdot,\cdot, \bar{u}_{k}^{1},\left[\bar{\U}^{1}\right]_{\mathrm{i}_{k}},[\underline{\U}]_{\mathrm{d}_{k}}\right)+M\left(\bar{u}_{k}-\bar{u}_{k}^{1}\right) \\
		& \geqslant 0 \quad \text { in } \Omega_{T}, \\
	{B}_{k} z_{k} &\ge0 \quad \text { on } S_{T}, \\
		z_{k}(x, 0) &\ge0 \quad \text { on } \bar{\Omega} .
	\end{aligned}
\end{equation}
By Theorem \ref{bj}, $z_{k}\ge 0$, i.e., $\bar{u}_{k}^{1}\ge \bar{u}_{k}^{2}$. By a similar argument as above, we have $\underline{u}_{k}^{1}\le \underline{u}_{k}^{2}$.

By induction, we have $\underline{\U}\le \underline{\U}^{i}\le \underline{\U}^{i+1}\le \bar{\U}^{i+1}\le \bar{\U}^{i} \le \bar{\U}$ for $i\in\mathbb{Z}^+$. Thus $\hat{\U}:=\lim\limits_{i\to \infty}\bar{\U}^{i}$ and $\tilde{\U}:=\lim\limits_{i\to \infty}\underline{\U}^{i}$ are well-defined. Clearly, $\hat{\U}\ge \tilde{\U}$. Since 
\begin{equation*}
	P_{k} \bar{u}_{k}^{i+1} + M\bar{u}_{k}^{i+1} = f_{k} (x,t,\bar{u}_{k}^{i},[\bar{\U}^{i}]_{i_{k}}, [\underline{\U}^{i}]_{d_{k}} ) +M\bar{u}_{k}^{i}~\text{ in }~\Omega_{T}, 
\end{equation*}
we see that for any $t_1,~t_2$ satisfying $(t_1, t_2)\subset (0,T]$, 
\begin{equation}\label{214}
	\bar{u}_{k}^{i+1}(x,t_2) -\bar{u}_{k}^{i+1}(x,t_1)=\int_{t_1}^{t_2} \left[d_{k}\Delta_{\Omega_{k}} \bar{u}^{i+1}_{k} -M \bar{u}^{i+1}_{k} +f_{k}(x,t,\bar{u}^{i}_{k},[\bar{\U}^{i}]_{i_{k}}, [\underline{\U}^{i}]_{d_{k}}) + M \bar{u}^{i}_{k} \right]~dt.
\end{equation}
Letting $i\to \infty$ in \eqref{214}; then we have 
\begin{equation}\label{}
	\hat{u}_{k}(x,t_2) -\hat{u}_{k}(x,t_1)=\int_{t_1}^{t_2}\left[ d_{k}\Delta_{\Omega_{k}} \hat{u}_{k}-M\hat{u}_{k} +f_{k}(x,t,\hat{u}_{k},[\hat{\U}]_{i_{k}}, [\tilde{\U}]_{d_{k}}) + M \hat{u}_{k} \right]~dt.
\end{equation}
This implies that 
\begin{equation}
	\frac{\partial \hat{u}_{k}(x,t)}{\partial t}=d_{k}\Delta_{\Omega_{k}} \hat{u}_{k} (x,t) + f_{k} (x,t,\hat{u}_{k},[\hat{\U}]_{i_{k}},[\tilde{\U}]_{d_{k}})~\text{ in }~\Omega_{T}.
\end{equation}
In the same way, 
\begin{equation}
	\frac{\partial \tilde{u}_{k}(x,t)}{\partial t}=d_{k}\Delta_{\Omega_{k}} \tilde{u}_{k} (x,t) + f_{k} (x,t,\tilde{u}_{k},[\tilde{\U}]_{i_{k}},[\hat{\U}]_{d_{k}})~\text{ in }~\Omega_{T}.
\end{equation}
We next show that $\hat{u}_{k}\equiv\tilde{u}_{k}$ for all $k$. In view of $\mathbf{e}:=\hat{\U}- \tilde{\U}\ge 0$, we see that there exists a sufficiently large $M^{'}>M$ such that \begin{equation}
	\begin{aligned}
		P_{k} e_{k} &=-f_{k}\left(x, t, \tilde{u}_{k},[\tilde{\U}]_{\mathrm{i}_{k}},[\hat{\U}]_{\mathrm{d}_{k}}\right)+f_{k}\left(x, t, \hat{u}_{k},[\hat{\U}]_{\mathrm{i}_{k}},[\tilde{\U}]_{\mathrm{d}_{k}}\right) \\
		& \le M\left( \left|\tilde{u}_{k}-\hat{u}_{k}\right|^{2}+\left|[\tilde{\U}]_{\mathrm{i}_{k}}-[\hat{\U}]_{\mathrm{i}_{k}}\right|^{2}+\left|[\hat{\U}]_{\mathrm{d}_{k}}-[\tilde{\U}]_{\mathrm{d}_{k}}\right|^{2} \right)^{\frac{1}{2}} \\
		&\le M^{'} e_{k}+M^{'} \sum_{j \neq k} e_{j} \text { in } \Omega_{T}, \\
	{B}_{k} e_{k} &=0 \text { on } S_{T}, \quad e_{k}(x, 0)=0 \text { in } \Omega .
	\end{aligned}
\end{equation}
By Theorem \ref{bj}, $\mathbf{e}\le 0$. Thus, $\tilde{\U}\equiv \hat{\U}$. Therefore, we see that $\tilde{\U}\equiv \hat{\U}$ is a solution to \eqref{1}. Assume that $\mathbf{w} \in<\underline{\U},\bar{\U}>$ is a solution to \eqref{1}. Then $\mathbf{w}=\mathcal{F}(\mathbf{w},\mathbf{w})$. By a similar discussion as in the proof of monotonic properties of $\{\underline{\U}^{i}\}$ and $\{\bar{\U}^{i}\}$, we can show that $\underline{\U}^{i}\le \mathbf{w} \le \bar{\U}^{i}$ for $i=1,2,3\cdots$. Thus, $\tilde{\U}\le \mathbf{w} \le \hat{\U}$, which implies that $\tilde{\U}\equiv \mathbf{w} \equiv \hat{\U}.$  
  \end{proof}	
 
 Hereinbelow, we establish the upper and lower solutions method for the problem \eqref{14}.
 
\begin{Theorem}\label{212}
	Assume that $(\bar{\U},\underline{\U})$ is a pair of coupled upper and lower solutions of \eqref{14}. If $\F$ satisfies the Lipschitz condition in $<\underline{\boldsymbol{\psi}},\bar{\boldsymbol{\psi}}>,$ i.e., there exists $M>0$ so that 
	\begin{equation}\label{s46}
		\left| f_{k}(x,t,\U)-f_{k}(x,t,\V) \right| \le M\left| \U-\V \right|  
	\end{equation}
	for $(x,t)\in V_{T}$, $\U, \V \in <\underline{\boldsymbol{\psi}},\bar{\boldsymbol{\psi}}>$ and $1\le k \le m$, where $\underline{\boldsymbol{\psi}}$ and $\bar{\boldsymbol{\psi}}$ be defined as in  Definition \ref{32}. Then $\bar{\U}\ge \underline{\U}$ on $V_{T}$.
\end{Theorem}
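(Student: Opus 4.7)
The plan is to mirror the proof of Theorem \ref{21}, with Theorem \ref{1.4} (the maximum principle on $V_{T}$) taking the place of Theorem \ref{bj} and with no boundary term to verify. First I would set $w_{k}:=\bar{u}_{k}-\underline{u}_{k}$ for $k=1,\dots,m$, aiming to derive for $\mathbf{w}=(w_{1},\dots,w_{m})$ a weakly coupled system of parabolic inequalities whose coefficient matrix is bounded and has nonpositive off-diagonal entries, at which point Theorem \ref{1.4} closes the argument.

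The main step is the telescoping linearization. For each fixed $k$, using the splitting dictated by the mixed quasimonotone structure together with Definition \ref{34}, I would write
\begin{equation*}
\tilde{P}_{k}w_{k}\ge f_{k}(x,t,\bar{u}_{k},[\bar{\U}]_{i_{k}},[\underline{\U}]_{d_{k}})-f_{k}(x,t,\underline{u}_{k},[\underline{\U}]_{i_{k}},[\bar{\U}]_{d_{k}})
\end{equation*}
and expand the right-hand side as a telescoping sum in the differences $\bar{u}_{r}-\underline{u}_{r}$, introducing coefficients $c_{k}$, $c_{k,j_{s}}$, $D_{k,l_{s}}$ exactly as in the proof of Theorem \ref{21}. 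The Lipschitz hypothesis \eqref{s46} guarantees boundedness of every such coefficient on $V_{T}$, while the mixed quasimonotonicity of $\F$ on $<\underline{\boldsymbol{\psi}},\bar{\boldsymbol{\psi}}>$ forces the off-diagonal entries $a_{kj}$ ($k\ne j$), defined by the bookkeeping in \eqref{38}, to be nonpositive.

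This produces
\begin{equation*}
\tilde{P}_{k}w_{k}+\sum_{j=1}^{m}a_{kj}w_{j}\ge 0\text{ on }V_{T},\qquad w_{k}(x,0)\ge 0\text{ on }V,
\end{equation*}
for every $k$, where $\{a_{kj}\}_{k,j=1}^{m}$ is bounded and $a_{kj}\le 0$ for $k\ne j$. Invoking Theorem \ref{1.4} in its $(\ge 0)$ formulation then yields $w_{k}\ge 0$ on $V_{T}$ for all $k$, i.e., $\bar{\U}\ge\underline{\U}$ on $V_{T}$.

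I expect no substantive obstacle: the entire argument is a strict simplification of the proof of Theorem \ref{21} because no boundary term needs to be handled, so the only work is to transcribe the telescoping expansion verbatim and check the hypotheses of Theorem \ref{1.4}. The one place where care is needed is confirming that the sign convention used for the $D_{k,l_{s}}$ coefficients in the $[\U]_{d_{k}}$ block indeed translates into $a_{kj}\le 0$; this follows by combining the nonincreasing monotonicity of $f_{k}$ in those components with the sign of the denominator in the definition of $D_{k,l_{s}}$, exactly as in the derivation following \eqref{38}.
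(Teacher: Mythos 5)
Your proposal matches the paper's proof exactly: the paper also sets $w_{k}:=\bar{u}_{k}-\underline{u}_{k}$, cites the same telescoping coefficients $a_{kj}$ from \eqref{38} in the proof of Theorem \ref{21} to get $\tilde{P}_{k}w_{k}+\sum_{j=1}^{m}a_{kj}w_{j}\ge 0$ on $V_{T}$ with $w_{k}(x,0)\ge 0$, and then concludes via Theorem \ref{1.4}. No substantive difference.
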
 	

\begin{proof}
	Let $w_{k}:=\overline{u}_{k}-\underline{u}_{k}$; then $w_{k}$ satisfies $\tilde{P}_{k}w_{k}+\sum\limits_{j=1}^{m} a_{k{j}} w_{j} \ge 0$ on $V_{T}$, where $a_{kj}$ satisfies \eqref{38}. By Theorem \ref{1.4} , we see that $w_{k}\ge 0$ on $V_{T}$ for $1\le k \le m$.		
\end{proof}
  
 In the following, we give the upper and lower solutions method for the problem \eqref{14}.  
 \begin{Theorem}\label{sx2}
 	Let $(\bar{\U},\underline{\U})$ be pair of coupled upper and lower solutions of \eqref{14}, $\underline{\boldsymbol{\psi}}$ and $\bar{\boldsymbol{\psi}}$ be defined as Theorem \ref{212} and $\F$ satisfy the condition of Theorem \ref{212}. Assume that $\F$ has mixed qusimonotonicity property in $<\underline{\U},\bar{\U}>$, and $\F$ satisfies \eqref{s46} for all $(x,t)\in V_{T}$, $\U, \V \in <\underline{\U},\bar{\U}>$ and $1\le k \le m$. Then there exist two monotone sequences $\{\underline{\U}^{i}\}_{i=1}^{\infty}$ and $\{\overline{\U}^{i}\}_{i=1}^{\infty}$ satisfying 
 	\begin{equation}\label{321}
 		\underline{\U} \leqslant \underline{\U}^{i} \leqslant \underline{\U}^{i+1} \leqslant \bar{\U}^{i+1} \leqslant \bar{\U}^{i} \leqslant \bar{\U} \text { for all } i \geqslant 1, \lim\limits_{i\to \infty} \underline{\U}^{i}=\lim\limits_{i\to \infty} \overline{\U}^{i}=\U
 	\end{equation}
 	and $\U$ is the unique solution to \eqref{14} in the order interval $<\underline{\U},\overline{\U}>$.
 \end{Theorem}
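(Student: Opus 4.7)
The plan is to mimic the proof of Theorem \ref{sx} essentially verbatim, replacing the parabolic operator $P_k$ on $\bar{\Omega}$ by $\tilde{P}_k$ on $V$, dropping the boundary operator $B_k$, and invoking the comparison principles for the graph problem (Theorems \ref{1.4} and \ref{212}) instead of their counterparts for $\bar{\Omega}$. Since $V$ is finite, for any fixed $\V,\W\in\langle\underline{\U},\bar{\U}\rangle$ the linear initial value problem
\begin{equation*}
\tilde{P}_{k} u_{k}+M u_{k}=f_{k}\bigl(x,t,v_{k},[\V]_{i_{k}},[\W]_{d_{k}}\bigr)+Mv_{k},\qquad u_{k}(x,0)=\varphi_{k}(x),
\end{equation*}
is a linear system of ODEs with bounded, continuous data, so it has a unique global solution $u_{k}$. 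This lets me define the operator $\U=\mathcal{F}(\V,\W)$ and the iterates
\begin{equation*}
\underline{\U}^{1}=\mathcal{F}(\underline{\U},\bar{\U}),\quad \bar{\U}^{1}=\mathcal{F}(\bar{\U},\underline{\U}),\quad \underline{\U}^{i+1}=\mathcal{F}(\underline{\U}^{i},\bar{\U}^{i}),\quad \bar{\U}^{i+1}=\mathcal{F}(\bar{\U}^{i},\underline{\U}^{i}).
\end{equation*}

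Next I would verify the monotone ordering \eqref{321} by induction. The base cases $\underline{\U}\le\underline{\U}^{1}$, $\bar{\U}^{1}\le\bar{\U}$, and $\underline{\U}^{1}\le\bar{\U}^{1}$ follow by applying Theorem \ref{1.4} to the differences $\bar{u}_{k}-\bar{u}_{k}^{1}$, $\underline{u}_{k}^{1}-\underline{u}_{k}$, $\bar{u}_{k}^{1}-\underline{u}_{k}^{1}$ respectively, after using the mixed quasimonotonicity of $\F$ and the defining inequalities \eqref{315}. For the inductive step, the same calculation as in the proof of Theorem \ref{sx} shows that $\bar{u}_{k}^{i}-\bar{u}_{k}^{i+1}$ and $\underline{u}_{k}^{i+1}-\underline{u}_{k}^{i}$ satisfy the hypotheses of Theorem \ref{1.4} (with zero initial data), yielding the nested chain $\underline{\U}^{i}\le\underline{\U}^{i+1}\le\bar{\U}^{i+1}\le\bar{\U}^{i}$.

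With monotonicity in hand, the pointwise limits $\tilde{\U}:=\lim_{i\to\infty}\underline{\U}^{i}$ and $\hat{\U}:=\lim_{i\to\infty}\bar{\U}^{i}$ exist and satisfy $\underline{\U}\le\tilde{\U}\le\hat{\U}\le\bar{\U}$. To show they solve \eqref{14}, I would rewrite the iteration in integral form
\begin{equation*}
\bar{u}_{k}^{i+1}(x,t_{2})-\bar{u}_{k}^{i+1}(x,t_{1})=\int_{t_{1}}^{t_{2}}\bigl[\tilde{d}_{k}\Delta_{V_{k}}\bar{u}_{k}^{i+1}-M\bar{u}_{k}^{i+1}+f_{k}(x,t,\bar{u}_{k}^{i},[\bar{\U}^{i}]_{i_{k}},[\underline{\U}^{i}]_{d_{k}})+M\bar{u}_{k}^{i}\bigr]\,dt,
\end{equation*}
pass $i\to\infty$ using the dominated convergence (everything is uniformly bounded on the finite set $V$), and then differentiate in $t$ to recover the PDE; the analogous identity holds for $\tilde{\U}$. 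To see $\tilde{\U}\equiv\hat{\U}$, I apply Theorem \ref{1.4} to $\mathbf{e}:=\hat{\U}-\tilde{\U}$: the Lipschitz assumption \eqref{s46} implies $\tilde{P}_{k}e_{k}\le M'e_{k}+M'\sum_{j\ne k}e_{j}$ with $e_{k}(x,0)=0$, forcing $\mathbf{e}\equiv 0$. Finally, for uniqueness in $\langle\underline{\U},\bar{\U}\rangle$, any solution $\W$ in this interval equals $\mathcal{F}(\W,\W)$, and an induction identical to the monotonicity argument shows $\underline{\U}^{i}\le\W\le\bar{\U}^{i}$ for every $i$, so $\W$ coincides with the common limit.

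Because the proof is a direct transcription of the argument for Theorem \ref{sx}, I do not anticipate a genuine obstacle; the only point to check carefully is that every use of Theorem \ref{bj} in the Dirichlet/Neumann case can be replaced by Theorem \ref{1.4}, which holds without any boundary data precisely because $V$ is an unbordered finite graph. The existence step for the linearized problem is also easier here: it is a system of linear ODEs on a finite-dimensional space, so there is no need for the delicate well-posedness discussion that would accompany the subgraph version.
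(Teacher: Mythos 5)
Your proposal is correct and follows essentially the same route as the paper, which itself proves Theorem \ref{sx2} by setting up the iteration operator $\mathcal{F}$ and then invoking ``a similar discussion as in the proof of Theorem \ref{sx}'' with Theorem \ref{1.4} in place of Theorem \ref{bj}. Your write-up simply makes explicit the details (ODE well-posedness on the finite vertex set, the inductive monotonicity, the integral-form limit passage, and the Lipschitz argument for $\hat{\U}\equiv\tilde{\U}$) that the paper leaves to the reader.
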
 	
 
 \begin{proof}
 	By Theorem \ref{212}, we have $\underline{\U}\le \bar{\U}$. It is well-known that for any given $\V,\W\in <\underline{\U},\overline{\U}>$ , the problem 
 	\begin{equation}
 		\begin{cases} \tilde{P}_{k} u_{k}+M u_{k}=f_{k}\left(x, t, v_{k},[\V]_{\mathrm{i}_{k}},[\mathbf{w}]_{\mathrm{d}_{k}}\right)+M v_{k} & \text { on } V_{T}, \\ u_{k}(x, 0)=\varphi_{k}(x) & \text { on } V \end{cases}
 	\end{equation}
 	admits a unique solution $u_{k}$, $1\le k \le m$.  Thus we can define an operator $\mathcal{F}$ by $\U=\mathcal{F}(\V,\mathbf{w})$ and 
 	\begin{equation}
 		\underline{\U}^{1}=\mathcal{F}(\underline{\U}, \bar{\U}), \quad \bar{\U}^{1}=\mathcal{F}(\bar{\U}, \underline{\U}), \quad \underline{\U}^{i+1}=\mathcal{F}\left(\underline{\U}^{i}, \bar{\U}^{i}\right),~ \text { and } \bar{\U}^{i+1}=\mathcal{F}\left(\bar{\U}^{i}, \underline{\U}^{i}\right).
 	\end{equation}
 
 By virtue of Theorem \ref{1.4}, by a similar discussion as in the proof of Theorem \ref{sx}, we obtain \eqref{321}.
 \end{proof}

\section{Stability of nontrivial steady-state solutions}
In this section, we give the proof of Theorems \ref{1.1}-\ref{m2}.
 \subsection{The proof of Theorem \ref{1a}}	
  	 	Let $f_{1}(u,v)=u(a_1-b_1u-c_1v)$ and $f_{2}(u,v)=v(a_2-b_2u-c_2v)$. Clearly, $\F(u,v)=(f_{1},f_{2})$ is qusimonotone nonincreasing in $\R^{+}\times \R^{+}$. Take $$M_{1}=\max\{\frac{a_1}{b_1}, \max_{\bar{\Omega}} u(x,0) \} ~\text{and}~ M_{2}=\max\{\max_{\bar{\Omega}}v(x,0), \frac{a_2}{c_2} \}.$$ In view of $u_{0}\ge 0$ and $v_{0}\ge 0$, it is easy to check that $(M_1, M_2)$ and $(0,0)$ are upper and lower solutions to \eqref{20220904-3} whether $B_{i}$ $(i=1,2)$ satisfy \eqref{1,4} or \eqref{1,5}. By Theorem \ref{sx}, \eqref{20220904-3} admits a unique solution $(u^{N},v^{N})$ so that $(0,0) \le (u^{N},v^{N}) \le (M_1, M_2)$ on $D:=\bar{\Omega}\times[0,+\infty)$ when $B_{i}$ $(i=1,2)$ satisfy \eqref{1,4}, and a unique solution $(u^{D},v^{D})$ so that $(0,0) \le (u^{D},v^{D}) \le (M_1, M_2)$ on $D$ when $B_{i}$ $(i=1,2)$ satisfy \eqref{1,5}. Furthermore, if ${u}_{0}\not\equiv 0$ and ${v}_0\not \equiv0$ on $\bar{ \Omega}$, then by Theorem \ref{bj}, we see that 
  	 	\begin{equation}\label{z}
  	 		u>0~ \text{and } v>0~ \text{on}~ \bar{\Omega}\times(0,+\infty)
  	 	\end{equation} 
   	when $B_{i}$ $(i=1,2)$ satisfy \eqref{1,4}, and that 
   	\begin{equation}\label{z1}
   		u>0~ \text{and } v>0~ \text{on}~ {\Omega}\times(0,+\infty)
   	\end{equation} 
  when $B_{i}$ $(i=1,2)$ satisfy \eqref{1,5}.
   	
   	We now complete the proof of Theorem \ref{1.1}.
  
 \subsection{The competition system with initial value and Neumann boundary} 
 In this subsection, we proof Theorem \ref{m} via Theorems \ref{4.1}-\ref{4.4}. 
  
 Consider the problem \eqref{20220904-3} with $B_{i}$ $(i=1,2)$ satisfying \eqref{1,4} and denote $(u,v):=(u^{N},v^{N})$. By \eqref{z} we have 
  	\begin{equation}\label{431}
  		\left\{\begin{array}{lll}
  			u_{t}-d_{1}\Delta_{\Omega_1} u\le u(a_1-b_1u), & x \in \Omega, & t>0, \\
  			v_{t}-d_{2}\Delta_{\Omega_2} v\le v(a_2-c_2v), & x \in \Omega, & t>0, \\
  			\frac{\partial u}{\partial_{\Omega_1} n}=\frac{\partial v}{\partial_{\Omega_2} n}=0, & x \in \partial \Omega, & t>0, \\
  			u=u_{0} \geqslant 0, \quad v=v_{0} \geqslant 0, & x \in \bar{\Omega}, & t=0,
  		\end{array}\right.
  	\end{equation}
 Consider the following problem
\begin{equation}\label{2}
	\left\{\begin{array}{lll}
		U_{t}-d_{i}\Delta_{\Omega_{i}} U= u(a_i-\beta_i U), & x \in \Omega, & t>0, \\
		\frac{\partial U}{\partial_{\Omega} n}=0, & x \in \partial \Omega, & t>0, \\
		U\equiv u_{i} , & x \in \bar{\Omega}, & t=0,
	\end{array}\right.
\end{equation}
where $i=1,2$, $u_{1}\equiv u_{0}$, $\beta_1\equiv b_1$, $u_{2}\equiv v_{0}$ and $\beta_2\equiv c_2$.
By \cite[Theorem 5.6]{HL}, we see that the problem \eqref{2} admits a unique nonnegative global solution $U(x,t)$ with $i=1$, and a unique nonnegative global solution $V(x,t)$ with $i=2$. By \cite[Theorem 4.1]{HL}, we see that $u(x,t)\le U(x,t)$ and $v(x,t)\le V(x,t)$ for $x\in \bar{\Omega}$, $t>0$. By \cite[Theorem 5.6]{HL} , we see that 
\begin{equation}\label{20220922-3}
	\lim\limits_{t\to \infty}U(x,t) =\frac{a_1}{b_1}\;\mbox{and}\;
	\lim\limits_{t\to \infty}V(x,t) =\frac{a_2}{c_2}.
\end{equation}
 Thus, we know that  
 \begin{equation}\label{20220924-4}
 	\limsup\limits_{t\to \infty}u(x,t) \le \frac{a_1}{b_1}~ \mbox{and}~  \limsup\limits_{t\to \infty}v(x,t) \le \frac{a_2}{c_2}.
 \end{equation} 

We next characterize the stability domain of the constant solutions of \eqref{20220904-3} with $B_{i}$ $(i=1,2)$ satisfying \eqref{1,4}. 

\begin{Theorem}\label{4.1}
	If $\frac{a_1}{a_2}< \frac{b_1}{b_2}$ and $\frac{a_1}{a_2}< \frac{c_1}{c_2}$, then $\lim\limits_{t\to \infty} (u(x,t),v(x,t))=(0,\frac{a_2}{c_2})$.
\end{Theorem}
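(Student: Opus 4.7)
The plan is to combine scalar logistic comparison on each equation with an iteration in the spirit of the monotone scheme of Section 3. By Theorem \ref{1.1} we have $u,v>0$ on $\bar\Omega\times(0,\infty)$, so dropping $-c_1 v$ in the $u$-equation yields $u_t-d_1\Delta_{\Omega_1}u\le u(a_1-b_1u)$; comparison with the scalar logistic problem (using \cite[Theorem 4.1]{HL} and then \cite[Theorem 5.6]{HL} for its limit) gives $\limsup_{t\to\infty}u(x,t)\le a_1/b_1$. Symmetrically, $\limsup v\le a_2/c_2$.

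Next, I exploit the hypothesis $a_1/a_2<b_1/b_2$, which is equivalent to $a_2-b_2(a_1/b_1)>0$. For every $\epsilon>0$, once $u\le a_1/b_1+\epsilon$ one has $v_t-d_2\Delta_{\Omega_2}v\ge v\bigl(a_2-b_2(a_1/b_1+\epsilon)-c_2v\bigr)$, so scalar comparison with the corresponding logistic subsolution (permissible since $v>0$ after some time) and then $\epsilon\to 0$ yield $\liminf v\ge(a_2b_1-a_1b_2)/(b_1c_2)=:\underline v_1>0$. Setting $\bar u_0:=a_1/b_1$, I iterate by
\begin{equation*}
\underline v_{n+1}:=\frac{a_2-b_2\bar u_n}{c_2},\qquad \bar u_{n+1}:=\frac{a_1-c_1\underline v_{n+1}}{b_1},
\end{equation*}
and prove by induction that whenever $\bar u_n>0$, $\limsup_{t\to\infty} u(x,t)\le\bar u_n$ and $\liminf_{t\to\infty} v(x,t)\ge\underline v_n$.

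Eliminating $\underline v_{n+1}$ gives the affine recursion $\bar u_{n+1}=A\bar u_n+B$ with $A=c_1b_2/(b_1c_2)$ and $B=(a_1c_2-a_2c_1)/(b_1c_2)<0$ (the sign of $B$ is precisely the second hypothesis $a_1/a_2<c_1/c_2$). Let $\xi=B/(1-A)$. A short case analysis shows that $\bar u_{n_0}\le 0$ at some finite $n_0$ regardless of $A$: if $A<1$ then $\xi<0$ and $\bar u_n\downarrow\xi$; if $A=1$ then $\bar u_n$ decreases by $|B|$ each step; if $A>1$, the first hypothesis $a_1b_2<a_2b_1$ forces $\bar u_0=a_1/b_1<\xi$, and hence $\bar u_n-\xi=A^n(\bar u_0-\xi)\to-\infty$. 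Combined with $u\ge 0$, the inequality $\bar u_{n_0}\le 0$ forces $u(x,t)\to 0$. Once $u\to 0$, sandwiching $v$ between solutions of $w_t-d_2\Delta_{\Omega_2}w=w(a_2\pm\epsilon-c_2 w)$ and invoking \cite[Theorem 5.6]{HL} once more gives $v\to a_2/c_2$. The main technical obstacle is the $\epsilon$-bookkeeping across the iteration—each induction step needs continuous dependence of the scalar logistic limit on its growth rate—together with the non-obvious case $A>1$, where the strict inequality $\bar u_0<\xi$ (driven by the first hypothesis) is exactly what propels the iteration below zero.
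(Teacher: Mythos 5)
Your proof is correct, but it follows a genuinely different route from the paper's. The paper, after establishing the same a priori bounds $\limsup u\le a_1/b_1$, $\limsup v\le a_2/c_2$, constructs in one shot an explicit pair of coupled upper--lower solutions depending only on $t$, namely $\bar{u}=\frac{(a_1+\epsilon)e^{-\beta(t-t_0)}}{b_1}$, $\underline{u}=\frac{\sigma e^{-q(t-t_0)}}{b_1}$, $\bar{v}=\frac{a_2+\epsilon e^{-\beta(t-t_0)}}{c_2}$, $\underline{v}=\frac{a_2-(a_2-\sigma)e^{-\beta(t-t_0)}}{c_2}$, verifies the four coupled differential inequalities for carefully tuned $\sigma,\beta,q$, and invokes the system comparison principle (Theorem \ref{21}); both barriers converge to $(0,a_2/c_2)$. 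You instead run a ``contracting rectangles'' iteration of purely scalar logistic comparisons, and the convergence of the scheme is governed by the affine recursion $\bar u_{n+1}=A\bar u_n+B$; your case analysis is right, and in particular the key computation in the case $A>1$ checks out: $\xi=\frac{a_2c_1-a_1c_2}{c_1b_2-b_1c_2}$ and the inequality $a_1/b_1<\xi$ is exactly equivalent to $a_1b_2<a_2b_1$. What the paper's approach buys is a single application of the coupled comparison theorem and an explicit exponential rate; what yours buys is that it never needs the system-level upper/lower solution machinery of Section 3 at all, only the scalar results of \cite{HL}, at the cost of the $\epsilon$-bookkeeping and the recursion analysis. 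Two small points you should make explicit: (i) the iteration remains legitimate because $\bar u_n$ is decreasing in all three cases, so $\bar u_n\le a_1/b_1<a_2/b_2$ and hence the effective growth rate $a_2-b_2(\bar u_n+\epsilon)$ of the logistic subsolution for $v$ stays positive at every stage; (ii) at the terminal step, rather than literally asserting $\limsup u\le\bar u_{n_0}\le 0$, one gets $\limsup u\le c_1\epsilon/b_1$ for every small $\epsilon>0$ from the comparison with a logistic equation of small positive rate, and then lets $\epsilon\to0$. Neither is a gap, just wording to tighten.
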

\begin{proof}	 	
In view of \eqref{20220924-4}, for any given $0<\epsilon<\min\{\frac{c_1a_2}{c_2}-a_1,\frac{b_1a_2}{b_2}-a_1\}$, we can find $t_0$ such that 
\begin{equation}\label{20220924-5}
	u(x,t)<\frac{a_1 + \epsilon}{b_1},\;v(x,t)<\frac{a_2 +\epsilon}{c_2}\;\mbox{ uniformly for}\;x\in \bar{\Omega},\;t\ge t_0.
\end{equation}
Note that $\frac{a_1}{a_2}< \frac{c_1}{c_2}$ and $\frac{a_1}{a_2}< \frac{b_1}{b_2}$, we can choose sufficiently small $\sigma>0$ so that 
\begin{equation}\label{1,}
	\min\{a_{2}-\frac{c_2}{c_1}(a_1 + \epsilon),a_{2}-\frac{b_2}{b_1}(a_1 + \epsilon),b_{1}u(x,t_0),	b_{1}\min_{x\in\Omega}u(x,t_0),c_2 \min_{x\in\Omega}v(x,t_0)\}>\sigma,
\end{equation}
Then we can choose a sufficiently small $\beta>0$ such that \begin{equation}\label{2,}
	\beta \le \frac{c_1}{c_2}\sigma+\epsilon, ~\beta(a_2 -\sigma)\le \sigma[a_2-\sigma-\frac{b_2}{b_1}(a_1 +\epsilon)],\text{and}~ a_2\ge \beta.
\end{equation}

Set 
\begin{equation}\label{q}
	q=\frac{c_1}{c_2}(a_2+\epsilon)+\sigma-a_1,
\end{equation}
$$ 	\bar{u}(t)=\frac{(a_1 +\epsilon)e^{-\beta(t-t_0)}}{b_1},~\underline{u}(t)=\frac{\sigma e^{-q(t-t_0)}}{b_1},
$$
$$
 	\bar{v}(t)=\frac{a_2+\epsilon e^{-\beta(t-t_0)}}{c_2}\;\mbox{and}\; \underline{v}(t)=\frac{a_2-(a_2-\sigma)e^{-\beta(t-t_0)}}{c_2}.
$$
  	Direct calculations yield that 
  	 \begin{equation} 
  		\begin{aligned}
  			\bar{u}\left(a_{1}-b_{1} \bar{u}-c_{1} \underline{v}\right)&=\frac{\left(a_{1}+\varepsilon\right) e^{-\beta\left(t-t_{0}\right)}}{b_{1}}\left[a_{1}-\left(a_{1}+\varepsilon\right) e^{-\beta\left(t-t_{0}\right)}-c_{1} \frac{a_{2}-\left(a_{2}-\sigma\right) e^{-\beta\left(t-t_{0}\right)}}{c_{2}}\right]\\
  			&=\frac{\left(a_{1}+\varepsilon\right) e^{-\beta\left(t-t_{0}\right)}}{b_{1}}\left\{a_{1}-c_{1} \frac{a_{2}}{c_{2}}+\left[c_{1} \frac{a_{2}-\sigma}{c_{2}}-\left(a_{1}+\varepsilon\right)\right] e^{-\beta\left(t-t_{0}\right)}\right\}\\
  			&\overset{\eqref{1,}}{\leqslant} \frac{\left(a_{1}+\varepsilon\right) e^{-\beta\left(t-t_{0}\right)}}{b_{1}}\left[a_{1}-c_{1} \frac{a_{2}}{c_{2}}+c_{1} \frac{a_{2}-\sigma}{c_{2}}-\left(a_{1}+\varepsilon\right)\right]\\
  			&=\frac{\left(a_{1}+\varepsilon\right) e^{-\beta\left(t-t_{0}\right)}}{b_{1}}\left(-\frac{c_{1}}{c_{2}} \sigma-\varepsilon\right)\overset{\eqref{2,}}{\leqslant}-\beta \frac{\left(a_{1}+\varepsilon\right) e^{-\beta\left(t-t_{0}\right)}}{b_{1}}\\
  			&=\bar{u}_{t}-d_1\Delta_{\Omega_1} \bar{u},
  		\end{aligned}
  	\end{equation} 
  \begin{equation}
  	\begin{aligned}
  		\underline{v}\left(a_{2}-b_{2} \bar{u}-c_{2} \underline{v}\right)&=\frac{a_{2}-\left(a_{2}-\sigma\right) e^{-\beta\left(t-t_{0}\right)}}{c_{2}}\left[a_{2}-\sigma-\frac{b_{2}}{b_{1}}\left(a_{1}+\varepsilon\right)\right] e^{-\beta\left(t-t_{0}\right)}\\
  		&\stackrel{\eqref{1,}}{\geqslant} \frac{\sigma}{c_{2}}\left[a_{2}-\sigma-\frac{b_{2}}{b_{1}}\left(a_{1}+\varepsilon\right)\right] e^{-\beta\left(t-t_{0}\right)}\\
  		&\stackrel{\text { \eqref{2,} }}{\geqslant} \frac{a_{2}-\sigma}{c_{2}} \beta e^{-\beta\left(t-t_{0}\right)}=\underline{v}_{t}-d_2 \Delta_{\Omega_2} \underline{v},
  	\end{aligned}
  \end{equation}	
  	
 \begin{equation}
 	\begin{aligned}
 		  \underline{u}\left(a_{1}-b_{1} \underline{u}-c_{1} \bar{v}\right) 
 		&=\frac{\sigma e^{-q\left(t-t_{0}\right)}}{b_{1}}\left[a_{1}-\sigma e^{-q\left(t-t_{0}\right)}-c_{1} \frac{a_{2}+\varepsilon e^{-\beta\left(t-t_{0}\right)}}{c_{2}}\right] \\
 		& \geqslant \frac{\sigma e^{-q\left(t-t_{0}\right)}}{b_{1}}\left[a_{1}-\sigma-\frac{c_{1}}{c_{2}}\left(a_{2}+\varepsilon\right)\right] \\
 		&=\frac{-q\sigma e^{-q(t-t_0)}}{b_1} \overset{\eqref{q}}{\geqslant} \underline{u}_{t}-d_1\Delta_{\Omega_1} \underline{u},
 	\end{aligned}
 \end{equation} 	
  	
  	\begin{equation}
  		\begin{aligned}
  			\bar{v}\left(a_{2}-b_{2} \underline{u}-c_{2} \bar{v}\right) &=\frac{a_{2}+\varepsilon e^{-\left(t-t_{0}\right)}}{c_{2}}\left[-b_{2} \frac{\sigma e^{-q \left(t-t_{0}\right)}}{b_{1}}-\varepsilon e^{-\beta\left(t-t_{0}\right)}\right] \\
  			& \leqslant-\frac{a_{2}}{c_{2}} \varepsilon e^{-\beta\left(t-t_{0}\right)} \\
  			& \overset{\eqref{2,}}{\leqslant} -\frac{\varepsilon}{c_{2}} \beta e^{-\beta\left(t-t_{0}\right)}=\bar{v}_{t}-d_2 \Delta_{\Omega_2} \bar{v},
  		\end{aligned}
  	\end{equation}
for $t\ge t_0$ and $x\in \bar{\Omega}$, and $B\bar{u}=B\bar{v}=B\underline{u}=B\underline{v}=0$ for $t\ge t_0$ and $x\in \partial{\Omega}$. By \eqref{1,}, we see that  $$\bar{u}(t_0)=\frac{a_1+\epsilon}{b_1}> u(x,t_0)> \underline{u}(t_0)=\frac{\sigma}{b_1},$$ $$\bar{v}(t_0)=\frac{a_2+\epsilon}{c_2}>v(x,t_0)>\underline{v}(t_0)=\frac{\sigma}{c_2}.$$	Thus, by Definition \ref{3}, $(\bar{u},\bar{v})$, $(\underline{u},\underline{v})$ is a pair of coupled upper and lower solutions to \eqref{41}. By Theorem \ref{21}, we have $(\underline{u},\underline{v})\le(u,v) \le(\bar{u},\bar{v})$ on $\bar{\Omega}\times [t_0,+\infty)$. Since
 \begin{equation}
	0=\lim\limits_{t\to \infty} \underline{u}(t)= \lim\limits_{t\to \infty}\bar{u}(t)=0,
\end{equation}
and
  \begin{equation}
 	\frac{a_2}{c_2}=\lim\limits_{t\to \infty} \underline{v}(t)= \lim\limits_{t\to \infty}\bar{v}(t)=\frac{a_2}{c_2},
 \end{equation} 	
this implies that $\lim\limits_{t\to +\infty}u(x,t)=0~ \text{and} \lim\limits_{t \to +\infty}v(x,t)=\frac{a_2}{c_2}$. 
\end{proof}

\begin{Theorem}\label{4.2}
	If $\frac{b_1}{b_2}<\frac{a_1}{a_2},$ $\frac{c_1}{c_2}<\frac{a_1}{a_2}$, then 
	\begin{equation}
	\lim\limits_{t\to \infty} (u(x,t),v(x,t))=(\frac{a_1}{b_1},0).
	\end{equation}
\end{Theorem}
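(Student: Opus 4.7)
The plan is to mimic the proof of Theorem~\ref{4.1} with the roles of $u$ and $v$ (and of the ratios involving $b_i$ and $c_i$) interchanged, producing the limit $(a_1/b_1,0)$ instead of $(0,a_2/c_2)$. The \emph{a priori} upper bound \eqref{20220924-4} still gives a $t_0>0$ beyond which $u(x,t)<(a_1+\epsilon)/b_1$ and $v(x,t)<(a_2+\epsilon)/c_2$ uniformly on $\bar\Omega$. The hypotheses rearrange to $a_2<\tfrac{b_2}{b_1}a_1$ and $a_2<\tfrac{c_2}{c_1}a_1$, which together with the strict positivity of $u(\cdot,t_0),v(\cdot,t_0)$ on $\bar\Omega$ supplied by Theorem~\ref{1.1}(i) allow me to fix small $\epsilon,\sigma,\beta>0$ satisfying the mirror images of \eqref{1,}--\eqref{2,}.

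For $t\ge t_0$, I would take the space-independent candidates
\begin{equation*}
\bar u(t)=\frac{a_1+\epsilon e^{-\beta(t-t_0)}}{b_1},\qquad
\underline u(t)=\frac{a_1-(a_1-\sigma)e^{-\beta(t-t_0)}}{b_1},
\end{equation*}
\begin{equation*}
\bar v(t)=\frac{(a_2+\epsilon)e^{-\beta(t-t_0)}}{c_2},\qquad
\underline v(t)=\frac{\sigma e^{-q(t-t_0)}}{c_2},
\end{equation*}
with $q:=\tfrac{b_2}{b_1}(a_1+\epsilon)+\sigma-a_2>0$ (positive by the first hypothesis for $\epsilon,\sigma$ small enough). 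Because these depend only on $t$, the graph Laplacians $\Delta_{\Omega_1},\Delta_{\Omega_2}$ annihilate them on $\Omega$ and the Neumann condition \eqref{1,4} holds trivially on $\partial\Omega$. The nonlinearity $\F(u,v)=(u(a_1-b_1u-c_1v),v(a_2-b_2u-c_2v))$ is quasi-monotone nonincreasing on $(\R^+)^2$, matching the coupling required by Definition~\ref{3}: the upper solution for one species is paired with the lower solution for the other.

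The four differential inequalities of Definition~\ref{3} would then be verified by direct substitution, each reducing to a polynomial in $e^{-\beta(t-t_0)}$ controlled by the bookkeeping constants---exactly parallel to the four displayed computations in the proof of Theorem~\ref{4.1}, but with $a_1\leftrightarrow a_2$, $b_i\leftrightarrow c_i$ and $u\leftrightarrow v$ swapped. The pointwise initial comparisons at $t=t_0$ hold since $\sigma$ has been chosen smaller than $b_1\min_{\bar\Omega}u(\cdot,t_0)$ and $c_2\min_{\bar\Omega}v(\cdot,t_0)$, while the upper comparisons follow from the uniform bounds noted above. Theorem~\ref{21} then squeezes $(u,v)$ between $(\underline u,\underline v)$ and $(\bar u,\bar v)$ on $\bar\Omega\times[t_0,\infty)$, and sending $t\to\infty$ yields $u(x,t)\to a_1/b_1$ and $v(x,t)\to 0$. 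The main obstacle is the algebraic bookkeeping: selecting a consistent set of inequalities on $(\epsilon,\sigma,\beta,q)$ that simultaneously enforces all four differential inequalities and both initial comparisons. Once the mirror of \eqref{1,}--\eqref{q} is written down, each check collapses to a one-line estimate via the definition of $q$, but the constants must be pinned down in the correct order so the system is consistent.
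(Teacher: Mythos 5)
Your proposal is correct and follows essentially the same route as the paper's own proof: the identical space-independent upper/lower solution pairs (the paper uses $e^{-a_1(t-t_0)}$ in place of your $e^{-\beta(t-t_0)}$ in $\bar u$, an immaterial variation since $\beta\le a_1$), the same definition of $q$, and the same appeal to the comparison theorem to squeeze $(u,v)$ toward $(a_1/b_1,0)$. Nothing further is needed.
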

\begin{proof}
Due to \eqref{20220924-4},	for any  $0<\epsilon<\min\{\frac{a_1c_2}{c_1}-a_2,\frac{a_1b_2}{b_1}-a_2\}$, we can find $t_0>0$ such that \eqref{20220924-5} holds. Then we can find sufficiently small $\sigma>0$ such that 
\begin{equation}\label{2,1}
	\sigma<\min\{a_1-\frac{c_1}{c_2}(a_2+\epsilon),~a_1-\frac{b_1}{b_2}(a_2+\epsilon),~c_2 \min_{\bar{\Omega}} v(x,t_0),~b_1 \min_{\bar{\Omega}} u(x,t_0)\}.
\end{equation}
And we can choose $0<\beta$ such that
\begin{equation}\label{2,2}
	\beta\le \frac{b_2}{b_1} \sigma+\epsilon,~(a_1-\sigma)\beta\le \sigma[a_1-\sigma-\frac{c_1}{c_2}(a_2+\epsilon)].
\end{equation}

Define $$
	\bar{u}(t)=\frac{a_1+\epsilon e^{-a_1(t-t_0)}}{b_1},~\underline{u}(t)=\frac{a_1-(a_1-\sigma)e^{-\beta(t-t_0)}}{b_1},$$
	$$\bar{v}(t)=\frac{(a_2+\epsilon)e^{-\beta(t-t_0)}}{c_2}~\mbox{and}\;\underline{v}(t)=\frac{\sigma e^{-q(t-t_0)}}{c_2}
$$
where $q=\sigma +\frac{b_2}{b_1}(a_1+\epsilon) -a_2$.
Direct calculations show that
\begin{equation}
	\begin{aligned}
		\bar{u}\left( a_1-b_1\bar{u}-c_1 \underline{v}\right)&=\frac{a_1+\epsilon e^{-a_1(t-t_0)}}{b_1}\left[ -\epsilon e^{-a_1(t-t_0)}-\frac{c_1}{c_2} \sigma e^{-q(t-t_0)} \right] \\
		&\le -\frac{a_1}{b_1}\epsilon e^{-a_1(t-t_0)}=\bar{u}_{t}-d_1 \Delta_{\Omega_1} \bar{u},  
	\end{aligned}
\end{equation} 

\begin{equation}
	\begin{aligned}
		\underline{u}\left(a_{1}-b_{1} \underline{u}-c_{1} \bar{v}\right) &=\frac{a_{1}-\left(a_{1}-\sigma\right) e^{-\beta\left(t-t_{0}\right)}}{b_{1}}\left\{a_{1}-\left[a_{1}-\left(a_{1}-\sigma\right) e^{-\beta\left(t-t_{0}\right)}\right]-c_{1} \frac{\left(a_{2}+\epsilon\right) e^{-\beta\left(t-t_{0}\right)}}{c_{2}}\right\} \\
		&=\frac{a_{1}-\left(a_{1}-\sigma\right) e^{-\beta\left(t-t_{0}\right)}}{b_{1}}\left[\left(a_{1}-\sigma\right) e^{-\beta\left(t-t_{0}\right)}-\frac{c_{1}}{c_{2}}\left(a_{2}+\epsilon\right) e^{-\beta\left(t-t_{0}\right)}\right] \\
		&\overset{\eqref{2,1}}{\geqslant}  \frac{\sigma}{b_{1}}\left[a_{1}-\sigma-\frac{c_{1}}{c_{2}}\left(a_{2}+\epsilon\right)\right] e^{-\beta\left(t-t_{0}\right)} \\
		&\overset{\eqref{2,2}}{\geqslant}  \frac{a_{1}-\sigma}{b_{1} }\beta e^{-\beta\left(t-t_{0}\right)}=\underline{u}_{t}-d_1 \Delta_{\Omega_1} \underline{u},
	\end{aligned}
\end{equation}
\begin{equation}
	\begin{aligned}
		\bar{v}\left(a_{2}-b_{2} \underline{u}-c_{2} \bar{v}\right) &=\frac{\left(a_{2}+\epsilon\right) e^{-\beta\left(t-t_{0}\right)}}{c_{2}}\left[a_{2}-b_{2} \frac{a_{1}}{b_{1}}+\frac{b_{2}}{b_{1}}\left(a_{1}-\sigma\right) e^{-\beta\left(t-t_{0}\right)}-\left(a_{2}+\epsilon\right) e^{-\beta\left(t-t_{0}\right)}\right] \\
		&\overset{\eqref{2,1}}{\leqslant}  \frac{\left(a_{2}+\epsilon\right) e^{-\beta\left(t-t_{0}\right)}}{c_{2}}\left(-\frac{b_{2}}{b_{1}} \sigma-\epsilon\right) \\
		&\overset{\eqref{2,2}}{\leqslant}-\beta \frac{a_{2}+\epsilon}{c_{2}} e^{-\beta\left(t-t_{0}\right)}=\bar{v}_{t}-d_2 \Delta_{\Omega_2} \bar{v},
	\end{aligned}
\end{equation}
\begin{equation}
	\begin{aligned}
		\underline{v}(a_2-b_2\bar{u}-c_2 \underline{v})&= \frac{\sigma e^{-q(t-t_0)}}{c_2}\left\lbrace a_2-\frac{b_2}{b_1}(a_1+\epsilon e^{-a_1(t-t_0)})-\sigma e^{-q(t-t_0)} \right\rbrace \\		
	&\ge	\frac{\sigma e^{-q(t-t_0)}}{c_2}[a_2-\frac{b_2}{b_1}(a_1+\epsilon)-\sigma] \\
	&	= \frac{\sigma} {c_2}(-q)e^{-q(t-t_0)}=\underline{v}_{t}-d_2 \Delta_{\Omega_2} \underline{v},
	\end{aligned}
\end{equation}
for $t\ge t_0$ and $x\in \bar{\Omega}$, and $B_1 \bar{u}=B_2\bar{v}=B_1\underline{u}=B_2\underline{v}=0$ for $t\ge t_0$ and $x\in \partial{\Omega}$. By \eqref{2,1}, we see that  $$\bar{u}(t_0)=\frac{a_1+\epsilon}{b_1}> u(x,t_0)> \underline{u}(t_0)=\frac{\sigma}{b_1}, \bar{v}(t_0)=\frac{a_2+\epsilon}{c_2}>v(x,t_0)>\underline{v}(t_0)=\frac{\sigma}{c_2}.$$	Thus, by Definition \ref{3}, $(\bar{u},\bar{v})$, $(\underline{u},\underline{v})$ is a pair of coupled upper and lower solutions to \eqref{41}. By Theorem \ref{21}, we have $(\underline{u},\underline{v})\le(u,v) \le(\bar{u},\bar{v})$ on $\bar{\Omega}\times [t_0,+\infty)$. Since 
\begin{equation}
	\frac{a_1}{b_1}=\lim\limits_{t\to \infty} \underline{u}(t)= \lim\limits_{t\to \infty}\bar{u}(t)=\frac{a_1}{b_1},
\end{equation}
and 
\begin{equation}
	0=\lim\limits_{t\to \infty} \underline{v}(t)= \lim\limits_{t\to \infty}\bar{v}(t)=0,
\end{equation} 	
we see that $\lim\limits_{t\to +\infty}u(x,t)=\frac{a_1}{b_1}~ \text{and} \lim\limits_{t \to +\infty}v(x,t)=0$. 
\end{proof}

\begin{Theorem}\label{4.3}
If	$\frac{c_{1}}{c_{2}}<\frac{a_{1}}{a_{2}}<\frac{b_{1}}{b_{2}}$,
then 
$$\lim\limits_{t\to +\infty}\left( u(x,t),v(x,t) \right) =(\xi,\eta).$$
\end{Theorem}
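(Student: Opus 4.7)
My plan is to follow the pattern of the proofs of Theorems \ref{4.1} and \ref{4.2}, but since the limit $(\xi,\eta)$ lies strictly in the interior of the positive cone, no single pair of explicit exponentially decaying upper and lower solutions will suffice; instead I will run an iterative squeezing scheme on top of the already-established bounds \eqref{20220924-4}. The starting point is $\limsup_{t\to\infty}u\le a_1/b_1$ and $\limsup_{t\to\infty}v\le a_2/c_2$. Given any small $\varepsilon>0$, choose $t_\varepsilon$ so that $u(x,t)<a_1/b_1+\varepsilon$ and $v(x,t)<a_2/c_2+\varepsilon$ for $t\ge t_\varepsilon$. Plugging the bound on $u$ into the $v$-equation gives $v_t-d_2\Delta_{\Omega_2}v\ge v\bigl(a_2-b_2(a_1/b_1+\varepsilon)-c_2 v\bigr)$, and the hypothesis $a_1/a_2<b_1/b_2$ makes the coefficient $a_2-b_2 a_1/b_1$ strictly positive. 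Comparing against the spatially constant solution of the corresponding scalar logistic ODE (which trivially satisfies the Neumann condition $B_2=0$), exactly as done in the proofs of Theorems \ref{4.1} and \ref{4.2}, and letting $\varepsilon\to0$, yields $\liminf_{t\to\infty}v\ge\underline{v}_1:=(a_2-b_2 a_1/b_1)/c_2>0$. Symmetrically, the condition $c_1/c_2<a_1/a_2$ gives $\liminf_{t\to\infty}u\ge\underline{u}_1:=(a_1-c_1 a_2/c_2)/b_1>0$.

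Next, I feed these improved lower bounds back to sharpen the upper bounds, obtaining $\limsup u\le\bar u_1:=(a_1-c_1\underline{v}_1)/b_1$ and $\limsup v\le\bar v_1:=(a_2-b_2\underline{u}_1)/c_2$, and then iterate, producing four sequences $\{\underline{u}_n\},\{\bar u_n\},\{\underline{v}_n\},\{\bar v_n\}$ with $\underline{u}_n\le\liminf u\le\limsup u\le\bar u_n$ and the analogous inequalities for $v$. A short induction, using the weak-competition arithmetic, shows that $\{\underline{u}_n\},\{\underline{v}_n\}$ are nondecreasing and $\{\bar u_n\},\{\bar v_n\}$ are nonincreasing, and that all four stay in the positive cone. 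Hence they converge to limits $\underline{u}^*,\bar u^*,\underline{v}^*,\bar v^*$ satisfying
\[
b_1\underline{u}^*=a_1-c_1\bar v^*,\quad b_1\bar u^*=a_1-c_1\underline{v}^*,\quad c_2\underline{v}^*=a_2-b_2\bar u^*,\quad c_2\bar v^*=a_2-b_2\underline{u}^*.
\]
Subtracting the two $u$-equations and the two $v$-equations and eliminating gives $(b_1c_2-b_2c_1)(\bar u^*-\underline{u}^*)=0$. Since $c_1/c_2<a_1/a_2<b_1/b_2$ implies $\triangle=b_1c_2-b_2c_1>0$, we must have $\bar u^*=\underline{u}^*$ and $\bar v^*=\underline{v}^*$, and solving the resulting two-by-two system forces $(\underline{u}^*,\underline{v}^*)=(\xi,\eta)$. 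This gives $\lim_{t\to+\infty}(u(x,t),v(x,t))=(\xi,\eta)$.

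The main obstacle is the rigorous execution of the iteration. At each iterate one must (i) justify comparison with a scalar logistic lower/upper solution for $u$ (resp.\ $v$) while the other species still varies in space and time; this is where I would invoke Theorem \ref{sx} on a semi-infinite time interval with spatially constant coupled upper and lower solutions $(\bar u_n,\bar v_n),(\underline{u}_n,\underline{v}_n)$, whose Neumann boundary conditions $B_1=B_2=0$ hold automatically because they are constants on $\bar\Omega$; (ii) show that the positivity $\underline{u}_n,\underline{v}_n>0$ propagates, which is where the ordering $c_1/c_2<a_1/a_2<b_1/b_2$ is crucial; and (iii) track the $\varepsilon$ carefully, passing to the limit $\varepsilon\to0$ either after each step or once at the end using a diagonal argument. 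The algebraic conclusion that the squeezed limits coincide is the place where $\triangle>0$ is indispensable; without it, the iteration could stall at a nontrivial interval of limits.
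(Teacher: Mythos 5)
Your proposal is mathematically sound, but it takes a genuinely different route from the paper, and its motivating premise is mistaken: the paper \emph{does} prove Theorem \ref{4.3} with a single explicit pair of coupled upper and lower solutions. The trick you missed is to center the exponentials at the coexistence state rather than at the origin: the paper takes
$\bar{u}=\xi+\frac{(a_1+\epsilon-b_1\xi)}{b_1}e^{-q(t-t_0)}$, $\underline{u}=\xi-\frac{(b_1\xi-\sigma)}{b_1}e^{-q(t-t_0)}$ and the analogous pair for $v$, chooses $\sigma$ and then $q$ small enough (conditions \eqref{b} and \eqref{43}) so that the four coupled differential inequalities of Definition \ref{3} hold on $[t_0,\infty)$, and applies Theorem \ref{21} once; all four barriers converge to $\xi$ or $\eta$, which finishes the proof. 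Your alternative is the classical Zhou--Pao iteration of constant bounds, which is cited in the paper's bibliography but not used here: starting from \eqref{20220924-4}, you alternately feed the current $\limsup$ of one species into the other's logistic comparison problem, obtaining monotone sequences $\underline{u}_n\uparrow$, $\bar{u}_n\downarrow$, $\underline{v}_n\uparrow$, $\bar{v}_n\downarrow$ whose limits satisfy the four linear relations you wrote, and the hypothesis $c_1/c_2<a_1/a_2<b_1/b_2$ gives both the strict positivity that starts the iteration ($a_2-b_2a_1/b_1>0$, $a_1-c_1a_2/c_2>0$) and $\triangle>0$, which collapses the limits to $(\xi,\eta)$. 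This is correct: each improvement step is a scalar comparison (Theorem 4.1 of \cite{HL}) of $u$ or $v$ against a spatially constant, \emph{time-dependent} logistic solution whose initial value is $\min_{\bar\Omega}$ or $\max_{\bar\Omega}$ of the species at the switching time --- the positive minimum being supplied by Theorem \ref{1.1}(i) --- followed by $\varepsilon\to 0$. Be aware that the time-independent constants themselves only give invariance; the actual sharpening comes from the relaxing ODE solutions, so your parenthetical appeal to Theorem \ref{sx} with static constant barriers should be replaced by the logistic-ODE comparison you also describe. What the paper's one-shot construction buys is a single application of Theorem \ref{21} and an explicit exponential squeezing rate; what your iteration buys is that it avoids the simultaneous fine-tuning of $\epsilon$, $\sigma$ and $q$, and it isolates cleanly where $\triangle>0$ is used.
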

\begin{proof}
By \eqref{20220924-4} and  $\frac{c_{1}}{c_{2}}<\frac{a_{1}}{a_{2}}<\frac{b_{1}}{b_{2}},$ for any given $0<\epsilon<\min\{\frac{c_2a_1}{c_1}-a_2,\frac{a_2b_1}{b_1}-a_1\}$, we can choose $t_0$ so that \eqref{20220924-5} holds. Then there exists $\sigma>0$ such that
\begin{equation}\label{b}
	\sigma<\min \left\{b_1 \xi, c_2 \eta, c_{2} {\min\limits_{\bar{\Omega}} v(x,t_0)} , a_{2}-\frac{b_{2}}{b_{1}}\left(a_{1}+\epsilon\right), a_{1}-\frac{c_{1}}{c_{2}}\left(a_{2}+\epsilon\right), b_{1} {\min\limits_{\bar{\Omega}} u\left(x, t_{0}\right)} \right\} \text {. }
\end{equation}
Recall that $\xi,\eta$ are defined by \eqref{20220924}. Clearly, we can choose a sufficiently small $q<\min\{q_1,q_2,q_3,q_4\}$, where 
\begin{equation}\label{43}
	\begin{aligned}
		& q_1=\frac{\sigma\left[a_{2}-\sigma-\frac{b_{2}}{b_{1}}\left(a_{1}+\epsilon\right)\right]}{c_2 \eta-\sigma},
		q_2=\frac{b_{1} \xi \left(\frac{c_{1}}{c_{2}} \sigma+\epsilon\right)}{a_{1}+\epsilon-b_{1}\xi},\\
		&q_3=\frac{\sigma\left[a_{1}-\sigma-\frac{c_{1}}{c_{2}}\left(a_{2}+\epsilon\right)\right]}{b_1 \xi-\sigma}, \text { and } q_4=\frac{c_2\eta(\frac{b_2}{b_1}\sigma+\epsilon)}{a_{2}+\epsilon-c_2 \eta}.
	\end{aligned}
\end{equation}
Define 
\begin{equation}
	\begin{aligned}
		&\bar{u}(t)=\frac{b_{1} \xi+\left(a_{1}+\epsilon-b_{1} \xi\right) e^{-q\left(t-t_{0}\right)}}{b_{1}}, \underline{u}(t)=\frac{b_{1} \xi-\left(b_{1} \xi-\sigma\right) e^{-q\left(t-t_{0}\right)}}{b_{1}}, \\
		&\bar{v}(t)=\frac{c_{2} \eta+\left(a_{2}+\epsilon-c_{2} \eta\right) e^{-q\left(t-t_{0}\right)}}{c_{2}}, \underline{v}(t)=\frac{c_{2} \eta-\left(c_{2} \eta-\sigma\right) e^{-q\left(t-t_{0}\right)}}{c_{2}}.
	\end{aligned}
\end{equation}
Direct calculations show that 
\begin{equation}
	\begin{aligned}
		\bar{u}\left(a_{1}-b_{1} \bar{u}-c_{1} \underline{v}\right) &= \frac{b_{1}\xi+\left( a_{1}+\epsilon-b_{1}\xi \right) e^{-q\left(t-t_{0}\right)}}{b_{1}} 	\bigg	\{ a_{1}- b_{1} \xi-c_{1} \eta+ \\ &\left. \left.\left[\frac{c_{1}}{c_{2}} \left( c_{2} \eta-\sigma\right)-\left(a_{1}+\epsilon-b_{1} \xi\right) \right] e^{-q\left(t-t_{0}\right)}\right\} \right.\\
		& \leqslant \xi\left[\frac{c_{1}}{c_{2}} \left(c_{2}\eta-\sigma\right)-\left(a_{1}+\epsilon-b_{1} \xi\right)\right] e^{-q\left(t-t_{0}\right)}=-\xi\left(\frac{c_{1}}{c_{2}} \sigma+\epsilon\right) e^{-q\left(t-t_{0}\right)} \\
		&\overset{\eqref{43}}{\leqslant} -\frac{\left(a_{1}+\epsilon-b_{1} \xi\right)}{b_{1}} q e^{-q\left(t-t_{0}\right)}=\bar{u}_{t}-d_{1} \Delta_{\Omega_{1}} \bar{u},
	\end{aligned}
\end{equation}
\begin{equation}
	\begin{aligned}
		\bar{v}\left(a_{2}-b_{2} \underline{u}-c_{2} \bar{v}\right) &=\frac{c_{2} \eta+\left(a_{2}+\epsilon-c_{2} \eta\right) e^{-q\left(t-t_{0}\right)}}{c_{2}}\left(-\frac{b_{2}}{b_{1}} \sigma-\epsilon\right) e^{-q\left(t-t_{0}\right)} \\
		& \leqslant-\eta\left(\frac{b_{2}}{b_{1}} \sigma+\epsilon\right) e^{-q\left(t-t_{0}\right)} \overset{\eqref{43}}{\leqslant}-\left(a_{2}+\epsilon-c_{2} \eta\right) q e^{-q\left(t-t_{0}\right)} \\
		&=\bar{v}_{t}-d_{2} \Delta_ \Omega \bar{v},
	\end{aligned}
\end{equation}
\begin{equation}
	\begin{aligned}
		\underline{u}\left(a_{1}-b_{1} \underline{u}-c_{1} \bar{v}\right) &=\frac{b_{1} \xi-\left(b_{1} \xi-\sigma\right) e^{-q\left(t-t_{0}\right)}}{b_{1}} \bigg \{ a_{1}-\left[b_{1} \xi-\left(b_{1} \xi-\sigma\right) e^{-q\left(t-t_{0}\right)}\right] \\
		&\left.\left. -\frac{c_{1}}{c_{2}}\left[c_{2} \eta+\left(a_{2}+\varepsilon-c_{2} \eta\right) e^{-q\left(t-t_{0}\right)}\right]\right\} \right.\\
		&=\frac{b_{1} \xi-\left(b_{1} \xi-\sigma\right) e^{-q\left(t-t_{0}\right)}}{b_{1}}\left[a_{1}-\sigma-\frac{c_{1}}{c_{2}}\left(a_{2}+\epsilon\right)\right] e^{-q\left(t-t_{0}\right)} \\
		&\overset{\eqref{b}}{\geqslant}  \frac{\sigma}{b_{1}}\left[a_{1}-\sigma-\frac{c_{1}}{c_{2}}\left(a_{2}+\epsilon\right)\right] e^{-q\left(t-t_{0}\right)} \\
		&\overset{\eqref{43}}{\geqslant}  \frac{\left(b_{1} \xi-\sigma\right) q}{b_{1}} e^{-q\left(t-t_{0}\right)}=\underline{u}_{t}-d_1 \Delta_{\Omega_{1}} \underline{u},
	\end{aligned}
\end{equation}
\begin{equation}
	\begin{aligned}
		\underline{v}\left(a_{2}-b_{2} \overline{u}-c_{2} \underline{v}\right) &=\frac{c_{2} \eta-\left(c_{2}\eta-\sigma \right) e^{-q\left(t-t_{0}\right)}}{c_{2}}\left\{a_{2}-b_{2} \frac{b_{1} \xi+\left(a_{1}+\epsilon-b_{1} \xi\right) e^{-q\left(t-t_{0}\right)}}{b_{1}}\right.\\
		&\left.-\left[c_{2} \eta-\left(c_{2} \eta-\sigma\right) e^{-q\left(t-t_{0}\right)}\right]\right\} \\
		&\overset{\eqref{b}}{\geqslant}  \frac{\sigma}{c_{2}}\left[a_{2}-\frac{b_{2}}{b_{1}}\left(a_{1}+\epsilon\right)-\sigma\right] e^{-q\left(t-t_{0}\right)} \\
		&\overset{\eqref{43}}{\geqslant} \frac{c_{2} \eta-\sigma}{c_{2}} q e^{-q\left(t-t_{0}\right)}=\underline{v}_{t}-d_{2} \Delta_{\Omega_{2}} \underline{v},
	\end{aligned}
\end{equation}
for $t\ge t_0$ and $x\in \bar{\Omega}$, and $B_{1}\bar{u}=B_{2}\bar{v}=B_{1}\underline{u}=B_{2}\underline{v}=0$ for $t\ge t_0$ and $x\in \partial{\Omega}$. By \eqref{b}, we see that  $$\bar{u}(t_0)=\frac{a_1+\epsilon}{b_1}> u(x,t_0)> \underline{u}(t_0)=\frac{\sigma}{b_1}, \bar{v}(t_0)=\frac{a_2+\epsilon}{c_2}>v(x,t_0)>\underline{v}(t_0)=\frac{\sigma}{c_2}.$$	Thus, by Definition \ref{3}, $(\bar{u},\bar{v})$, $(\underline{u},\underline{v})$ is a pair of upper and lower solutions to \eqref{41}. By Theorem \ref{21}, we have $(\underline{u},\underline{v})\le(u,v) \le(\bar{u},\bar{v})$ on $\bar{\Omega}\times [t_0,+\infty)$. Since
\begin{equation}
	\xi=\lim\limits_{t\to \infty} \underline{u}(t)= \lim\limits_{t\to \infty}\bar{u}(t)=\xi,
\end{equation}
and
\begin{equation}
	\eta=\lim\limits_{t\to \infty} \underline{v}(t)= \lim\limits_{t\to \infty}\bar{v}(t)=\eta,
\end{equation} 	
this implies that $\lim\limits_{t\to +\infty}u(x,t)=\xi ~ \text{and} \lim\limits_{t \to +\infty}v(x,t)=\eta$. 	
\end{proof}

\begin{Theorem}\label{4.4}
	Suppose that $\frac{b_{1}}{b_{2}}<\frac{a_{1}}{a_{2}}<\frac{c_{1}}{c_{2}}$. If $\xi<u_{0}(x)<\frac{a_{1}}{b_{1}}$ and $0<v_{0}(x)<\eta$ on $\bar{ \Omega}$, then $\lim _{t \rightarrow \infty}(u(x, t), v(x, t))=\left(\frac{a_{1}}{b_{1}}, 0\right)$; If $0<u_0<\xi$ and $\eta<v_0(x)< \frac{a_2}{c_2}$ on $\bar{ \Omega}$, then $\lim _{t \rightarrow \infty}(u(x, t), v(x, t))=\left(0,\frac{a_{2}}{c_{2}} \right)$.  
\end{Theorem}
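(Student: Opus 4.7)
The plan is to follow the same template as in Theorems \ref{4.1}--\ref{4.3}: build a spatially constant, time-dependent pair of coupled upper and lower solutions to \eqref{20220904-3} (in the sense of Definition \ref{3}) whose four envelopes collapse onto the target semi-trivial state, and then quote the comparison principle Theorem \ref{21}. Note that the problem is invariant under the swap of species labels $(a_1,b_1,c_1,d_1,\tilde{\omega}^1,\mu^1)\leftrightarrow(a_2,c_2,b_2,d_2,\tilde{\omega}^2,\mu^2)$, which exchanges $\xi$ and $\eta$, preserves the bistable condition $\tfrac{b_1}{b_2}<\tfrac{a_1}{a_2}<\tfrac{c_1}{c_2}$, and maps the initial hypothesis of the first assertion onto that of the second. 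It therefore suffices to treat the first assertion.

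First I would certify invariance of the order rectangle $R=[\xi,\tfrac{a_1}{b_1}]\times[0,\eta]$ by testing the spatially constant pair $(\bar{u},\bar{v})\equiv(\tfrac{a_1}{b_1},\eta)$ and $(\underline{u},\underline{v})\equiv(\xi,0)$. Each constant is annihilated by the graph Laplacian, and each reaction expression vanishes identically at the equilibria $(\xi,\eta)$ or $(\tfrac{a_1}{b_1},0)$ (the remaining two being $\tfrac{a_1}{b_1}(a_1-a_1-0)=0$ and $0$), so Definition \ref{3} is satisfied; the strict hypothesis supplies $\xi\le u_0\le\tfrac{a_1}{b_1}$ and $0\le v_0\le\eta$. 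Theorem \ref{21} then yields $\xi\le u(x,t)\le\tfrac{a_1}{b_1}$ and $0\le v(x,t)\le\eta$ on $\bar{\Omega}\times[0,\infty)$.

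Next I would tighten these bounds, keeping $\bar{u}\equiv\tfrac{a_1}{b_1}$ and $\underline{v}\equiv 0$ (still valid for the same reason) and replacing the other two envelopes by
\begin{equation*}
\bar{v}(t)=(\eta-\epsilon)e^{-\gamma t},\qquad \underline{u}(t)=\tfrac{a_1}{b_1}-\bigl(\tfrac{a_1}{b_1}-\underline{u}_0\bigr)e^{-\gamma t},
\end{equation*}
with $\epsilon,\gamma>0$ small and the key calibration $\underline{u}_0=\xi+(c_2\epsilon+\gamma)/b_2$. Substituting into the inequalities of Definition \ref{3} and using $a_1=b_1\xi+c_1\eta$ and $a_2=b_2\xi+c_2\eta$, the $\bar{v}$-inequality becomes the telescoping identity $(K-\gamma)(1-e^{-\gamma t})\ge 0$, where $K:=\tfrac{b_2 a_1}{b_1}-a_2>0$ is positive precisely by the first bistable inequality $\tfrac{b_1}{b_2}<\tfrac{a_1}{a_2}$, while the $\underline{u}$-inequality reduces to $\gamma(\tfrac{a_1}{b_1}-\underline{u}_0)\le\underline{u}(t)\bigl[(c_1 b_2-b_1 c_2)\epsilon-b_1\gamma\bigr]/b_2$, whose right-hand side is positive precisely because $b_2 c_1>b_1 c_2$, i.e., the second bistable inequality $\tfrac{b_1}{b_2}<\tfrac{c_1}{c_2}$. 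The remaining requirements $\gamma<K$, $\underline{u}_0\le\min_{\bar{\Omega}}u_0$, and $\eta-\epsilon\ge\max_{\bar{\Omega}}v_0$ are arranged by first choosing $\epsilon$ small (using the strict initial gaps) and then $\gamma$ smaller still. With this pair in place, Theorem \ref{21} sandwiches $(u,v)$ between $(\underline{u}(t),\underline{v})$ and $(\bar{u},\bar{v}(t))$, and letting $t\to\infty$ delivers $(u,v)\to(\tfrac{a_1}{b_1},0)$.

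The main obstacle is the simultaneous calibration of $\epsilon,\gamma,\underline{u}_0$ so that all four differential inequalities hold uniformly on $[0,\infty)$: the two bistable strict inequalities must be used independently, one to make $K>0$ (so $\bar{v}$ may decay even while $\underline{u}$ has only just cleared $\xi$) and the other to open a nonempty window for $\underline{u}_0$ (so $\underline{u}$ can be forced upward despite the lingering competition from $\bar{v}$); the point of the ansatz above is that this calibration is essentially forced, with the $\bar{v}$-inequality collapsing to an exact identity plus a nonnegative remainder. Once the first assertion is established, the second follows verbatim under the species-label symmetry described at the outset, yielding the dual envelopes $\underline{u}\equiv 0$, $\bar{v}\equiv\tfrac{a_2}{c_2}$, $\bar{u}(t)=(\xi-\epsilon)e^{-\gamma t}$, and $\underline{v}(t)=\tfrac{a_2}{c_2}-\bigl(\tfrac{a_2}{c_2}-\underline{v}_0\bigr)e^{-\gamma t}$.
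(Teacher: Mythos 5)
Your proposal is correct, but it reaches the conclusion by a genuinely different route from the paper's proof of Theorem \ref{4.4}. The paper does \emph{not} extend the explicit exponential ansatz of Theorems \ref{4.1}--\ref{4.3} to the bistable case; instead it argues in the phase plane: it chooses a line segment $\overline{PQ}$ inside the wedge $R_1$ between the two nullclines (where $f_1>0>f_2$), extracts a uniform $\delta>0$ by continuity on that compact segment, lets $(\underline{u},\bar v)=(p(t),q(t))$ traverse $\overline{PQ}$ exponentially with rate $\varepsilon\le\delta\min\{|\xi_1-\xi_2|^{-1},|\eta_1-\eta_2|^{-1}\}$, pairs this with the static envelopes $\bar u\equiv a_1/b_1$, $\underline v\equiv 0$, applies Theorem \ref{21}, and finally sends the auxiliary parameter $\epsilon'$ to zero because the segment's endpoint $(\xi_2,\eta_2)$ only approximates $(a_1/b_1,0)$. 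You keep the same static envelopes and the same comparison principle but replace the segment-and-compactness step by an explicit calibration $\underline{u}_0=\xi+(c_2\epsilon+\gamma)/b_2$ against the nullcline identities $a_1=b_1\xi+c_1\eta$ and $a_2=b_2\xi+c_2\eta$; I verified that your $\bar v$-inequality does reduce to $(K-\gamma)(1-e^{-\gamma t})\ge 0$ with $K=b_2a_1/b_1-a_2>0$, and that your $\underline u$-inequality holds once $\gamma$ is small relative to $\epsilon(b_2c_1-b_1c_2)/b_1$, the binding instant being $t=0$. What your version buys is that the envelopes converge exactly to $(a_1/b_1,0)$ (no final limit in an auxiliary parameter) and that each strict bistable inequality is seen to do one identifiable job; what the paper's version buys is independence from any algebraic identity and a more transparent geometric mechanism (any segment in the invariant wedge works). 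Two minor remarks: your preliminary invariance of the rectangle $[\xi,a_1/b_1]\times[0,\eta]$ is correct but never used afterwards, since your refined envelopes dominate the initial data directly; and your reduction of the second assertion to the first via the label swap is valid and slicker than the paper's, which simply repeats the construction in the mirror region $R_2$.
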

\begin{proof}
	We first deal with the case that $\xi<u_{0}(x)<\frac{a_{1}}{b_{1}}$ and $0<v_{0}(x)<\eta$. Define 
	$$	R_{1}:=\left\{(x, y), 0<y< {\eta}, \frac{a_{2}-c_{2} y}{b_{2}}<x<\frac{a_{1}-c_{1} y}{b_{1}}\right\}.$$
	For any sufficiently small $\epsilon^{'}$ we may find  two points $P=\left(\xi_{1}, \eta_{1}\right),~ Q=\left(\xi_{2}, \eta_{2}\right)\in	R_{1}$ satisfying $\xi_{1}\le u_0$, $\eta_{1}\ge v_0$,   $\xi_{2}>\frac{a_1}{b_1}-\epsilon^{'}>\xi$, and $\eta_{2}<\epsilon^{'}$. Thus we can find $\delta>0$ so that \begin{equation}
		\min \left\{f_{1}(\xi, \eta) ;(\xi, \eta) \in \overline{P Q}\right\} \geqslant \delta,~\max \left\{f_{2}(\xi, \eta) ;(\xi, \eta) \in \overline{P Q} \right\}\leqslant-\delta, 
	\end{equation} 
where $\overline{PQ}$ denotes the straight line $\overline{PQ}$ with end points $P,Q$. Set 
$$
p(t)=\xi_{2}+\left(\xi_{1}-\xi_{2}\right) e^{-\varepsilon t}, q(t)=\eta_{2}+\left(\eta_{1}-\eta_{2}\right) e^{-\varepsilon t}~(t\ge 0),
$$
where $\varepsilon \leqslant \min \left\{\delta\left|\xi_{1}-\xi_{2} \right|^{-1}, \delta\left|\eta_{1}-\eta_{2}\right|^{-1} \right\} .$ Then we see that $(p,q)$ lies on $\overline{PQ}$ for $t\ge 0$. It is easy to check that 
\begin{equation}
	\begin{aligned}
		&p^{\prime}(t)=-\varepsilon\left(\xi_{1}-\xi_{2}\right) e^{-\varepsilon t} \leqslant \delta \leqslant f_{1}(p, q), \\
		&q^{\prime}(t)=-\varepsilon\left(\eta_{1}-\eta_{2}\right) e^{-\varepsilon t} \geqslant-\delta \geqslant f_{2}(p, q).
	\end{aligned}
\end{equation}
Let $\bar{u}=\frac{a_1}{b_1}$, $\bar{v}=q(t)$, $\underline{u}=p(t)$, and $\underline{v}=0$; then $(\bar{u},\bar{v})$ and $(\underline{u},\underline{v})$ satisfies \begin{equation}\label{2sx}
	\begin{cases}\bar{u}_{t}-d_1\Delta_{\Omega_1} \bar{u} \geqslant \bar{u}(a_1-b_1\bar{u}-c_1 \underline{v}), & x \in \Omega, \quad t>0, \\ \underline{v}_{t}-d_2 \Delta_{\Omega_2} \underline{v} \leqslant \underline{v}(a_2-b_2 \underline{v}-c_2 \bar{u}), & x \in \Omega, \quad t>0, \\ \underline{u}_{t}-d_1 \Delta_{\Omega_1} \underline{u} \leqslant \underline{u}(a_1-b_1 \underline{u}-c_1 \bar{v}), & x \in \Omega,\quad t>0, \\ 
		\bar{v}_{t}-d_2 \Delta_{\Omega_2} \bar{v} \geqslant \bar{v}(a_2-c_2\bar{v}-b_2 \underline{u}), & x \in \Omega,\quad t>0.\end{cases}
\end{equation} In view of $p(0)=\xi_{1} \leqslant u_{0} \leqslant \frac{a_{1}}{b_{1}}, \quad 0 \leqslant v_{0} \leqslant \eta_{1}=q(0)$, by Theorem \ref{21}, we see that 
\begin{equation}
	(p(t), 0) \leqslant(u(x, t), v(x, t)) \leqslant\left(\frac{a_{1}}{b_{1}}, q(t)\right), t>0, \quad x \in \bar{\Omega}.
\end{equation}
It follows that 
\begin{equation}
	\begin{aligned}
		&\frac{a_{1}}{b_{1}}-\epsilon^{'}<\xi_{2}=\lim _{t \rightarrow+\infty} p(t) \leqslant \liminf _{t \rightarrow+\infty} u(x, t) \leqslant \limsup _{t \rightarrow+\infty} u(x, t) \leqslant \frac{a_{1}}{b_{1}} \text {, }\\
		&0 \leqslant \liminf\limits_{t \rightarrow+\infty} v(x, t) \leq \limsup\limits _{t \rightarrow+\infty} v(x, t) \leq \lim\limits _{t \rightarrow+\infty} q(t)=\eta_{2}\le \epsilon^{'} \text {. }
	\end{aligned}
\end{equation}
Thus we get \begin{equation}
	\lim _{t \rightarrow+\infty}(u(x, t), v(x, t))=\left(\frac{a_{1}}{b_{1}}, 0\right)\;\mbox{uniformly for}\;x\in\bar{\Omega}.
\end{equation}

We next deal with another case. Define
$$R_{2}:=\left\{(x, y), 0<x<{\xi}, \frac{a_{1}-b_{1} x}{c_{1}}<y <\frac{a_{2}-b_{2} x}{c_{2}}\right\}.$$
For any sufficiently small $\epsilon_{0}>0$, we may
find two points $P=\left(\xi_{1}, \eta_{1}\right),~ Q=\left(\xi_{2}, \eta_{2}\right)\in R_{2} $
satisfying $u_0 \le \xi_{1}$, $\eta_{1} \le v_0$,  $\eta_{2}>\frac{a_2}{c_2}-\epsilon_0$, $\xi_{2}<\epsilon_0$. Thus we can find $\delta>0$ so that 
\begin{equation}
	\max \left\{f_{1}(\xi, \eta) ;(\xi, \eta) \in \overline{P Q}\right\} \leqslant -\delta,~\min \left\{f_{2}(\xi, \eta) ;(\xi, \eta) \in \overline{P Q} \right\}\geqslant \delta, 
\end{equation} 
where $\overline{PQ}$ denotes the straight line $\overline{PQ}$ with end points $P,Q$. Set 
$$
p(t)=\xi_{2}+\left(\xi_{1}-\xi_{2}\right) e^{-\varepsilon t}, q(t)=\eta_{2}+\left(\eta_{1}-\eta_{2}\right) e^{-\varepsilon t}~(t\ge 0),
$$
where $\varepsilon \leqslant \min \left\{\delta\left|\xi_{1}-\xi_{2} \right|^{-1}, \delta\left|\eta_{1}-\eta_{2}\right|^{-1} \right\} .$ Then we see that $(p,q)$ lies on $\overline{PQ}$ for $t\ge 0$. It is easy to check that 
\begin{equation}
	\begin{aligned}
		&p^{\prime}(t)=-\varepsilon\left(\xi_{1}-\xi_{2}\right) e^{-\varepsilon t} \geqslant -\delta \geqslant f_{1}(p, q), \\
		&q^{\prime}(t)=-\varepsilon\left(\eta_{1}-\eta_{2}\right) e^{-\varepsilon t} \leqslant \delta \leqslant f_{2}(p, q).
	\end{aligned}
\end{equation}
Let $\bar{u}=p(t)$, $\bar{v}=\frac{a_2}{c_2}$, $\underline{u}=0$, and $\underline{v}=q(t)$; then $(\bar{u},\bar{v})$ and $(\underline{u},\underline{v})$ satisfies \eqref{2sx}. By virtue of $ \eta_{1}=q(0) \leqslant v_{0} \leqslant \frac{a_{2}}{c_{2}},~ 0 \leqslant u_{0} \leqslant p(0)=\xi_{1}$, by Theorem \ref{21}, we see that 
\begin{equation}
	(0, q(t))) \leqslant(u(x, t), v(x, t)) \leqslant\left(p(t),\frac{a_{2}}{c_{2}} \right), t>0, \quad x \in \bar{\Omega}.
\end{equation}
It follows that 
\begin{equation}
	\begin{aligned}
		&0 \leqslant \liminf _{t \rightarrow+\infty} u(x, t) \leqslant \limsup _{t \rightarrow+\infty} u(x, t) \leqslant \lim _{t \rightarrow+\infty} p(t)=\xi_{2}\le \epsilon_0 \text {, }\\
		&\frac{a_{2}}{c_{2}}-\epsilon_0 \le \lim _{t \rightarrow+\infty}q(t) \leqslant \liminf\limits_{t \rightarrow+\infty} v(x, t) \leq \limsup\limits _{t \rightarrow+\infty} v(x, t) \leq \frac{a_{2}}{c_{2}} \text {. }
	\end{aligned}
\end{equation}
Thus we get \begin{equation}
	\lim _{t \rightarrow+\infty}(u(x, t), v(x, t))=(0, \frac{a_2}{c_2})
\end{equation} uniformly for $x\in\bar{\Omega}$.
\end{proof}

Combining Theorems \ref{4.1}, \ref{4.2}, \ref{4.3} and \ref{4.4}, we may obtain Theorem \ref{m}.
\subsection{The competition system with initial value and Dirichlet boundary}
In this subsection, we study the problem \eqref{41} with $B_{i}$ $(i=1,2)$ satisfying \eqref{1,4}. We get the existence and characterization of the steady-state solutions.

Denote $(u,v):=(u^{D},v^{D})$.
Consider the following problem
\begin{equation}\label{12}
	\left\{\begin{array}{lll}
		U_{t}-d_{i}\Delta_{\Omega_{i}} U= U(a_i-\beta_i U), & x \in \Omega, & t>0, \\
		U=0, & x \in \partial \Omega, & t>0, \\
		U\equiv u_{i} , & x \in \bar{\Omega}, & t=0,
	\end{array}\right.
\end{equation}
where $i=1,2$, $u_{1}\equiv u_{0}$, $\beta_1\equiv b_1$, $u_{2}\equiv v_{0}$ and $\beta_2\equiv c_2$. By \cite[Theorem 5.5]{HL}, we conclude that the problems \eqref{12} admit a unique nonnegative global solution $\check{u}(x,t), \check{v}(x,t)$ with $i=1,2$, respectively.
 
 Recall that $\lambda_{0,i}$ the smallest eigenvalue of the eigenvalue problem \eqref{451} $(i=1,2)$. 
 
We show that the trivial solution $(0,0)$ is globally asymptotically stable, when $a_1\le \lambda_{0,1} d_1$ and $a_2\le \lambda_{0,2} d_2$ by the following theorem. 
 \begin{Theorem}\label{Th}
Suppose that $u_0, v_0\ge 0$ on $\bar{ \Omega}$. Then $\lim\limits_{t\to +\infty} u(x,t)=0$ when $a_1\le \lambda_{0,1} d_1$; $\lim\limits_{t\to \infty} v(x,t)= 0$, when $a_2\le \lambda_{0,2} d_2$.
 \end{Theorem}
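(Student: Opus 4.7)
My plan is to reduce Theorem \ref{Th} to a scalar decay statement and then obtain that decay by testing against the first Dirichlet eigenfunction. I describe the argument for $u$ under $a_1 \le \lambda_{0,1} d_1$; the statement about $v$ follows by swapping $(a_1,d_1,b_1,\lambda_{0,1})$ with $(a_2,d_2,c_2,\lambda_{0,2})$.

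\textit{Reduction to the scalar logistic.} By Theorem \ref{1.1}(ii) and the construction in subsection 4.1, $v(x,t) \ge 0$ on $\bar{\Omega}\times[0,\infty)$. Since $c_1>0$ and $u\ge 0$, the first equation of \eqref{20220904-3} gives
\[
u_t - d_1 \Delta_{\Omega_1} u = u(a_1 - b_1 u - c_1 v) \le u(a_1 - b_1 u)
\]
in $\Omega_T$, with $u|_{S_T}=0$ and $u(\cdot,0)=u_0$. The function $\check{u}$ from \eqref{12} with $i=1$ solves the same problem with equality, so the single-equation case of Theorem \ref{bj} yields $0 \le u(x,t) \le \check{u}(x,t)$. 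It therefore suffices to show $\check{u}(x,t)\to 0$ whenever $a_1 \le \lambda_{0,1} d_1$.

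\textit{Energy against the principal eigenfunction.} Let $\phi_1$ be a first Dirichlet eigenfunction of \eqref{451} with $i=1$: $-\Delta_{\Omega_1}\phi_1 = \lambda_{0,1}\phi_1$ in $\Omega$, $\phi_1|_{\partial\Omega}=0$, and $\phi_1(x)>0$ for every $x\in\Omega$ (Perron--Frobenius on the connected graph). Set
\[
E(t):=\sum_{x\in\Omega}\check{u}(x,t)\,\phi_1(x)\,\mu^1(x).
\]
Symmetry of $\tilde{\omega}^1$ together with $\phi_1|_{\partial\Omega}=0$ and $\check{u}(\cdot,t)|_{\partial\Omega}=0$ for $t>0$ gives the discrete Green identity $\sum_{\Omega}\phi_1(\Delta_{\Omega_1}\check{u})\mu^1 = \sum_{\Omega}\check{u}(\Delta_{\Omega_1}\phi_1)\mu^1 = -\lambda_{0,1}E(t)$, whence
\[
E'(t) = (a_1-d_1\lambda_{0,1})\,E(t) - b_1\sum_{x\in\Omega}\check{u}(x,t)^2\,\phi_1(x)\,\mu^1(x), \qquad t>0.
\]
When $a_1<\lambda_{0,1}d_1$ the last sum is nonpositive and $E'\le(a_1-d_1\lambda_{0,1})E$ gives exponential decay. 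In the critical case $a_1=\lambda_{0,1}d_1$, Cauchy--Schwarz on the finite set $\Omega$ with weight $\phi_1\mu^1$ gives
\[
E(t)^2 \le K\sum_{x\in\Omega}\check{u}(x,t)^2\,\phi_1(x)\,\mu^1(x), \qquad K:=\sum_{x\in\Omega}\phi_1(x)\,\mu^1(x),
\]
so $E'(t)\le -(b_1/K)E(t)^2$, and integrating yields $E(t)\le K/(b_1t+K/E(0))\to 0$ as $t\to\infty$ (if $E(0)=0$ then $\check{u}(\cdot,0)\equiv 0$ on $\Omega$ and $\check{u}\equiv 0$ by uniqueness). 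Because every summand in $E$ is nonnegative and $\min_{x\in\Omega}\phi_1(x)\mu^1(x)>0$ on the finite set $\Omega$, the decay $E(t)\to 0$ forces $\check{u}(x,t)\to 0$ pointwise, and the reduction step then gives $u(x,t)\to 0$.

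\textit{Main obstacle.} The substantive difficulty is the critical case $a_1=\lambda_{0,1}d_1$: any purely linear upper bound of the form $e^{(a_1-d_1\lambda_{0,1})t}\phi_1$ is then merely constant in time, so one must retain the logistic damping $-b_1\check{u}^2$ and combine it with Cauchy--Schwarz to upgrade monotonicity of $E$ into (algebraic) decay. Equivalently, an upper-solution $\bar{u}(x,t)=M(t)\phi_1(x)$ with $M$ obeying a logistic ODE whose damping coefficient is $b_1\min_\Omega\phi_1$ would give the same conclusion; in either formulation the finiteness of $\Omega$ is what makes $\min_\Omega\phi_1$ strictly positive and permits the passage from weighted-sum decay to pointwise decay.
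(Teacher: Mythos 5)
Your argument is correct, and its first half coincides with the paper's: both reduce to the scalar logistic problem \eqref{12} by comparing $u$ with $\check{u}$ (the paper via \cite[Theorem 4.1]{HL}, you via the $m=1$ case of Theorem \ref{bj} applied to $w=\check{u}-u$ after linearizing the nonlinearity, which amounts to the same comparison principle). The difference is what happens next: the paper simply cites \cite[Theorem 5.5]{HL} for the decay $\check{u}\to 0$ when $a_1\le\lambda_{0,1}d_1$, whereas you prove that decay from scratch by testing against the principal Dirichlet eigenfunction, i.e.\ tracking $E(t)=\sum_{\Omega}\check{u}\,\phi_1\,\mu^1$ via the discrete Green identity (valid here because both $\phi_1$ and $\check{u}(\cdot,t)$, $t>0$, vanish on $\partial\Omega$ and $\tilde{\omega}^1$ is symmetric) and, in the critical case $a_1=\lambda_{0,1}d_1$, converting the logistic damping into the Riccati inequality $E'\le-(b_1/K)E^2$ by Cauchy--Schwarz. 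Your route buys self-containedness and an explicit rate (exponential in the strict case, algebraic $O(1/t)$ in the critical case), and it isolates exactly where finiteness of $\Omega$ enters: $\min_\Omega\phi_1\mu^1>0$ upgrades decay of the weighted sum to pointwise decay. The paper's route buys brevity at the cost of leaning on the external monotone-method result. Two small points to keep honest: the strict positivity of $\phi_1$ throughout $\Omega$ is the same fact the paper itself imports from \cite{HL} in the proof of Theorem \ref{m1}(iii), so you are not assuming more than the paper does; and in the Riccati step one should note (as you do) that either $E$ stays positive, so the integration is legitimate, or $E$ vanishes at some time and then $\check{u}\equiv 0$ by uniqueness.
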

 	\begin{proof}
It follows from \cite[Theorem 4.1]{HL} that $0\le u\le \check{u}$ and $0\le v\le 
 \check{v}$. In view of \cite[Theorem 5.5]{HL}, then $\lim\limits_{t\to +\infty} \check{u}(x,t)=0$ provided that $a_1\le \lambda_{0,1} d_1$ and $\lim\limits_{t\to +\infty} \check{v}(x,t)=0$ provided that  $a_2 \le \lambda_{0,2} d_2$. Therefore, we see that if $a_1\le \lambda_{0,1} d_1$, then $\lim\limits_{t\to +\infty} {u}(x,t)=0$; if $a_2\le \lambda_{0,2} d_2$, then  $\lim\limits_{t\to +\infty} {v}(x,t)=0$.   
 	\end{proof} 
 
 We establish the existence of nontrivial steady-state solutions of the problem \eqref{20220904-3} with $B_{i}$ $(i=1,2)$ satisfying \eqref{1,5} and unsability of the trivial solution $(0,0)$ when either $a_1>\lambda_{0,1} d_1$ or $a_2>\lambda_{0,2} d_2$ in the following theorem.
 \begin{Theorem}
 	If $a_1>\lambda_{0,1 }d_1$, $a_2\le \lambda_{0,2} d_2$ and $u_0\ge,\not\equiv 0$, then $\lim\limits_{t\to +\infty} \left( {u}(x,t),{v}(x,t)\right) =\left( s_1(x),0\right)$; If $a_1\le \lambda_{0,1} d_1$, $a_2> \lambda_{0,2} d_2$ and $v_0\ge,\not\equiv 0$, then $\lim\limits_{t\to +\infty} \left( {u}(x,t),{u}(x,t)\right) =\left( 0,s_2(x)\right)$.
 \end{Theorem}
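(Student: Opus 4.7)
The plan is to prove the first assertion in detail; the second follows by interchanging the roles of $u$ and $v$. By Theorem~\ref{Th}, the hypothesis $a_2 \le \lambda_{0,2} d_2$ already yields $\lim_{t \to +\infty} v(x,t) = 0$ uniformly on $\bar\Omega$. The task therefore reduces to showing $\lim_{t\to+\infty} u(x,t) = s_1(x)$, which I would establish by a squeeze argument: $\limsup u \le s_1$ and $\liminf u \ge s_1$.

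For the upper bound, drop the competition term: since $v \ge 0$, $u$ satisfies $u_t - d_1 \Delta_{\Omega_1} u \le u(a_1 - b_1 u)$ on $\Omega$ with $u = 0$ on $\partial \Omega$. Let $\check u$ be the solution of the single-species Dirichlet problem \eqref{12} with $i = 1$. Then comparison via \cite[Theorem~4.1]{HL} gives $u \le \check u$, and \cite[Theorem~5.5]{HL} yields $\check u(x,t) \to s_1(x)$ as $t \to +\infty$, so $\limsup_{t\to+\infty} u(x,t) \le s_1(x)$.

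For the lower bound, pick any $\epsilon > 0$ small enough that $a_1 - c_1 \epsilon > \lambda_{0,1} d_1$. Since $v(\cdot, t) \to 0$ uniformly, there is $T_\epsilon$ with $v(x,t) < \epsilon$ for $x \in \bar\Omega$ and $t \ge T_\epsilon$; by Theorem~\ref{1.1}(ii), $u(\cdot, T_\epsilon) > 0$ on $\Omega$. For $t \ge T_\epsilon$,
\begin{equation*}
u_t - d_1 \Delta_{\Omega_1} u \ge u\bigl((a_1 - c_1 \epsilon) - b_1 u\bigr), \quad u|_{\partial\Omega} = 0.
\end{equation*}
Let $w_\epsilon$ be the solution of the companion logistic Dirichlet problem with growth rate $a_1 - c_1 \epsilon$, coefficient $b_1$, and initial datum $u(\cdot, T_\epsilon)$ at time $T_\epsilon$. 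By \cite[Theorem~4.1]{HL}, $u \ge w_\epsilon$ for $t \ge T_\epsilon$, and by \cite[Theorem~5.5]{HL}, $w_\epsilon(x,t) \to s_1^\epsilon(x)$, where $s_1^\epsilon$ denotes the unique positive solution of \eqref{20220904-1} with $a_1$ replaced by $a_1 - c_1 \epsilon$. Hence $\liminf_{t \to +\infty} u(x,t) \ge s_1^\epsilon(x)$.

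The final step is to let $\epsilon \to 0^{+}$, and here lies the main technical point: the continuity $s_1^\epsilon \to s_1$ pointwise on $\bar\Omega$. Because $\bar\Omega$ is finite, $\{s_1^\epsilon\}$ is a bounded family in $\mathbb{R}^{|\bar\Omega|}$ (uniformly dominated by $(a_1 - c_1 \epsilon)/b_1 \le a_1/b_1$) satisfying a nonlinear algebraic system that depends continuously on $\epsilon$. Any subsequential limit is a nonnegative solution of the $\epsilon = 0$ problem \eqref{20220904-1}; invoking $a_1 > \lambda_{0,1} d_1$ one shows it is bounded below on $\Omega$ by a small positive multiple of the principal eigenfunction of $-d_1 \Delta_{\Omega_1}$, hence is strictly positive, and uniqueness in \cite[Theorem~5.5]{HL} forces it to coincide with $s_1$. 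Combining $\liminf u \ge s_1$ and $\limsup u \le s_1$ then gives $u \to s_1$, and together with $v \to 0$ this completes the first claim; the second is proved by the symmetric argument, exchanging $(u, a_1, b_1, d_1, \lambda_{0,1})$ with $(v, a_2, c_2, d_2, \lambda_{0,2})$.
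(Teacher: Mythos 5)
Your proof is correct and follows essentially the same route as the paper's: bound $u$ above by the single-species Dirichlet solution $\check u$, use $v\to 0$ (from $a_2\le\lambda_{0,2}d_2$) to bound $u$ below for large $t$ by the logistic Dirichlet problem with growth rate $a_1-c_1\epsilon$, and pass to the limit $\epsilon\to 0^{+}$ using boundedness of the steady states $s_{1,\epsilon}$ on the finite graph together with uniqueness of the positive solution of \eqref{20220904-1}. If anything, you are slightly more careful than the paper at two points: you start the comparison problem from the datum $u(\cdot,T_\epsilon)$ rather than from $u_0$ as in \eqref{452} (the latter does not obviously sit below $u(\cdot,T_1)$, so your choice is the one that actually makes \cite[Theorem 4.1]{HL} applicable), and you explicitly rule out that the subsequential limit of $s_{1,\epsilon}$ is the trivial solution before invoking uniqueness, a step the paper passes over in silence.
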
	
\begin{proof}
	As $a_1>\lambda_{0,1} d_1$, $a_2\le \lambda_{0,2} d_2$, by \cite[Theorem 5.5]{HL}, we see that $\lim\limits_{t\to +\infty} \check{u}(x,t)=s_1(x)$ and $\lim\limits_{t\to \infty}\check{v}(x,t)=0$. We apply \cite[Theorem 4.1]{HL} to conclude that $u(x,t)\le \check{u}(x,t)$, $v(x,t)\le \check{v}(x,t)$.  Thus, it's easily seen that
	 \begin{equation}\label{i1}
		\limsup\limits_{t\to \infty} u(x,t)\le s_{1}(x)
	\end{equation} 
and $\lim\limits_{t \rightarrow \infty} v(x,t)=0$. For any $0<\epsilon<\epsilon_{0}:=\frac{a_1- \lambda_{0,1} d_1}{c_1}$, we can find $T_{1}=T_{1}(\epsilon)>0$ such that $v\le \epsilon$ for $x\in\bar{\Omega}$ and $t\ge T_1$. Thus, the problem 
	\begin{equation}\label{452}
		\left\{\begin{array}{lll}
			U_{t}-d_{1}\Delta_{\Omega_1} U= U(a_1-b_1 U-c_1\epsilon), & x \in \Omega, & t>T_1, \\
			U=0, & x \in \partial \Omega, & t>T_1, \\
			U\equiv u_{0}(x) , & x \in \bar{\Omega}, & t=T_1
		\end{array}\right.
	\end{equation}
	admits a unique positive solution $\underline{u}_{\epsilon}(x,t)$ satisfying $\lim\limits_{t\to\infty} \underline{u}_{\epsilon}(x,t)=s_{1,\epsilon}(x)$ by \cite[Theorem 5.5 ]{HL}, where $s_{1,\epsilon}(x)$ is the unique positive solution to problem 
	\begin{equation}\label{20220924-6}
		\begin{cases}
			-d_{1}\Delta_{\Omega_1} s_{1,\epsilon}= s_{1,\epsilon}\left(a_1-b_1 s_{1,\epsilon}-c_1\epsilon \right) ~\text{ in }~\Omega,\\
			s_{1,\epsilon}=0~\text{ on }~\partial{\Omega}.
		\end{cases}
	\end{equation}
 Then we use \cite[Theorem 3.8]{HL} to deduce that 
 $$0\le s_{1,\epsilon}(x)\le \frac{a_{1}-c_{1}\epsilon}{b_1}~\text{for}~x\in \bar{ \Omega}~\text{and}~\epsilon\in (0,\epsilon_{0})$$ and 
 hence that $s_{1,\epsilon}(x)$ is bounded with respect to $0\le \epsilon\le \epsilon_0$ for all $x\in \bar{ \Omega}$. Thus there exists a sequence $\{\epsilon_{n}\}_{n=1}^{\infty}$ satisfying $0<\epsilon_{n}<\epsilon_{0}$ and $\epsilon_{n}\to 0$ as $n\to +\infty$ and $\bar{S}(x)$ so that $s_{1,\epsilon_{n}}(x)\to \bar{S}(x)$ for $x\in \bar{\Omega}$. Letting $n\to \infty$ in \eqref{20220924-6} with $\epsilon$ replaced by $\epsilon_{n}$, we conclude that $\bar{S}$ satisfies \eqref{20220904-1} with $i=1$.
By the uniqueness of the positive solution to \eqref{20220904-1} with $i=1$, we see that $\bar{S}(x)=s_{1}(x)$. It follows from \cite[Theorem 4.1]{HL} that $u(x,t)\ge \underline{u}_{\epsilon}(x,t)$ for $x\in\bar{ \Omega}$ and $t\ge T_1$. This implies that that \begin{equation}
	\liminf_{t \rightarrow+\infty} u(x,t)\ge s_{1,\epsilon_n}(x)~\text{for}~x\in \bar{ \Omega}. 
\end{equation}
Letting $n \to \infty$ in the above inequality, we see that $\liminf\limits_{t \rightarrow \infty} u(x,t)\ge s_{1}(x)$. Combining this with \eqref{i1}, we deduce that $\lim\limits_{t\to \infty} u(x,t)=s_{1}(x)$. 
 Therefore, we obtain $\lim\limits_{t\to\infty}u(x,t)=s_{1}(x).$ 

By a similar argument as above, we see that if $a_{1}\le \lambda_{0,1} d_1$ and $a_{2}> \lambda_{0,2} d_2$, then $\lim\limits_{t\to+\infty} (u,v)=(0,s_{2})$.
\end{proof}

We next study the case $a_1 > \lambda_0 d_1$ and $a_2>\lambda_0 d_2$, we establish a sufficient condition for the species $u$ and $v$ coexist.

{\bf Proof of Theorem \ref{m1} (iii):}
	 In view of \cite[Theorem 4.1]{HL} and Theorem 2.2, we see that $0< u\le \check{u}$ and $0< v\le \check{v}$ for $\overline{\Omega}\times(0,+\infty)$. As $a_1 > \lambda_{0,1} d_1$ and $a_2>\lambda_{0,2} d_2$,  it's easily seen that  $$\lim\limits_{t\to +\infty} \check{u}(x,t)=s_{1}(x)\;\mbox{ and}\;\lim\limits_{t\to +\infty} \check{v}(x,t)=s_{2}(x)$$
	 by \cite[Theorem 5.5]{HL}. 
	Therefore, for $0<\epsilon<\min \left\{ \frac{b_1}{a_1b_2}(a_2-\lambda_{0,2} d_2)-1,\frac{c_2}{a_2c_1}(a_1-\lambda_{0,1} d_1)-1\right\}$, there exists $T_2>0$ so that
	\begin{equation}\label{460}
		u(x,T_2)\le (1+\epsilon)s_1(x),~v(x,T_2)\le (1+\epsilon) s_{2}(x),\;\mbox{for}\;x\in \bar{\Omega}).
	\end{equation}

Let $\phi_{i}$ be the eigenvector of \eqref{451} corresponding to $\lambda_{0,i}$, $i=1,2$. By \cite{HL}, we can assume that $\phi_{i}>0$ in $\Omega$, $i=1,2$.  Thus, by \eqref{k1},  there exist sufficiently small $\delta>0$ so that
and
 \begin{equation}\label{459}
	\delta\phi_{1},~\delta\phi_{2}(x) \le \min\left\lbrace  u(x,T_2),~v(x,T_2),~E,~F   \right\rbrace~\text{for}~x\in \bar{ \Omega},
\end{equation} 	
where
\begin{equation*}
	E:=\frac{1}{c_2}\left[ a_2-\lambda_{0,2} d_2-\left(1+\epsilon \right) \frac{a_1}{b_1}b_2 \right]>0,~F:=\frac{1}{b_1} \left[ a_1-\lambda_{0,1} d_1-\left(1+\epsilon\right)\frac{a_2}{c_2}c_1 \right]>0.
\end{equation*}
	Set 
	$$\bar{U}:=(1+\epsilon)s_1,~ \bar{V}:=(1+\epsilon)s_2,~ \underline{U}:=\delta \phi_{1},~ \underline{V}:=\delta \phi_{2}.$$ 
	Thanks to $s_{1}\ge 0$ and $\underline{V}\ge 0$, we see that 
	\begin{equation}\label{461}
		\bar{U}_{t}-d_1 \Delta_{\Omega_{1}} \bar{U}=-d_1 (1+\epsilon)\Delta_{\Omega_{1}} s_{1}=(1+\epsilon)s_1(a_1-b_1s_1)\ge \bar{U}(a_1-b_1\bar{U}-c_1 \underline{V}).
	\end{equation} 

Since $s_1\le \frac{a_1}{b_1}$ on $\bar{ \Omega}$, by \eqref{459}, we deduce that
\begin{equation}
	\underline{V}_{t}-d_2\Delta_{\Omega_{2}} \underline{V}= \delta\lambda_0 d_{2}\phi_{2}\le \underline{V}\left(a_2-b_2\bar{U}-c_2 \underline{V} \right) .
\end{equation}
In view of $s_2\le \frac{a_2}{c_2}$, by \eqref{459}, we see that 
\begin{equation}
	\underline{U}_{t}-d_1 \Delta_{\Omega_{1}} \underline{U}=-d_1\Delta_{\Omega_{1}}(\delta\phi_{1})=d_1\delta \lambda_0 \phi_{1}\le \underline{U}\left(a_1 -b_1\underline{U}-c_1 \bar{V} \right). 
\end{equation}
Thanks to $\underline{U}\ge 0$ and $s_2 \ge 0$, we conclude that 
\begin{equation}
	\bar{V}_{t}-d_2 \Delta_{\Omega_{2}} \bar{V}=-d_2 \left( 1+\epsilon \right)\Delta_{\Omega_{2}} s_2=(1+\epsilon)s_{2}(a_2 -c_2 s_2 ) \ge \bar{V}\left( a_2-b_2\underline{U}-c_2 \bar{V}\right).  
\end{equation}

Let $(\hat{u},\tilde{v})$ be the unique nonnegative global solution to \eqref{20220904-3} with  $B_{1}u=u=0,~ B_{2}v=v=0$, $u_0=\bar{U}$ and $v_0=\underline{V}$ and $(\tilde{u},\hat{v})$ be the unique nonnegative global solution to \eqref{20220904-3} with  $B_{1}u=u=0, B_{2}v=v=0$, $u_0=\underline{U}$ and $v_0=\bar{V}$.
Let $W:=\hat{u}-\bar{U}$ and $K:=\underline{V}-\tilde{v}$; then by \eqref{461} we see that 
\begin{equation}\label{4312}
	\left\{\begin{array}{lll}
		W_{t}-d_{1}\Delta_{\Omega_{1}} W\le \left[a_1 -b_1 \left(\hat{u}+\bar{U} \right)-c_1 \tilde{v}  \right] W+c_1 \bar{U} K , & x \in \Omega, & t>0, \\
		K_{t}-d_{2}\Delta_{\Omega_{2}} K \le \left[a_2 -c_2\left(\underline{V}+\tilde{v} \right)-b_2 \bar{U}  \right]K+b_2 \tilde{v} W , & x \in \Omega, & t>0, \\
		 W= K=0, & x \in \partial \Omega, & t>0, \\
		W=0 , \quad K=0 , & x \in \bar{\Omega}, & t=0.
	\end{array}\right.
\end{equation}
Thus, by Theorem \ref{bj}, we see that $W\le 0$ and $K\le 0$. This implies that $\hat{u}\le \bar{U}$ and $\underline{V}\le \tilde{v}$ for $t\ge 0$ and $x\in \bar{ \Omega}$. By Theorem 4.1 in \cite{HL}, we see that 
for any $\epsilon^{'}>0$, 
\begin{equation}
	 \hat{u}(x,t) \ge 
	 \hat{u}(x,t+\epsilon^{'}) ~\text{and}~
\tilde{v}(x,t)\le	  \tilde{v}(x,t+\epsilon^{'}) .  
\end{equation}
By a similar argument as above, we may deduce that $\tilde{u} \ge \underline{U}$, $\hat{v}\le \bar{V}$ and  for any $\epsilon^{'}>0$,
\begin{equation}
\tilde{u}(x,t) \le \tilde{u}(x,t+\epsilon^{'})~\text{and}~
	\hat{v}(x,t)	\ge \hat{v}(x,t+\epsilon^{'})  , 
\end{equation}
$t\ge 0$, $x\in \bar{\Omega}$. This implies that $\hat{u}(x,t)$ and $\hat{v}(x,t)$ are nonincreasing and $\tilde{u}$ and $\tilde{v}$ are nondecreasing with respect to t. Thus, we can define 
\begin{equation}\label{1b}
	\left(\bar{s}(x),\underline{s}(x),\bar{r}(x),\underline{r}(x)\right):=\lim_{t\to+\infty}\left(\hat{u}(x,t),\tilde{u}(x,t),\hat{v}(x,t),\tilde{v}(x,t) \right)   .
\end{equation}
By Theorem \ref{21}, \eqref{460} and \eqref{459}, we conclude that \begin{equation}\label{2b}
	\tilde{u}(x,t)\le u(x,t+T_2) \le \hat{u}(x,t),~\tilde{v}(x,t)\le v(x,t+T_2)\le \hat{v}(x,t)
\end{equation}
for $t\ge 0$ and $x\in\bar{\Omega}$.

Combining \eqref{1b} with \eqref{2b}, we  obtain \eqref{458}.

Now, we suppose the conditions \eqref{d} and \eqref{tj} hold. At this time, we have $\Delta_{\Omega_{1}}=\Delta_{\Omega_{2}}$, $\phi_{1}\equiv\phi_{2}=\phi$ and $\lambda_{0,1}=\lambda_{0,2}$. Rewrite the three as $\Delta_{\Omega}$, $\phi$, $\lambda$. Let $(\hat{U},\tilde{V})$ be the unique nonnegative global solution to \eqref{20220904-3} with  $B_{1}u=u=0,~ B_{2}v=v=0$, $u_0=\bar{U}$ and $v_0=\delta \phi$ and $(\tilde{U},\hat{V})$ be the unique nonnegative global solution to \eqref{20220904-3} with  $B_{1}u=u=0, B_{2}v=v=0$, $u_0=\delta \phi$ and $v_0=\bar{V}$. Then we see that
\begin{equation}
	\begin{cases}
		\hat{u}_{t}-d_1 \Delta_{\Omega}\hat{u}=\hat{u}\left(a_1 -b_1\hat{u}-c_1\tilde{v}
\right)~(x,t)\in \Omega\times (0,+\infty) ,\\
\tilde{u}_{t}-d_1\Delta_{\Omega}\tilde{u}=\tilde{u}\left( a_1-b_1\tilde{u}-c_1\hat{v}\right),~(x,t)\in \Omega\times (0,+\infty), \\
\hat{v}_{t}-d_2\Delta_{\Omega} \hat{v}=\hat{v}\left(a_2-b_2\tilde{u}-c_2 \hat{v} \right),~(x,t)\in \Omega\times (0,+\infty),\\
\tilde{v}_{t}-d_2\Delta_{\Omega} \tilde{v}=\tilde{v}\left(a_2-b_2\hat{u}-c_2 \tilde{v} \right),~(x,t)\in \Omega\times (0,+\infty).  
	\end{cases}
\end{equation}
This implies that for any $t_1>0$ and $T>0$, 
 \begin{equation}\label{t1}
 	\frac{\hat{u}(x,T)-\hat{u}(x,t_1)}{T}-\frac{1}{T} \int_{t_1}^{T} d_1\Delta_{\Omega} \hat{u} dt=\frac{1}{T}\int_{t_1}^{T}\hat{u}\left( a_1-b_1\hat{u}-c_1\tilde{v} \right) dt,
 \end{equation} 	
	\begin{equation}\label{t2}
		\frac{\tilde{u}(x,T)-\tilde{u}(x,t_1)}{T}-\frac{1}{T} \int_{t_1}^{T} d_1\Delta_{\Omega} \tilde{u} dt=\frac{1}{T}\int_{t_1}^{T}\tilde{u}\left( a_1-b_1\tilde{u}-c_1\hat{v} \right) dt,
	\end{equation}
\begin{equation}\label{t3}
	\frac{\hat{v}(x,T)-\hat{v}(x,t_1)}{T}-\frac{1}{T} \int_{t_1}^{T} d_2 \Delta_{\Omega} \hat{v} dt=\frac{1}{T}\int_{t_1}^{T}\hat{v}\left( a_2-b_2\tilde{u}-c_2\hat{v} \right) dt,
\end{equation}
	\begin{equation}\label{t4}
	\frac{\tilde{v}(x,T)-\tilde{v}(x,t_1)}{T}-\frac{1}{T} \int_{t_1}^{T} d_2 \Delta_{\Omega} \tilde{v} dt=\frac{1}{T}\int_{t_1}^{T}\tilde{v}\left( a_2-b_2\hat{u}-c_2\tilde{v} \right) dt.
\end{equation}
Letting $T\to \infty$ in \eqref{t1}-\eqref{t4}, we deduce that 
\begin{equation}
	-d_1 \Delta_{\Omega} \bar{s}=\bar{s}\left(a_1-b_1\bar{s}-c_1\underline{r} \right), 
\end{equation}
\begin{equation}
	-d_1 \Delta_{\Omega} \underline{s}=\underline{s}\left(a_1-b_1\underline{s}-c_1\overline{r} \right), 
\end{equation}
\begin{equation}
	-d_2 \Delta_{\Omega} \bar{r}=\bar{r}\left(a_2-b_2\underline{s}-c_2\bar{r} \right), 
\end{equation}
\begin{equation}
	-d_2 \Delta_{\Omega} \underline{r}=\underline{r}\left(a_2-b_2\bar{s}-c_2\underline{r} \right), 
\end{equation}
for $x\in\Omega$, where $\bar{s}=\bar{s}(x;\tilde{\omega},\mu)$, $\underline{s}=\bar{s}(x;\tilde{\omega},\mu)$, $\bar{r}=\bar{r}(x;\tilde{\omega},\mu)$, $\underline{r}=\bar{r}(x;\tilde{\omega},\mu)$.

 Clearly, $\bar{s}=\underline{s}=\bar{r}=\underline{r}=0$ on $\partial\Omega$. Thus, $\left(\bar{s},\underline{r} \right) $, $\left(\underline{s}, \bar{r} \right) $ are solutions to \eqref{ss}

 By \eqref{1b} and \eqref{2b}, we know that $\underline{s} \le \bar{s} $ and $\underline{r} \le \bar{r}$. Let $S(x):=\bar{s}-\underline{s}$ and $R(x):=\bar{r}-\underline{r}$; then we see that 
\begin{equation}\label{469}
	\begin{aligned}
		&-d_{1} \Delta_{\Omega} S(x)=\left[a_{1}-b_{1}(\bar{s}+\underline{s})-c_{1} \underline{r}\right] S(x)+c_{1} \underline{s} R(x),\\
		&-d_{2} \Delta_{\Omega} R(x)=\left[a_{2}-c_{2}(\bar{r}+\underline{r})-b_{2} \underline{s}\right] R(x)+b_{2} \underline{r} S(x)
	\end{aligned}
\end{equation}
Multiplying the equalities in \eqref{469} by $\phi$, and by integration by parts, we obtain 
\begin{equation}
	\begin{aligned}
		&\left(\lambda_{0} d_{1}-a_{1}\right) \int_{\Omega} S \phi d \mu=\int_{\Omega}\left[-b_{1}(\bar{s}+\underline{s}) S \phi-c_{1} \underline{r}(x) S(x) \phi+c_{1} \underline{s} R \phi \right]d \mu, \\
		&\left(\lambda_{0} d_{2}-a_{2}\right) \int_{\Omega} R \phi d \mu=\int_{\Omega}\left[-c_{2}(\bar{r}+\underline{r}) R \phi-b_{2} \underline{s} R \phi+b_{2} \underline{r}(x) S \phi \right]d \mu.
	\end{aligned}
\end{equation}
Multiplying the first equation by $b_2$, the second by $c_1$ and adding yield,
 \begin{equation}
	b_{2} \int_{\Omega}\left[\lambda_{0} d_{1}-a_{1}+b_{1}(\bar{s}+\underline{s})\right] S \phi d \mu+c_{1} \int_{\Omega}\left[\lambda_{0} d_{2}-a_{2}+c_{2}(\bar{r}+\underline{r})\right] R \phi  d \mu=0
\end{equation}
 In view of \eqref{tj}, and the fact that $\bar{s}\ge \underline{s}$, $\underline{r}\le \bar{r}$, we see that
\begin{equation}
	b_{2} \int_{\Omega}\left[\left(\lambda_{0} d_{1}-a_{1}\right)+2 b_{1} \underline{s} \right] (\bar{s}-\underline{s}) \phi d \mu+c_{1} \int_{\Omega}\left[\lambda_{0} d_{2}-a_{2}+2 c_{2} \underline{r}\right] (\bar{r}-\underline{r}) \phi d \mu \leqslant 0 .
\end{equation}
Thus, by \eqref{tj} and the fact that $\phi>0$ in $\Omega$, we see that 
\begin{equation}\label{e}
	\bar{s}\equiv \underline{s}~\text{ and}~ \bar{r}\equiv \underline{r}.
\end{equation}

We now complete the proof of Theorem \ref{m1}.

\subsection{The competition system with initial value} 
In this subsection, we investigate the problem \eqref{41} and obtain the domain of attraction of the steady-state solutions. 

We first give the proof of Theorem \ref{1a}.
Take $$M_{3}=\max\{\frac{a_1}{b_1}, \max_{V} u(x,0) \} ~\text{and}~ M_{4}=\max\{\max_{V}v(x,0), \frac{a_2}{c_2} \}.$$ It is easy to check that $(M_3, M_4)$ and $(0,0)$ are upper and lower solutions to \eqref{41}. By Theorem \ref{sx2}, \eqref{41} admits a unique solution $(u^{*},v^{*})$ defined for all $t>0$ so that $(0,0) \le (u^{*},v^{*}) \le (M_3, M_4)$ on $V$. Furthermore, if $\tilde{u}_{0}\ge, \not\equiv 0$ and $\tilde{v}_0\ge, \not \equiv0$ on $V$, then by Theorem \ref{1.4}, we see that 
\begin{equation}\label{z2}
	u^{*}>0~ \text{and } v^{*}>0~ \text{on}~ V\times(0,+\infty).
\end{equation} 
We now complete the proof of Theorem \ref{m1}.

Denote $(u,v)=(u^{*},v^{*})$. 
By \eqref{z2} we have 
\begin{equation}\label{4}
	\left\{\begin{array}{lll}
		u_{t}-d_{1}\Delta_{V_1} u\le u(a_1-b_1u), & x \in V, & t>0, \\
		v_{t}-d_{2}\Delta_{V_2} v\le v(a_2-c_2v), & x \in V, & t>0, \\
		u=\tilde{u}_{0} \geqslant 0, \quad v=\tilde{v}_{0} \geqslant 0, & x \in V, & t=0,
	\end{array}\right.
\end{equation}
Now, we consider the following problem
\begin{equation}\label{22}
	\left\{\begin{array}{lll}
		U_{t}-d_{i}\Delta_{V_i} U= U(a_i-\beta_i U), & x \in V, & t>0, \\
		U\equiv u_{i} , & x \in V, & t=0,
	\end{array}\right.
\end{equation}
where $i=1,2$, $u_{1}\equiv \tilde{u}_{0}$, $\beta_1\equiv b_1$, $u_{2}\equiv \tilde{v}_{0}$ and $\beta_2\equiv c_2$.
By a similar discussion as in the proof of \cite[Theorem 5.7]{HL}, we deduce that the problem \eqref{22} admits a unique nonnegative solution $U(x,t)$ with $i=1$, and a unique nonnegative  solution $V(x,t)$ with $i=2$. It follows from \cite[Theorem 4.1]{HL} that 
$$u(x,t)\le U(x,t)\;\mbox{and}\;v(x,t)\le V(x,t)\;\mbox{for }\;x\in V,~ t>0.$$ 
By a similar argument as in the proof of \cite[Theorem 5.7]{HL}, we see that $$\lim\limits_{t\to \infty}U(x,t) =\frac{a_1}{b_1}\;\mbox{and}\;\lim\limits_{t\to \infty}V(x,t) =\frac{a_2}{c_2}.$$  
Then, it follows that  
\begin{equation}\label{20220924-7}
\limsup\limits_{t\to \infty}u(x,t) \le \frac{a_1}{b_1}\;\mbox{ and }\;\limsup\limits_{t\to \infty}v(x,t) \le \frac{a_2}{c_2}.
\end{equation}
 {\bf Proof of Theorem \ref{m2}:}
	The upper and lower solutions used in Theorem 1 are still applicable here. Thus,  by Theorem \ref{sx2}, we might obtain the desired conclusions. For completeness, we give the details.

We first deal with the case that $\frac{a_1}{a_2}< \frac{b_1}{b_2}$ and $\frac{a_1}{a_2}< \frac{c_1}{c_2}$.  For $0<\epsilon<\min\{\frac{c_1a_2}{c_2}-a_1,\frac{b_1a_2}{b_2}-a_1\}$, there exists $t_0>0$ such that 
\begin{equation}\label{20220924-8}
	u(x,t)<\frac{a_1 + \epsilon}{b_1}\;\mbox{and}\;v(x,t)<\frac{a_2 +\epsilon}{c_2}\; \mbox{uniformly for}\;x\in V,t\ge t_0.
\end{equation}
In this case, we could choose  
sufficiently small $\sigma>0$, $\beta>0$ satisfies \eqref{1,}, \eqref{2,}, respectively.

Define 
$$
\bar{u}(t)=\frac{(a_1 +\epsilon)e^{-\beta(t-t_0)}}{b_1},~\underline{u}(t)=\frac{\sigma e^{-q(t-t_0)}}{b_1},$$
$$
\bar{v}(t)=\frac{a_2+\epsilon e^{-\beta(t-t_0)}}{c_2}, \underline{v}(t)=\frac{a_2-(a_2-\sigma)e^{-\beta(t-t_0)}}{c_2}.
$$
where $q=\frac{c_1}{c_2}(a_2+\epsilon)+\sigma-a_1$.
A direct calculation yields that 
\begin{equation}\label{494}
	\left\{\begin{array}{lll}
		\bar{u}_t-d_1 \Delta_{V_1} \bar{u} \geqslant \bar{u}\left(a_1-b_1 \bar{u}-c_1 \underline{v}\right), & x \in V, & t>0\\
		\underline{v}_t-d_2 \Delta_{V_2} \underline{v} \leqslant \underline{v}\left(a_2-b_2 \bar{u}-c_2 \underline{v}\right), & x \in V, & t>0\\
		\underline{u}_t-d_1 \Delta_{V_1} \underline{u} \leqslant \underline{u}\left(a_1-b_1 \underline{u}-c_1 \bar{v}\right), & x \in V, & t>0\\
		\bar{v}_t-d_2 \Delta_{V_2} \bar{v} \geqslant \bar{v}\left(a_2-b_2 \underline{u}-c_2 \bar{v}\right), & x \in V, & t>0\\
		\bar{u}\left(t_0\right)>u\left(x, t_0\right)>\underline{u}\left(t_0\right), \bar{v} \left(t_0\right)>\bar{v}\left(x, t_0\right)>\underline{v}\left(t_0\right)~&x\in V.
	\end{array}\right.
\end{equation}
Thus by Definition \ref{34}, $(\bar{u},\bar{v})$, $(\underline{u},\underline{v})$ is a pair of upper and lower solutions to \eqref{41}. By Theorem \ref{212}, we have $(\underline{u},\underline{v})\le(u,v) \le(\bar{u},\bar{v})$ on $V \times [t_0,+\infty)$. Since
\begin{equation}
	0=\lim\limits_{t\to \infty} \underline{u}(t)= \lim\limits_{t\to \infty}\bar{u}(t)=0,
\end{equation}
and
\begin{equation}
	\frac{a_2}{c_2}=\lim\limits_{t\to \infty} \underline{v}(t)= \lim\limits_{t\to \infty}\bar{v}(t)=\frac{a_2}{c_2},
\end{equation} 	
this implies that $\lim\limits_{t\to +\infty}u(x,t)=0~ \text{and} \lim\limits_{t \to +\infty}v(x,t)=\frac{a_2}{c_2}$. 

Second we consider the case that 	$\frac{b_1}{b_2}<\frac{a_1}{a_2},$ $\frac{c_1}{c_2}<\frac{a_1}{a_2}$. For any  $0<\epsilon<\min\{\frac{a_1c_2}{c_1}-a_2,\frac{a_1b_2}{b_1}-a_2\}$, we can find $t_0>0$ such that \eqref{20220924-8} holds. Then we can find sufficiently small $\sigma,\beta>0$ satisfying \eqref{2,1}, \eqref{2,2}, respectively.

Now, we define
\begin{equation*}
\underline{u}_{1}(t)=\frac{a_1-(a_1-\sigma)e^{-\beta(t-t_0)}}{b_1},~\bar{v}_{1}(t)=\frac{(a_2+\epsilon)e^{-\beta(t-t_0)}}{c_2},~\underline{v}_{1}(t)=\frac{\sigma e^{-q(t-t_0)}}{c_2},
\end{equation*}
where $q=\sigma +\frac{b_2}{b_1}(a_1+\epsilon) -a_2$. It is easy to check that $(\underline{u}_{1},\underline{v}_{1})$ and $ (\bar{u}_{1},\bar{v}_{1})$ satisfy \eqref{494}. By Theorem \ref{212}, we have $(\underline{u}_{1},\underline{v}_{1})\le(u,v) \le(\bar{u}_{1},\bar{v}_{1})$ on $V \times [t_0,+\infty)$. Since 
\begin{equation}
	\frac{a_1}{b_1}=\lim\limits_{t\to \infty} \underline{u}(t)= \lim\limits_{t\to \infty}\bar{u}(t)=\frac{a_1}{b_1},
\end{equation}
and 
\begin{equation}
	0=\lim\limits_{t\to \infty} \underline{v}(t)= \lim\limits_{t\to \infty}\bar{v}(t)=0,
\end{equation} 	
we see that $\lim\limits_{t\to +\infty}u(x,t)=\frac{a_1}{b_1}~ \text{and} \lim\limits_{t \to +\infty}v(x,t)=0$.

Third we consider the case that 	
$	\frac{c_{1}}{c_{2}}<\frac{a_{1}}{a_{2}}<\frac{b_{1}}{b_{2}}$. For any given $0<\epsilon<\min\{\frac{c_2a_1}{c_1}-a_2,\frac{a_2b_1}{b_1}-a_1\}$, we can choose $t_0$ so that \eqref{20220924-8} holds. Due to $\frac{c_{1}}{c_{2}}<\frac{a_{1}}{a_{2}}<\frac{b_{1}}{b_{2}},$ we can choose sufficiently small $\sigma>0$ and $0<q<\min\{q_1,q_2,q_3,q_4\}$ satisfying \eqref{b} and \eqref{43}, respectively. 

Set 
\begin{equation}
	\begin{aligned}
		&\bar{u}_{2}=\frac{b_{1} \xi+\left(a_{1}+\varepsilon-b_{1} \xi\right) e^{-q\left(t-t_{0}\right)}}{b_{1}}, \underline{u}_{2}=\frac{b_{1} \xi-\left(b_{1} \xi-\sigma\right) e^{-q\left(t-t_{0}\right)}}{b_{1}}, \\
		&\bar{v}_{2}=\frac{c_{2} \eta+\left(a_{2}+\varepsilon-c_{2} \eta\right) e^{-q\left(t-t_{0}\right)}}{c_{2}}, \underline{v}_{2}=\frac{c_{2} \eta-\left(c_{2} \eta-\sigma\right) e^{-q\left(t-t_{0}\right)}}{c_{2}}.
	\end{aligned}
\end{equation}
It is easily seen that $(\underline{u}_{2},\underline{v}_{2})$ and $ (\bar{u}_{2},\bar{v}_{2})$ satisfy \eqref{494}. By Theorem \ref{212}, we have $(\underline{u}_{2},\underline{v}_{2})\le(u,v) \le(\bar{u}_{2},\bar{v}_{2})$ on $V \times [t_0,+\infty)$. Since
\begin{equation}
	\xi=\lim\limits_{t\to \infty} \underline{u}_{2}(t)= \lim\limits_{t\to \infty}\bar{u}_{2}(t)=\xi,
\end{equation}
and
\begin{equation}
	\eta=\lim\limits_{t\to \infty} \underline{v}_{2}(t)= \lim\limits_{t\to \infty}\bar{v}_{2}(t)=\eta,
\end{equation} 	
this implies that $\lim\limits_{t\to +\infty}u(x,t)=\xi ~ \text{and} \lim\limits_{t \to +\infty}v(x,t)=\eta$. 

Finally we consider the case that $\frac{b_{1}}{b_{2}}<\frac{a_{1}}{a_{2}}<\frac{c_{1}}{c_{2}}$. Step by step along the proof of Theorem \ref{4.4} (using Theorem \ref{212} instead of Theorem \ref{21}), we could deduce that $\lim\limits _{t \rightarrow \infty}(u(x, t), v(x, t))=\left(\frac{a_{1}}{b_{1}}, 0\right)$ when $\xi<u_{0}(x)<\frac{a_{1}}{b_{1}}$ and $0<v_{0}(x)<\eta$ on $V$ and that
$\lim\limits _{t \rightarrow \infty}(u(x, t), v(x, t))=\left(0,\frac{a_{2}}{c_{2}} \right)$ when $0<u_0<\xi$ and $\eta<v_0(x)< \frac{a_2}{c_2}$ on $V$.   

\section{Examples}

 We give some examples with numerical experiments to demonstrate Theorems \ref{1.1} and \ref{m1}.
\begin{example}\label{e1}
	Choose a graph $\overline{\Omega}$ satisfying $\Omega=\{x_1 ,x_2, x_3 \}$ and $\partial{\Omega}=\{x_4, x_5 \}$ whose vertices are linked as the following figure with a weight $\omega$ satisfying 
	$
	\omega_{x_i x_j}=\begin{cases}
		1,~x_i \sim x_j,\\
		0,~x_i \not= x_j
	\end{cases}
	$
	for $i,j=1,2,\cdots,5$. Let $d_1=d_2=1$ and $\mu(x)=\sum\limits_{y\in \bar{ \Omega}:y\sim x} \omega_{xy}$.
\end{example}	
	\begin{center}
		\begin{tikzpicture}[>=stealth, scale=1.2]
			
			\draw(0,0)node{\hspace{-.1cm}$\circ$}--(1,0)node{$\bullet$}--(2,0)node{$\bullet$}--(3,0)node{\hspace{.1cm}$\circ$};
			\draw(1,0)--(1.5,0.867)node{$\bullet$}--(2,0);
			\draw(0,0)node[below]{$x_4$}(1,0)node[below]{\hspace{-.1cm}$x_1$}(2,0)node[below]{\hspace{.3cm}$x_3$}(3,0)node[above]{$x_5$} (1.5,0.867)node[above]{$x_2$};
		\end{tikzpicture}
	\end{center}

	Suppose that $(u(x,t),v(x,t))$ is the unique global positive solution of \eqref{20220904-3} with
	$$\Delta_{\Omega_{1}}u(x,t)=\sum_{y\in \bar{\Omega}}[u(y,t)-u(x,t)] \frac{\omega_{xy}}{\mu(x)},$$
	$$\Delta_{\Omega_{2}}v(x,t)=\sum_{y\in \bar{\Omega}}[v(y,t)-v(x,t)] \frac{\omega_{xy}}{\mu(x)},$$
	 $$\frac{\partial u}{\partial_{\Omega_{1}} n}(x,t)=\sum_{y\in \Omega} [u(x,t)-u(y,t)] \frac{\omega_{xy}}{\mu(x)}~\text{on}~\partial\Omega\times[0,+\infty),$$
	 $$\frac{\partial v}{\partial_{\Omega_{2}} n}(x,t)=\sum_{y\in \Omega} [v(x,t)-v(y,t)] \frac{\omega_{xy}}{\mu(x)}~\text{on}~\partial\Omega\times[0,+\infty),$$
	  $u_0(x_1)=7$, $u_0(x_2)=6$, $u_0(x_3)=5$, $v_0(x_1)=4$, $v_0(x_2)=3$, and $v_0(x_3)=2$.

If we choose $a_1=1$, $b_1=c_1=2$, $a_2=1$, $b_2=1$ and $c_2=1$, then $\frac{a_1}{a_2}< \frac{b_1}{b_2}$ , $\frac{a_1}{a_2}< \frac{c_1}{c_2}$, $\frac{a_2}{c_2}=1$. Thus, by Theorem \ref{m} (i), we deduce that 
\begin{equation*}
	\lim\limits_{t\to +\infty} (u(x,t),v(x,t)) = (0,1)~\text{uniformly~for}~x\in \Omega.
\end{equation*}
The numerical experiment result is shown in Figure \ref{fig1}  (a).

If we choose $a_1=2$, $b_1=c_1=1$, $a_2=1$, $b_2=1$ and $c_2=2$, then $\frac{b_1}{b_2}< \frac{a_1}{a_2}$ , $\frac{c_1}{c_2}< \frac{a_1}{a_2}$, $\frac{a_1}{b_1}=2$. Thus, by Theorem \ref{m} (ii), we deduce that 
\begin{equation*}
	\lim\limits_{t\to +\infty} (u(x,t),v(x,t)) = (2,0)~\text{uniformly~for}~x\in \Omega.
\end{equation*}
The numerical experiment result is shown in Figure \ref{fig1}  (b).
\begin{figure*}[!t]
	\centering
	\subfigure[$v$ beats $u$] {\includegraphics[height=2in,width=3in,angle=0]{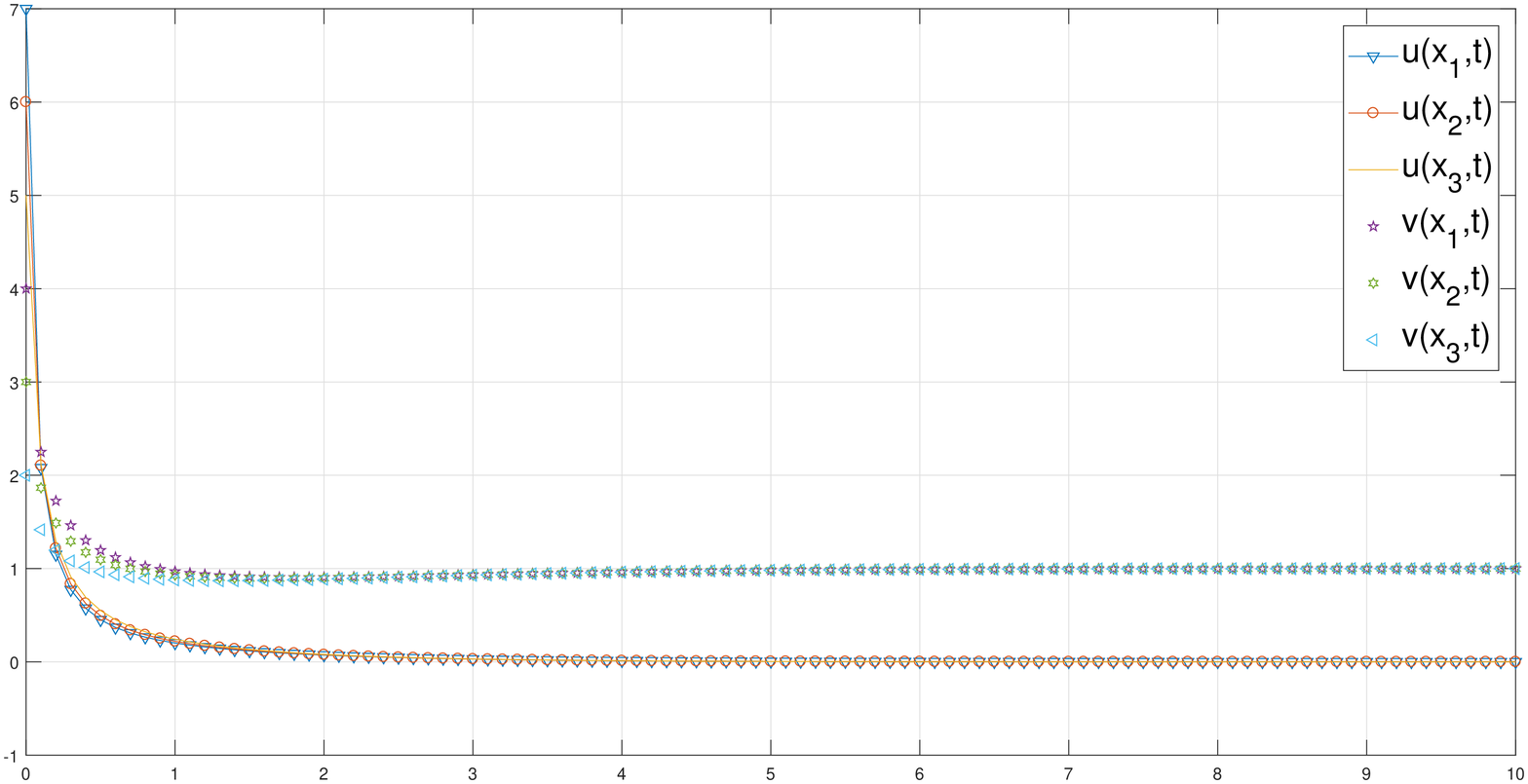}}
	\subfigure[$u$ beats $v$] {\includegraphics[height=2in,width=3in,angle=0]{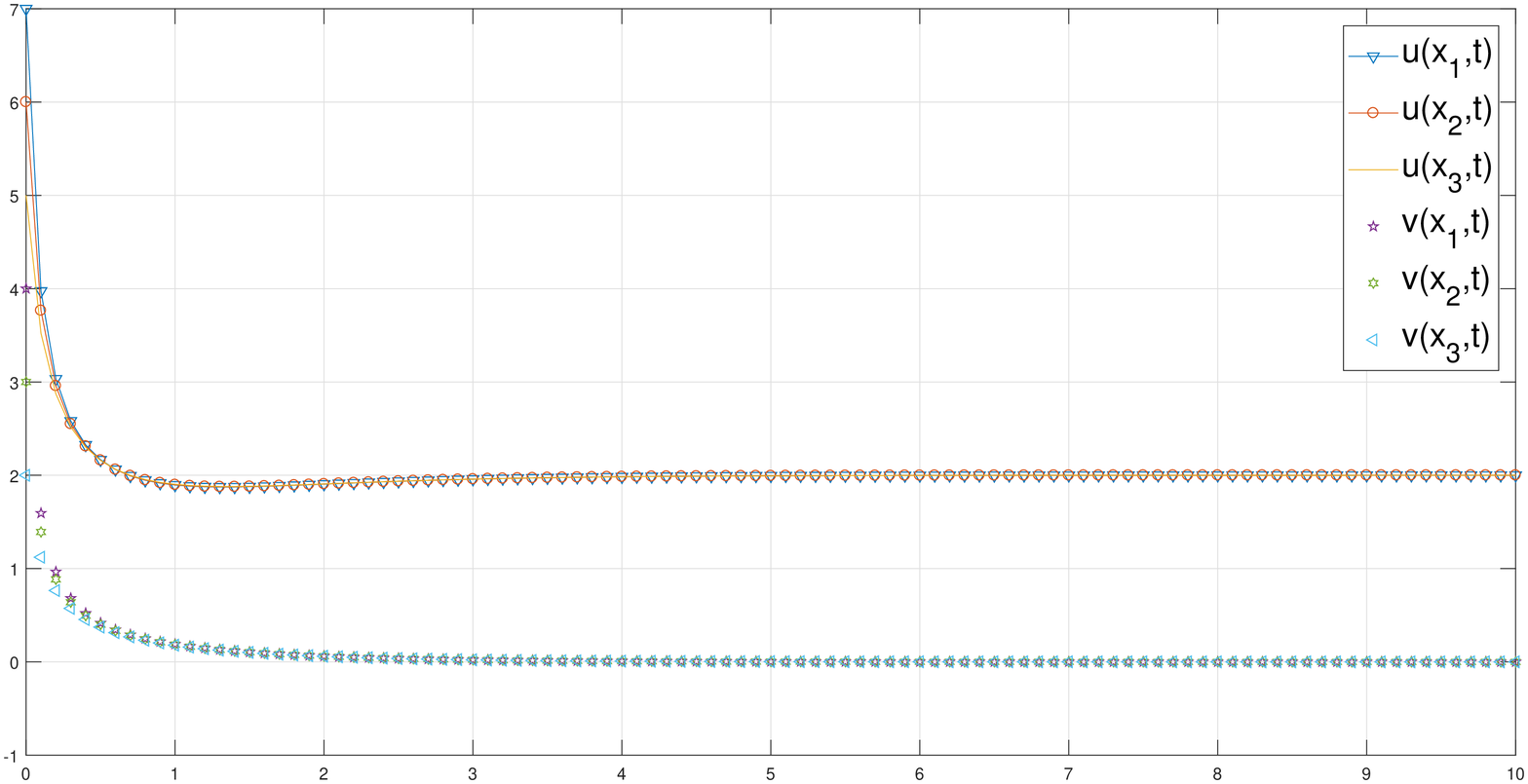}}
	\caption{}
	\label{fig1}
\end{figure*}

If we choose $a_1=2$, $b_1=c_1=1$, $a_2=3$, $b_2=1$ and $c_2=2$, then $\xi=1$, $\eta=1$, $	\frac{c_{1}}{c_{2}}<\frac{a_{1}}{a_{2}}<\frac{b_{1}}{b_{2}}$. Thus, by Theorem \ref{m} (iii), we deduce that 
\begin{equation*}
	\lim\limits_{t\to +\infty} (u(x,t),v(x,t)) = (1,1)~\text{uniformly~for}~x\in \Omega.
\end{equation*}
The numerical experiment result is shown in Figure \ref{fig2}  (a).

\begin{figure*}[!t]
	\centering
	\subfigure[coexistence] {\includegraphics[height=2in,width=3in,angle=0]{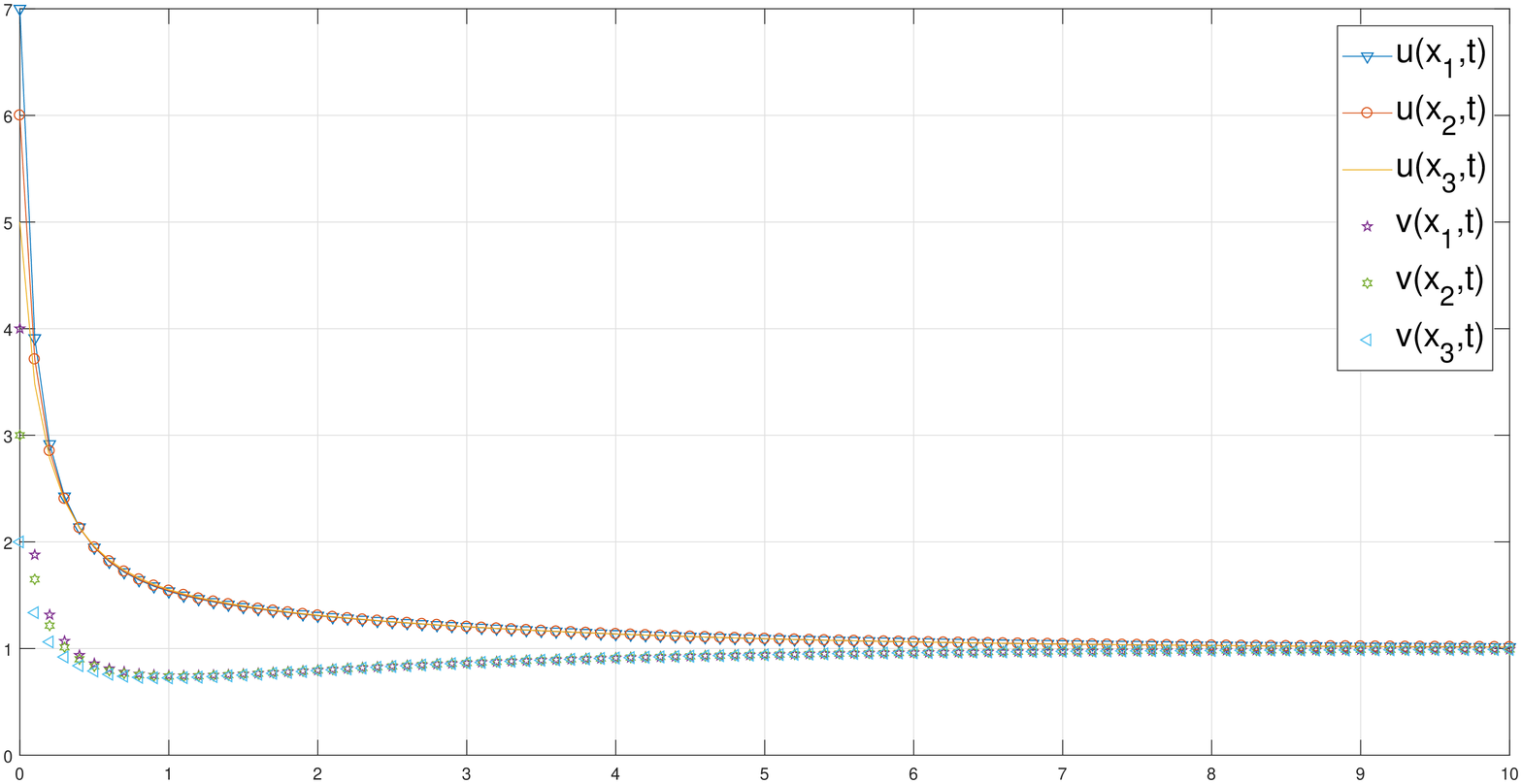}}
	\subfigure[$u$ beats $v$] {\includegraphics[height=2in,width=3in,angle=0]{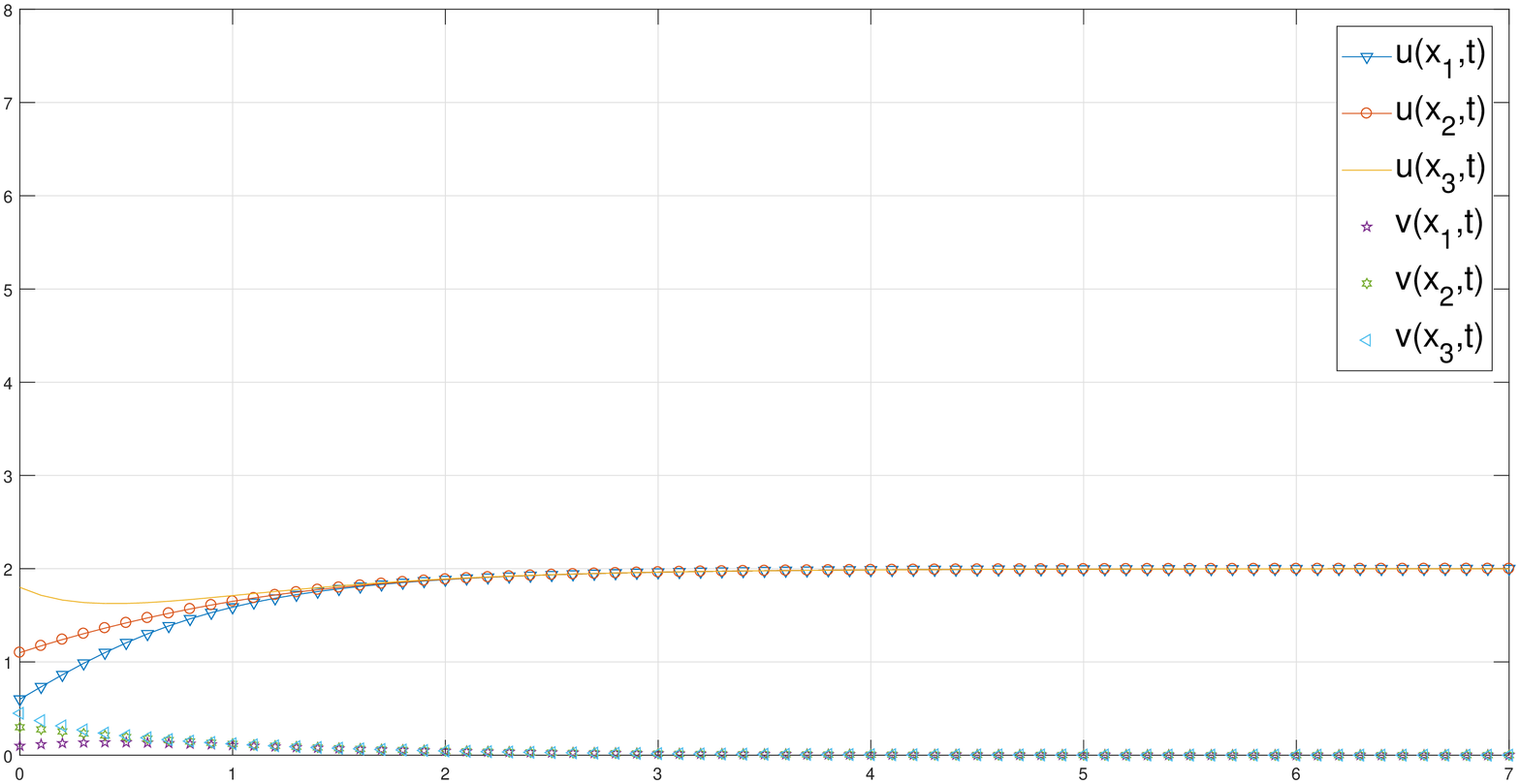}}
	\caption{}
	\label{fig2}
\end{figure*}

Suppose that $(u(x,t),v(x,t))$ is the unique global solution of \eqref{20220904-3} with $a_1=2$, $b_1=1,~c_1=3$, $a_2=1$, $b_2=1$ and $c_2=1$. 

If we choose $u_0(x_1)=0.6$, $u_0(x_2)=1.1$, $u_0(x_3)=1.8$, $v_0(x_1)=0.1$, $v_0(x_2)=0.3$, $v_0(x_3)=0.45$, then $\frac{b_1}{b_2}<\frac{a_1}{a_2}< \frac{c_1}{c_2}$, $0.5=\xi<u_{0}<\frac{a_1}{b_1}=2$, $0<v_{0}<0.5=\eta$ on $\bar{ \Omega}$. Thus, by Theorem \ref{m} (iv), we deduce that \begin{equation*}
	\lim\limits_{t\to +\infty} (u(x,t),v(x,t)) = (2,0)~\text{uniformly~for}~x\in \Omega.
\end{equation*}
The numerical experiment result is shown in Figure \ref{fig2}  (b).
If we choose $u_0(x_1)=0.1$, $u_0(x_2)=0.3$, $u_0(x_3)=0.4$, $v_0(x_1)=0.6$, $v_0(x_2)=0.78$, $v_0(x_3)=0.9$, then $\frac{b_1}{b_2}<\frac{a_1}{a_2}< \frac{c_1}{c_2}$, $0<u_{0}<0.5=\xi$, $0.5=\eta<v_{0}<\frac{a_2}{c_2}=1$ on $\bar{ \Omega}$. Thus, by Theorem \ref{m} (iv), we deduce that \begin{equation*}
	\lim\limits_{t\to +\infty} (u(x,t),v(x,t)) = (0,1)~\text{uniformly~for}~x\in \Omega.
\end{equation*}
The numerical experiment result is shown in Figure \ref{fig3}  (a).

\begin{example}\label{61}
	Choose a graph $G=(V,E)$ satisfying $V=\{x_1 ,x_2, x_3 \}$ whose vertices are linked as the following figure with a weight $\omega$ satisfying 
	$
	\omega_{x_i x_j}=\begin{cases}
		1,~x_i \sim x_j,\\
		0,~x_i \not= x_j
	\end{cases}
	$
	for $i,j=1,2,3$. Let $d_1=d_2=1$ and $\mu(x)=\sum\limits_{y\in V:y\sim x} \omega_{xy}$.
	\begin{center}
		\begin{tikzpicture}[>=stealth, scale=1.2]		
			\draw(1,0)node{$\bullet$}--(2,0)node{$\bullet$};
			\draw(1,0)--(1.5,0.867)node{$\bullet$}--(2,0);
			\draw(1,0)node[below]{\hspace{-.1cm}$x_1$}(2,0)node[below]{\hspace{.3cm}$x_3$} (1.5,0.867)node[above]{$x_2$};
		\end{tikzpicture}
	\end{center}

\end{example}	
	
	Suppose that $(u(x,t),v(x,t))$ is the unique global positive solution of \eqref{41} with 
	$$\Delta_{V_{1}}u(x,t)=\sum_{y\in V}[u(y,t)-u(x,t)] \frac{\omega_{xy}}{\mu(x)},$$
	$$\Delta_{V_{2}}v(x,t)=\sum_{y\in V}[v(y,t)-v(x,t)] \frac{\omega_{xy}}{\mu(x)},$$
	 $u_0(x_1)=7$, $u_0(x_2)=6$, $u_0(x_3)=5$, $v_0(x_1)=4$, $v_0(x_2)=3$, and $v_0(x_3)=2$.

If we choose $a_1=1$, $b_1=c_1=2$, $a_2=1$, $b_2=1$ and $c_2=1$, then $\frac{a_1}{a_2}< \frac{b_1}{b_2}$ , $\frac{a_1}{a_2}< \frac{c_1}{c_2}$, $\frac{a_2}{c_2}=1$. Thus, by Theorem \ref{m1} (i), we deduce that 
\begin{equation*}
	\lim\limits_{t\to +\infty} (u(x,t),v(x,t)) = (0,1)~\text{uniformly~for}~x\in V.
\end{equation*}
The numerical experiment result is shown in Figure \ref{fig3}  (b).
\begin{figure*}[!t]
	\centering
	\subfigure[$v$ beats $u$] {\includegraphics[height=2in,width=3in,angle=0]{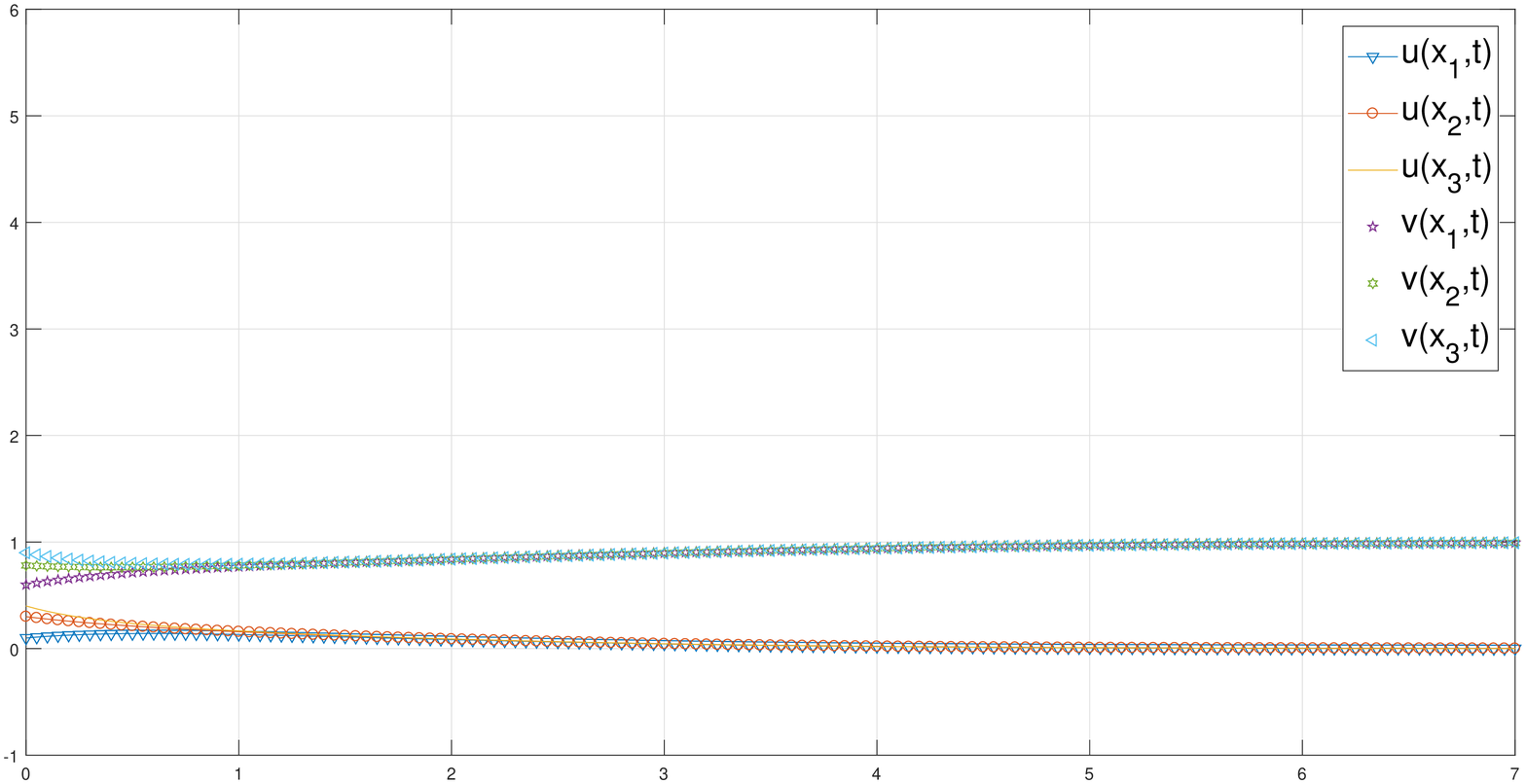}}
	\subfigure[$u$ beats $v$] {\includegraphics[height=2in,width=3in,angle=0]{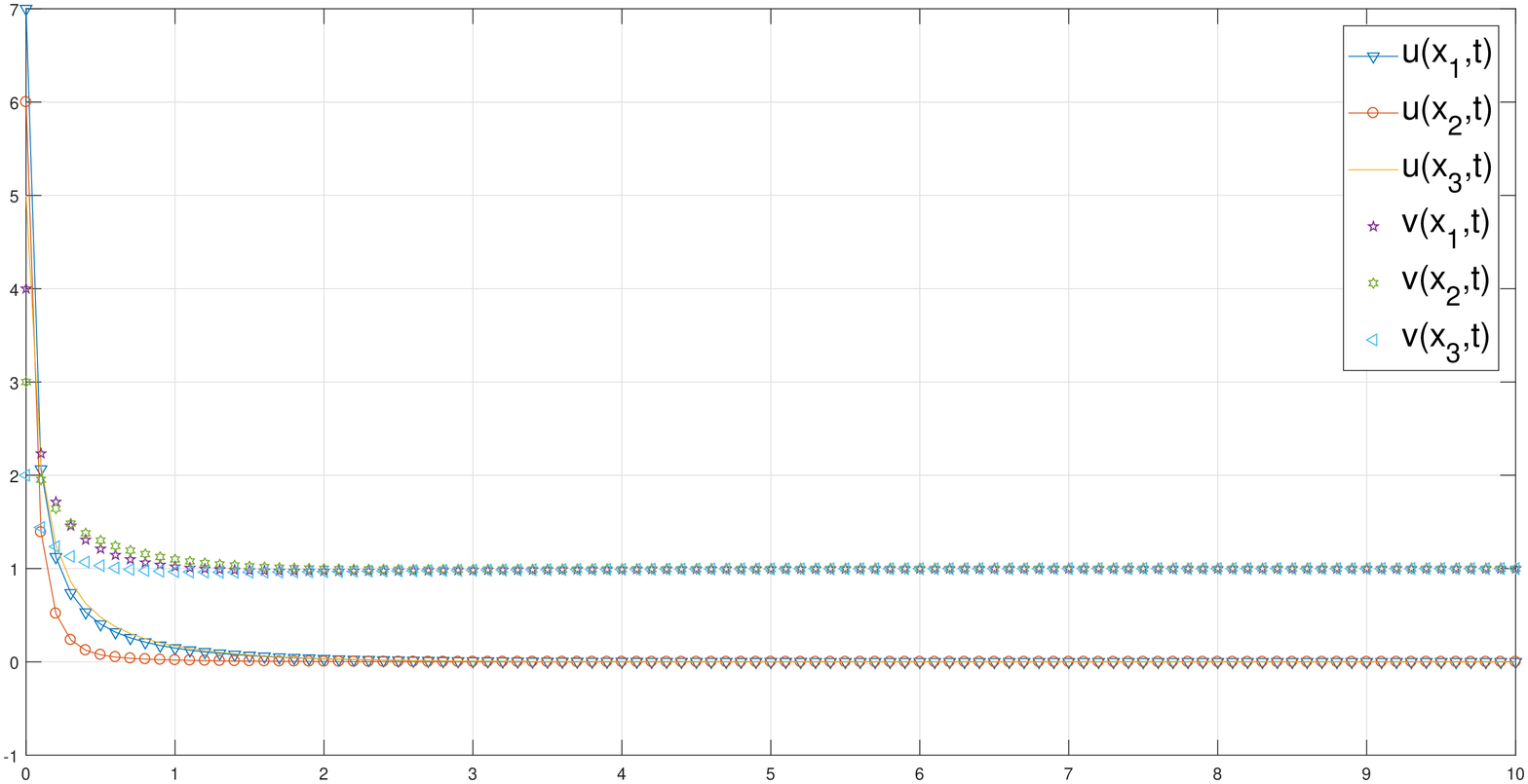}}
	\caption{}
	\label{fig3}
\end{figure*}

If we choose $a_1=2$, $b_1=c_1=1$, $a_2=1$, $b_2=1$ and $c_2=2$, then $\frac{b_1}{b_2}< \frac{a_1}{a_2}$ , $\frac{c_1}{c_2}< \frac{a_1}{a_2}$, $\frac{a_1}{b_1}=2$. Thus, by Theorem \ref{m1} (ii), we deduce that 
\begin{equation*}
	\lim\limits_{t\to +\infty} (u(x,t),v(x,t)) = (2,0)~\text{uniformly~for}~x\in V.
\end{equation*}
The numerical experiment result is shown in Figure \ref{fig4}  (a).

If we choose $a_1=2$, $b_1=c_1=1$, $a_2=3$, $b_2=1$ and $c_2=2$, then $\xi=1$, $\eta=1$, $	\frac{c_{1}}{c_{2}}<\frac{a_{1}}{a_{2}}<\frac{b_{1}}{b_{2}}$. Thus, by Theorem \ref{m1} (iii), we deduce that 
\begin{equation*}
	\lim\limits_{t\to +\infty} (u(x,t),v(x,t)) = (1,1)~\text{uniformly~for}~x\in V.
\end{equation*}
The numerical experiment result is shown in Figure \ref{fig4}  (b).

\begin{figure*}[!t]
	\centering
	\subfigure[$u$ beats $v$] {\includegraphics[height=2in,width=3in,angle=0]{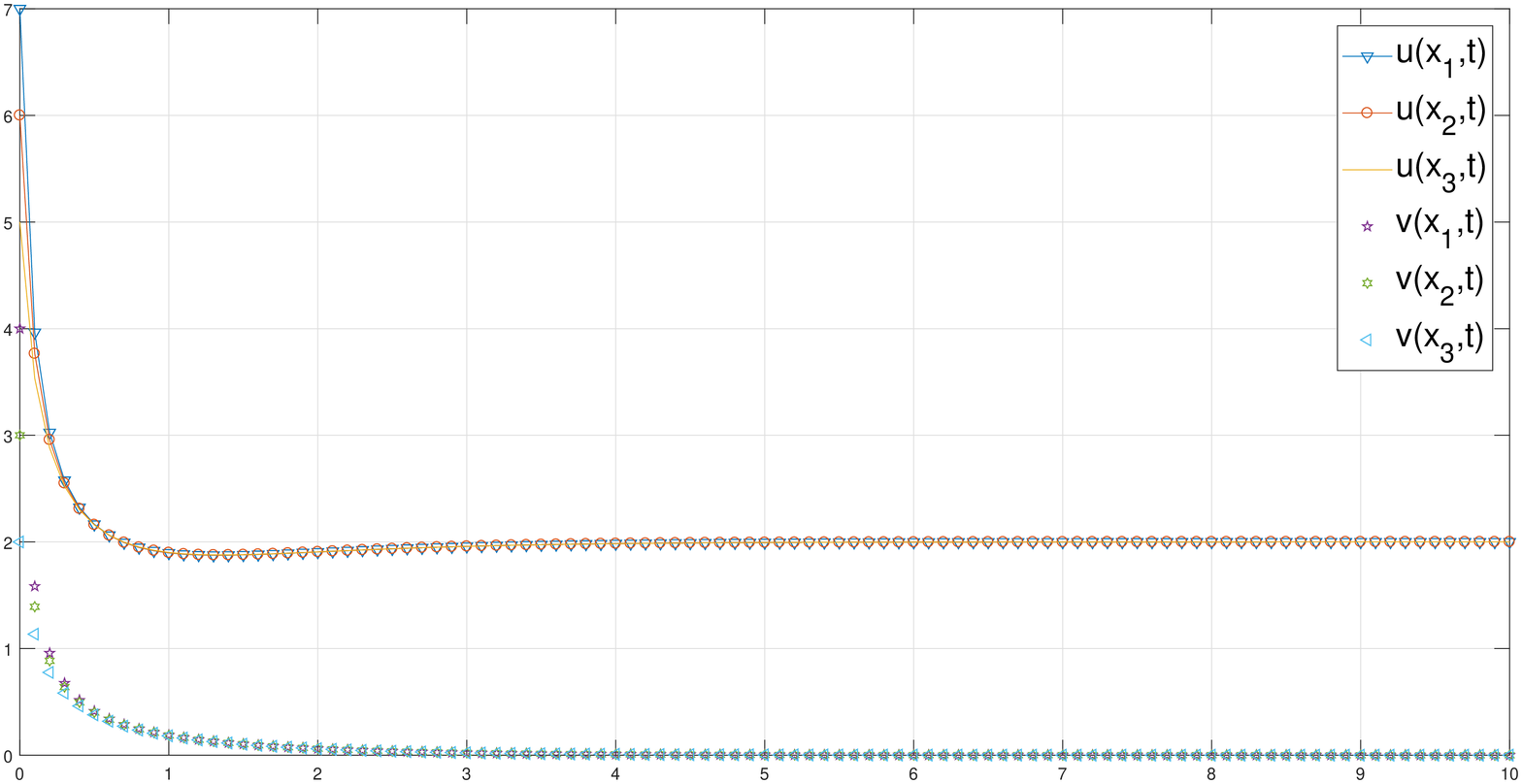}}
	\subfigure[coexistence] {\includegraphics[height=2in,width=3in,angle=0]{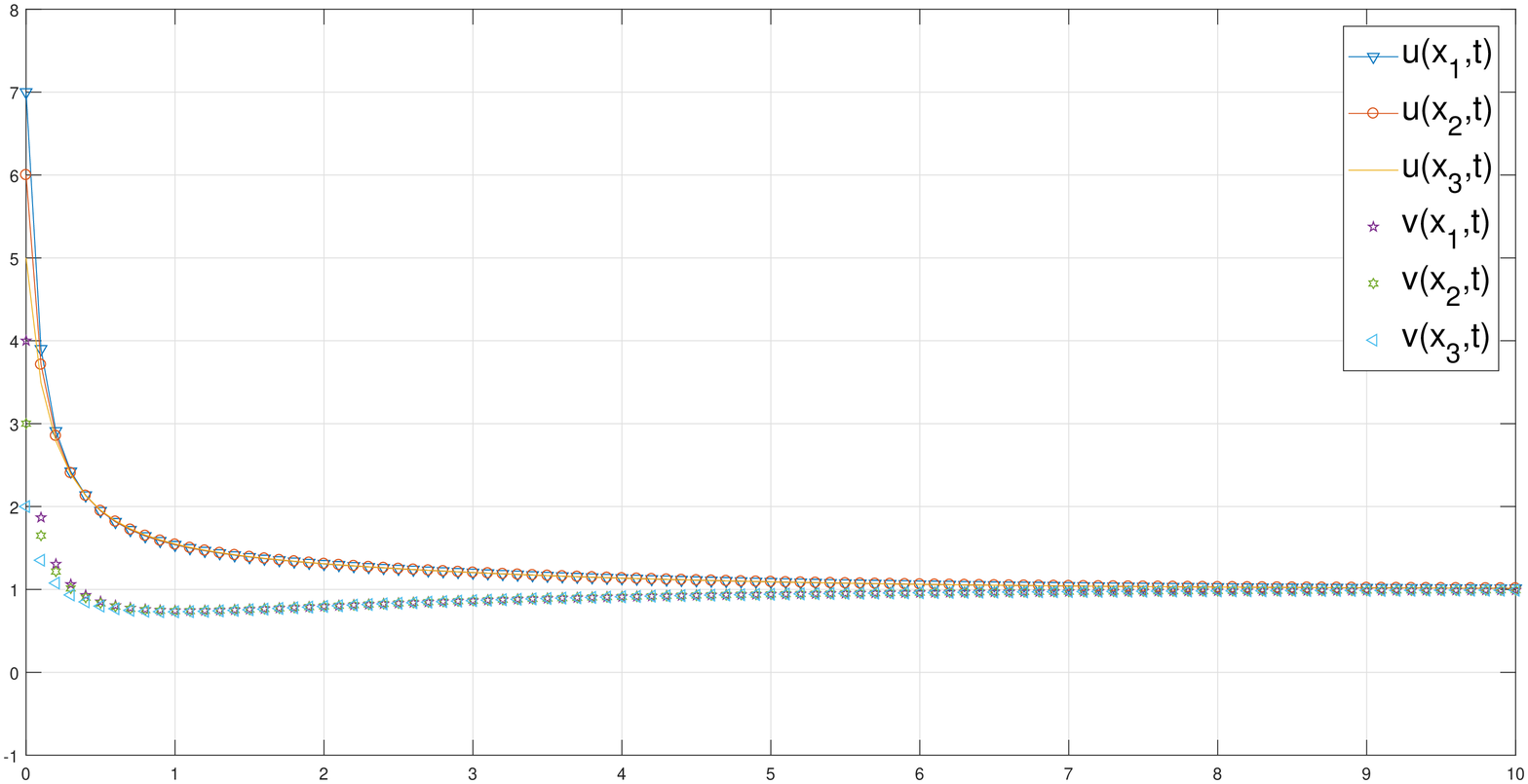}}
	\caption{}
	\label{fig4}
\end{figure*}
\begin{figure*}[!t]
	\centering
	\subfigure[$u$ beats $v$] {\includegraphics[height=2in,width=3in,angle=0]{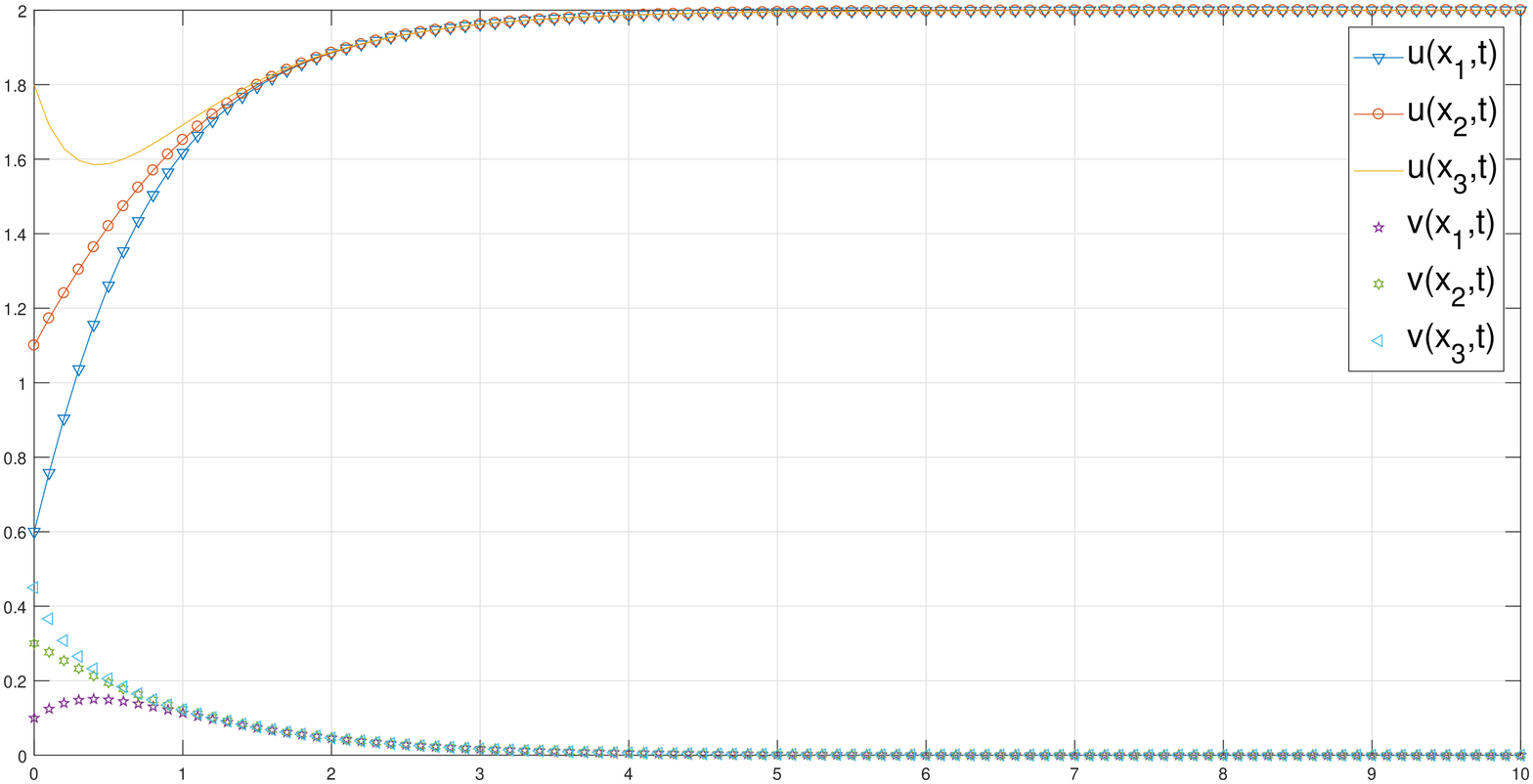}}
	\subfigure[$u$ beats $v$] {\includegraphics[height=2in,width=3in,angle=0]{th24.eps}}
	\caption{}
	\label{fig5}
\end{figure*}

Suppose that $(u(x,t),v(x,t))$ is the unique global positive solution of \eqref{20220904-3} with $a_1=2$, $b_1=1,~c_1=3$, $a_2=1$, $b_2=1$ and $c_2=1$. 

If we choose $u_0(x_1)=0.6$, $u_0(x_2)=1.1$, $u_0(x_3)=1.8$, $v_0(x_1)=0.1$, $v_0(x_2)=0.3$, $v_0(x_3)=0.45$, then $\frac{b_1}{b_2}<\frac{a_1}{a_2}< \frac{c_1}{c_2}$, $0.5=\xi<u_{0}<\frac{a_1}{b_1}=2$, $0<v_{0}<0.5=\eta$ on $V$. Thus, by Theorem \ref{m1} (iv), we deduce that \begin{equation*}
	\lim\limits_{t\to +\infty} (u(x,t),v(x,t)) = (2,0)~\text{uniformly~for}~x\in V.
\end{equation*}
The numerical experiment result is shown in Figure \ref{fig5}  (a).
If we choose $u_0(x_1)=0.1$, $u_0(x_2)=0.3$, $u_0(x_3)=0.4$, $v_0(x_1)=0.6$, $v_0(x_2)=0.78$, $v_0(x_3)=0.9$, then $\frac{b_1}{b_2}<\frac{a_1}{a_2}< \frac{c_1}{c_2}$, $0<u_{0}<0.5=\xi$, $0.5=\eta<v_{0}<\frac{a_2}{c_2}=1$ on $V$. Thus, by Theorem \ref{m1} (iv), we deduce that \begin{equation*}
	\lim\limits_{t\to +\infty} (u(x,t),v(x,t)) = (0,1)~\text{uniformly~for}~x\in V.
\end{equation*}
The numerical experiment result is shown in Figure \ref{fig5}  (b).

\end{document}